\newcommand\NN{\mathbb{N}}
\newcommand\RR{\mathbb{R}}
\newcommand\HH{\mathcal{H}}
\newcommand\II{\mathcal{I}}
\newcommand\KK{\mathcal{K}}
\newcommand\eps{{\varepsilon}}
\renewcommand{\mod}{\operatorname{mod}}
\newtheorem{theorem}{Theorem}[section]
\newtheorem{corollary}[theorem]{Corollary}
\newtheorem{example}[theorem]{Example}
\newtheorem{lemma}[theorem]{Lemma}
\newtheorem{remark}[theorem]{Remark}
\newtheorem{proposition}[theorem]{Proposition}
\newenvironment{customex}[1]
  {\innercustomthm}
  {\endinnercustomthm}
\numberwithin{equation}{section}
\newcommand{\address}{Address: Department of Mathematics, University of North Texas, 1155 Union Circle \#311430, Denton, TX 76203-5017, USA; E-mail: allaart@unt.edu}
\title{Differentiability and H\"older spectra of a class of self-affine functions}
\author{Pieter C. Allaart \footnote{\address}}
\begin{document}

\maketitle

\begin{abstract}
This paper studies a large class of continuous functions $f:[0,1]\to\RR^d$ whose range is the attractor of an iterated function system $\{S_1,\dots,S_{m}\}$ consisting of similitudes. This class includes such classical examples as P\'olya's space-filling curves, the Riesz-Nagy singular functions and Okamoto's functions. The differentiability of $f$ is completely classified in terms of the contraction ratios of the maps $S_1,\dots,S_{m}$. Generalizing results of Lax (1973) and Okamoto (2006), it is shown that either (i) $f$ is nowhere differentiable; (ii) $f$ is non-differentiable almost everywhere but with uncountably many exceptions; or (iii) $f$ is differentiable almost everywhere but with uncountably many exceptions. The Hausdorff dimension of the exceptional sets in cases (ii) and (iii) above is calculated, and more generally, the complete multifractal spectrum of $f$ is determined.

\bigskip
{\it AMS 2010 subject classification}: 26A27, 26A16 (primary); 28A78, 26A30 (secondary)

\bigskip
{\it Key words and phrases}: Continuous nowhere differentiable function; Self-affine function; Space-filling curve; Pointwise H\"older spectrum;  Multifractal formalism; Hausdorff dimension.
\end{abstract}

\section{Introduction}

In 1973, P. Lax \cite{Lax} proved a remarkable theorem about the differentiability of P\'olya's space-filling curve, which maps a closed interval continuously onto a solid right triangle. Unlike the space-filling curves of Peano and Hilbert, which had been known to be nowhere differentiable, Lax found that the differentiability of the P\'olya curve depends on the value of the smallest acute angle $\theta$ of the triangle. (Roughly speaking, the larger the angle, the less differentiable the function is; see Example \ref{ex:Polya-ctd} below.) 

More than 30 years later, H. Okamoto \cite{Okamoto} introduced a one-parameter family of self-affine functions that includes the Cantor function as well as functions previously studied by Perkins \cite{Perkins} and Katsuura \cite{Katsuura}. Okamoto showed that the differentiability of his functions depends on the parameter $a\in(0,1)$ in much the same way as the differentiability of the P\'olya curve depends on the angle $\theta$ (though it is not clear whether Okamoto was aware of Lax's result). See Example \ref{ex:Okamoto-ctd} below.

While Okamoto's function and the P\'olya curve are not directly related, both can be viewed as special cases of a large class of self-affine functions. The aim of this article is to study the differentiability of this class of functions, thereby generalizing the results of Lax and Okamoto, and to determine their finer local regularity behavior in the form of the pointwise H\"older spectrum.

Our class of functions is a subclass of that considered in \cite{BKK} and may be described as follows. Fix $d\in\NN$, an integer $m\geq 2$, and points $\mathbf{a},\mathbf{b}\in\RR^d$ with $|\mathbf{a}-\mathbf{b}|=1$. (Without loss of generality we take $\mathbf{a}=(0,0,\dots,0)$ and $\mathbf{b}=(1,0,\dots,0)$.) Fix a vector $\boldsymbol\eps=(\eps_1,\dots,\eps_m)\in\{0,1\}^m$. Let $S_1,\dots,S_{m}$ be contractive similitudes in $\RR^d$ satisfying the ``connectivity conditions"
\begin{gather}
S_1\big((1-\eps_1)\mathbf{a}+\eps_1\mathbf{b}\big)=\mathbf{a}, \label{eq:first-fixed-point} \\
S_m\big(\eps_m\mathbf{a}+(1-\eps_m)\mathbf{b}\big)=\mathbf{b}, \label{eq:last-fixed-point} \\
S_{i-1}\big(\eps_{i-1}\mathbf{a}+(1-\eps_{i-1})\mathbf{b}\big)=S_i((1-\eps_i)\mathbf{a}+\eps_i\mathbf{b}\big), \qquad i=2,\dots,m. \label{eq:connecting-pieces}
\end{gather}
Put $\lambda_i:=\mathrm{Lip}(S_i)$. If $m\geq 3$, we allow one or more of the $S_i$ to be constant, so $\lambda_i=0$.

Let $c_1,\dots,c_m$ be positive numbers with $\sum_{i=1}^m c_i=1$. Put $\sigma_i:=\sum_{j=1}^{i-1}c_j+\eps_i c_i$ for $i,\dots,m$, and define the maps
\begin{equation*}
\phi_i(t):=(-1)^{\eps_i}c_i t+\sigma_i, \qquad i=1,\dots,m,
\end{equation*}
so $\phi_i$ maps $[0,1]$ linearly onto a closed interval $I_i$ of length $c_i$, and the intervals $I_1,\dots,I_m$ are nonoverlapping with $\bigcup_{i=1}^m I_i=[0,1]$. By a theorem of de Rham \cite{deRham}, there exists a unique continuous function $f:[0,1]\to\RR^d$ satisfying the functional equation
\begin{equation}
f(t)=S_i\big(f(\phi_i^{-1}(t)\big), \qquad t\in I_i , \qquad i=1,\dots,m.
\label{eq:our-self-affine-functions-intro}
\end{equation}
Following \cite{BKK}, we shall call $\boldsymbol\eps$ the {\em signature} of $f$.
The image $\Gamma:=f([0,1])$ is a connected, self-similar curve in $\RR^d$ satisfying $\Gamma=\bigcup_{i=1}^m S_i(\Gamma)$. Note that \eqref{eq:first-fixed-point}-\eqref{eq:connecting-pieces} imply that $\sum_{i=1}^{m}\lambda_i\geq 1$. To avoid degenerate cases, we shall assume throughout that
\begin{equation}
(\lambda_1,\dots,\lambda_{m})\neq (c_1,\dots,c_m).
\label{eq:lambda-assumption}
\end{equation}

\begin{remark}
{\rm
Let $\KK_\phi$ be the unique nonempty compact subset of $[0,1]$ such that
\begin{equation}
\KK_\phi=\bigcup_{i:\lambda_i>0}\phi_i(\KK_\phi).
\label{eq:set-equation}
\end{equation}
Since $f$ is constant on each of the intervals making up the complement of $\KK_\phi$, we can think of $f$ alternatively as a continuous function from the self-similar set $\KK_\phi$ in $[0,1]$ onto the self-similar set $\Gamma$ in $\RR^d$.
}
\end{remark}

In each of the examples below, we take $(c_1,\dots,c_m)=(1/m,\dots,1/m)$ and $\boldsymbol\eps=(0,0,\dots,0)$, unless otherwise specified.

\begin{example}[The P\'olya curve] \label{ex:Polya}
{\rm
Take $d=m=2$. Let $\Delta$ be a right triangle positioned as in Figure \ref{fig:triangle}. We assume that $\theta$, the smaller of the two acute angles of $\Delta$, is the angle at $(1,0)$. The two subtriangles $\Delta_1$ and $\Delta_2$ in the figure are similar to $\Delta$; let $S_1$ and $S_2$ be the affine transformations which map $\Delta$ onto $\Delta_1$ and $\Delta_2$, respectively. The function $f$ determined by \eqref{eq:our-self-affine-functions-intro} in this case is P\'olya's space-filling curve \cite{Polya}, which maps the interval $[0,1]$ onto the triangle $\Delta$, and $(\lambda_1,\lambda_2)=(\sin\theta,\cos\theta)$.
}
\end{example}

\begin{figure}[h]
\begin{center}
\begin{picture}(100,65)(0,-15)
    \put(0,0){\line(2,3){37}}
    \put(37,56){\line(3,-2){85}}
    \put(37,56){\line(0,-1){56}}
    \put(0,0){\line(1,0){122}}
    \put(18,18){\makebox(0,0)[tl]{$\Delta_1$}}
    \put(60,18){\makebox(0,0)[tl]{$\Delta_2$}}
    \put(80,50){\makebox(0,0)[tl]{$\Delta$}}
		\qbezier(108,0)(108,4)(110,7)
		\put(95,11){\makebox(0,0)[tl]{$\theta$}}
		\put(-13,-5){\makebox(0,0)[tl]{$(0,0)$}}
		\put(110,-5){\makebox(0,0)[tl]{$(1,0)$}}
    \end{picture}
\caption{\footnotesize The right triangle $\Delta$, and its similar subtriangles $\Delta_1$ and $\Delta_2$.}
\label{fig:triangle}
\end{center}
\end{figure}
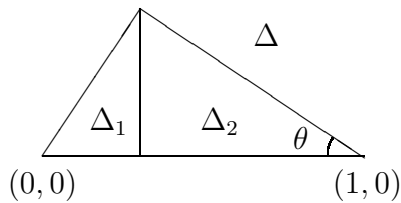

\begin{example}[Okamoto's functions] \label{ex:Okamoto}
{\rm
Take $d=1$ and $m=3$. Fix a parameter $a\in(0,1)$, and set $S_1(x)=ax$, $S_2(x)=a+(1-2a)x$ and $S_3(x)=ax+1-a$. Then $f$ determined by \eqref{eq:our-self-affine-functions-intro} is Okamoto's function \cite{Okamoto}, shown in Figure \ref{fig:construction}. Note that $a=1/2$ gives the Cantor function. The special cases $a=5/6$ and $a=2/3$ had been considered previously by Perkins \cite{Perkins} and Katsuura \cite{Katsuura}, respectively.
}
\end{example}

\begin{figure}[h]
\begin{center}
\begin{picture}(160,150)(0,5)
\put(30,20){\line(1,0){126}}
\put(30,20){\line(0,1){126}}
\put(30,146){\line(1,0){126}}
\put(156,20){\line(0,1){126}}
\put(27,15){\makebox(0,0)[tl]{$0$}}
\put(72,18){\line(0,1){4}}
\put(62,15){\makebox(0,0)[tl]{$1/3$}}
\put(114,18){\line(0,1){4}}
\put(104,15){\makebox(0,0)[tl]{$2/3$}}
\put(154,15){\makebox(0,0)[tl]{$1$}}
\put(17,25){\makebox(0,0)[tl]{$0$}}
\put(28,104){\line(1,0){4}}
\put(16,107){\makebox(0,0)[tl]{$a$}}
\put(28,62){\line(1,0){4}}
\put(-2,67){\makebox(0,0)[tl]{$1-a$}}
\put(19,150){\makebox(0,0)[tl]{$1$}}
\put(30,20){\line(1,2){42}}
\put(72,104){\line(1,-1){42}}
\put(114,62){\line(1,2){42}}
\thicklines
\dottedline{4}(30,20)(156,146)
\thinlines
\end{picture}
\qquad
\epsfig{file=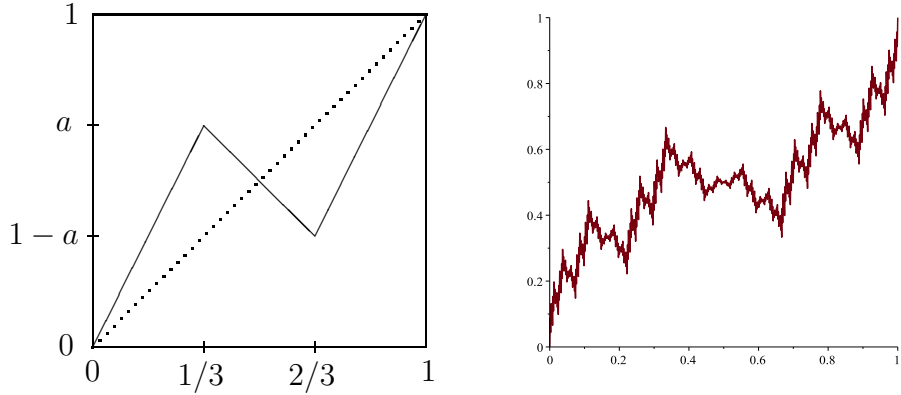, height=.25\textheight, width=.35\textwidth}
\end{center}
\caption{The generating pattern and graph of Okamoto's function, for $a=2/3$.}
\label{fig:construction}
\end{figure}


\begin{example}[The Riesz-Nagy function] \label{ex:Riesz-Nagy}
{\rm
Take $d=1$ and $m=2$, and fix a parameter $a\in(0,1)$, $a\neq 1/2$. Setting $S_1(x)=ax$ and $S_2(x)=a+(1-a)x$, we obtain the Riesz-Nagy function \cite{Riesz-Nagy,Salem}, one of the best known examples of a strictly increasing function whose derivative is almost everywhere zero; see Figure \ref{fig:Riesz-Nagy function}(a). 
In Section \ref{sec:CMT}, the Riesz-Nagy functions will serve as time subordinators for other functions of the form \eqref{eq:our-self-affine-functions-intro} with $m=2$.
}
\end{example}

\begin{figure}
\begin{center}
\vspace{-.2\textheight}
(a)\epsfig{bbllx=0,bblly=630,bburx=550,bbury=1230,file=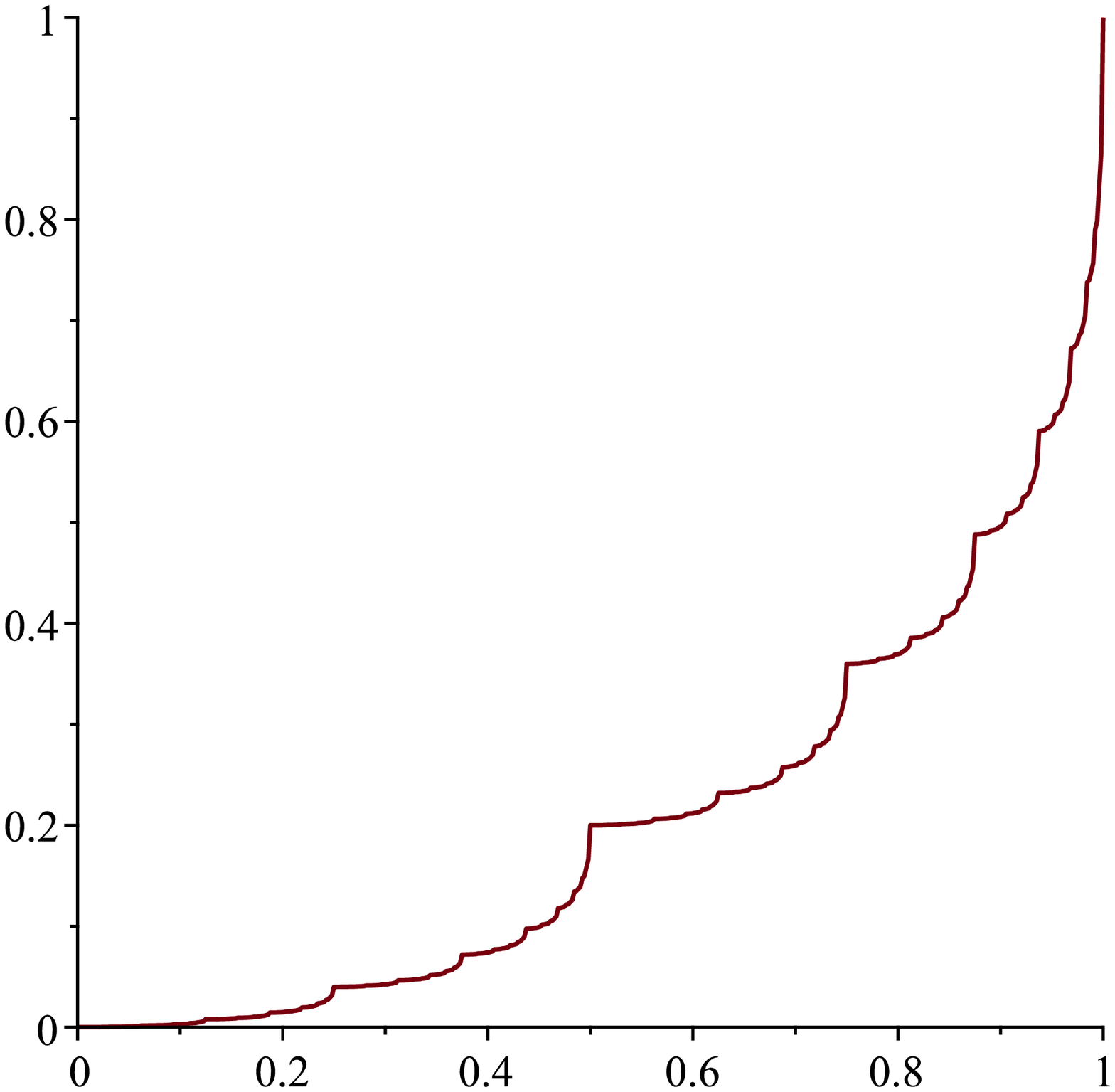, height=.25\textheight, width=.35\textwidth} \qquad\qquad
(b)\epsfig{bbllx=0,bblly=630,bburx=550,bbury=1230,file=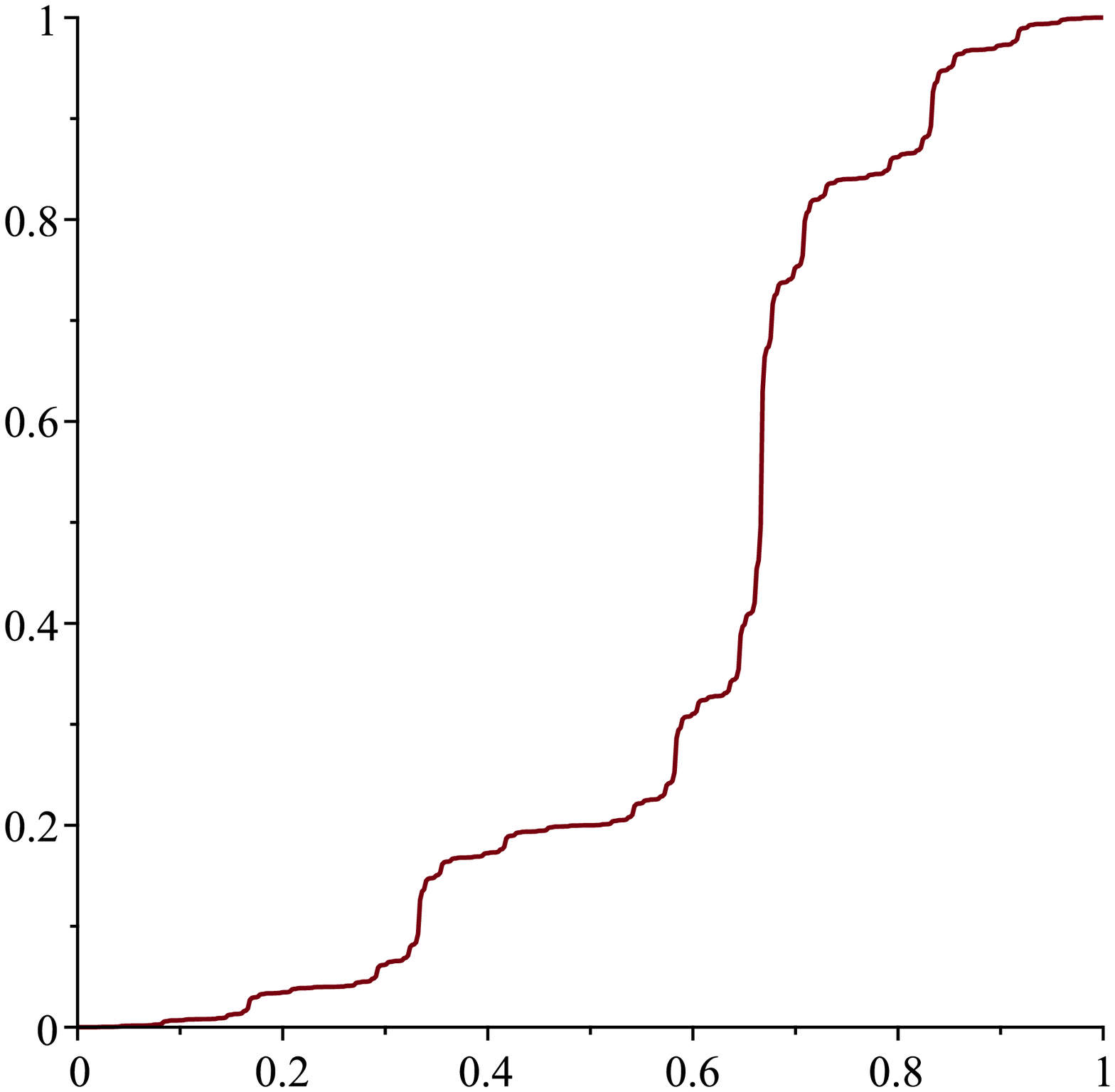, height=.25\textheight, width=.35\textwidth}
\end{center}
\vspace{.19\textheight}
\caption{The Riesz-Nagy function (left) and Gray code singular function (right).}
\label{fig:Riesz-Nagy function}
\end{figure}

\begin{example}[Gray code singular function] \label{ex:Gray}
{\rm
Take again $d=1$ and $m=2$. Let $c_1=c_2=1/2$, $\eps_1=0$ and $\eps_2=1$ (so $\phi_1(t)=t/2$ and $\phi_2(t)=1-t/2$). Fix $a\in(0,1)$, $a\neq 1/2$, and put $S_1(x)=ax$ and $S_2(x)=1-(1-a)x$. The function $f$ obtained this way (see Figure \ref{fig:Riesz-Nagy function}(b)) is the {\em Gray code singular function}, introduced in \cite{Kobayashi-Z} and so called because of its connection with the Gray code representation of real numbers.
}
\end{example}

\begin{example}[Distribution functions of self-similar measures] \label{ex:self-similar-measure}
{\rm
Generalizing the last two examples, let $J_1,\dots,J_k$ be nonoverlapping closed subintervals of $[0,1]$, ordered so that $J_i$ lies to the left of $J_j$ when $i<j$. For $1\leq j\leq k$, let $\psi_j$ be one of the two linear contractions which map $[0,1]$ onto $J_j$, and set $r_j:=\mathrm{Lip}(\psi_j)$. Let $(\pi_1,\dots,\pi_k)$ be a probability vector with $\pi_j>0$ for each $j$. There is then a unique nonempty compact set $F\subset[0,1]$ such that $F=\psi_1(F)\cup\dots\cup\psi_k(F)$, and there is a unique probability measure $\mu$ concentrated on $F$ such that
\begin{equation}
\mu=\sum_{j=1}^k \pi_j \mu\circ\psi_j^{-1}.
\end{equation}
Let $f(t):=\mu([0,t])$ for $t\in[0,1]$; then $f$ is of the form \eqref{eq:our-self-affine-functions-intro}. To determine the parameters, write $J_j=[s_j,t_j]$, $j=1,\dots,k$, and set $t_0:=0$ and $s_{k+1}:=1$. Let $k':=\#\{j\in\{1,\dots,k+1\}:t_{j-1}<s_j\}$. The points $s_1,t_1,\dots,s_k,t_k$ (not all necessarily distinct) divide $[0,1]$ into $m:=k+k'$ nonoverlapping closed subintervals; let us label them $I_1,\dots,I_m$ from left to right. For $i=1,\dots,m$, set $\lambda_i:=\pi_j$ if $I_i=J_j$ for some $j$, and else set $\lambda_i=0$. Set $c_i:=|I_i|$, so $c_i=r_j$ if $I_i=J_j$ for some $j$, and otherwise $c_i$ is the length of a ``gap" between two successive intervals $J_{j-1}$ and $J_j$. Note that this naturally yields examples of our set-up with some of the $\lambda_i$ equal to zero. In this case, $\sum_{i=1}^m\lambda_i=\sum_{j=1}^k\pi_j=1$, and $\KK_\phi=F$.
}
\end{example}

Other examples of functions satisfying \eqref{eq:our-self-affine-functions-intro} include the space-filling curves of Peano ($m=9$) and Hilbert ($m=4$), and classical fractals such as the Koch curve and the L\'evy curve \cite{Levy}, as well as asymmetric versions of these. However, as most of these functions are nowhere differentiable and monofractal, they are less interesting from the point of view of this article. For a comprehensive survey of space-filling curves, see \cite{Sagan}.

Section \ref{sec:results} outlines the main results of the paper, illustrating them for some of the above examples. Surprisingly, the differentiability of $f$ depends on the maps $S_1,\dots,S_{m}$ only through their contraction ratios $\lambda_1,\dots,\lambda_{m}$. This means that, especially when $d\geq 2$, there are many different functions in our class with the same differentiability structure, and even with the same pointwise H\"older spectrum.

We show first that the only possible finite derivative of a function $f$ of the form \eqref{eq:our-self-affine-functions-intro} under the assumption \eqref{eq:lambda-assumption} is zero. We then generalize Lax's and Okamoto's theorems by showing that, depending on the values of $\lambda_1,\dots,\lambda_{m}$ and $c_1,\dots,c_m$, $f$ is either (i) nowhere differentiable; (ii) differentiable almost nowhere, with uncountably many exceptions; or (iii) differentiable almost everywhere, with uncountably many exceptions. In cases (ii) and (iii), we compute the Hausdorff dimension of the exceptional sets. For example, in the case of P\'olya's space-filling curve we obtain that, if $15^\circ\leq\theta<30^\circ$, the set of points where $f$ is differentiable has Hausdorff dimension $-p\log_2 p-(1-p)\log_2(1-p)$, where
\begin{equation*}
p=\frac{\log(2\cos\theta)}{\log(\cot\theta)}.
\end{equation*}

A large part of the paper is devoted to the pointwise H\"older spectrum -- or multifractal spectrum -- of $f$. Globally, a function $f$ is said to be H\"older continuous with exponent $\alpha>0$ if there is a constant $C$ such that $|f(x)-f(y)|\leq C|x-y|^\alpha$ for all $x$ and $y$. 
However, this $\alpha$ represents the ``worst possible" behavior, and in general, a continuous function can at many points have substantially better regularity than the worst case. 

Consider first the case of a function $f:[0,1]\to\RR$. For $\alpha>0$ and $t_0\in(0,1)$, write $f\in C^\alpha(t_0)$ if there is a constant $C$ and a polynomial $P$ of degree less than $\alpha$ such that
\begin{equation}
|f(t)-P(t-t_0)|\leq C|t-t_0|^\alpha \qquad\mbox{for all $t\in(0,1)$}.
\label{eq:Holder-def}
\end{equation}
The {\em pointwise H\"older exponent} of $f$ at $t$ is the number
\begin{equation}
\alpha_f(t):=\sup\{\alpha>0: f\in C^\alpha(t)\}, \qquad t\in(0,1),
\label{eq:Holder-exponent-definition}
\end{equation}
and the {\em pointwise H\"older spectrum} of $f$ is the function $\alpha \mapsto \dim_H E_f(\alpha)$, where
\begin{equation*}
E_f(\alpha):=\{t\in(0,1):\alpha_f(t)=\alpha\}, \qquad \alpha>0,
\end{equation*}
and $\dim_H$ denotes Hausdorff dimension.
For a function $f:[0,1]\to\RR^d$, one replaces the polynomial $P$ in \eqref{eq:Holder-def} by a $d$-tuple $P=(P_1,\dots,P_d)$ of polynomials, each of degree less than $\alpha$.

Local H\"older exponents can be difficult to calculate, especially for $\alpha>2$, where, in order to show that $f\not\in C^\alpha(t_0)$, one must prove that no polynomial satisfying \eqref{eq:Holder-def} exists. For this reason perhaps, many authors (e.g. \cite{BKK,Bedford}) use the following, simpler definition of H\"older exponent. Write $f\in \tilde{C}^\alpha(t_0)$ if there is a constant $C$ such that
\begin{equation*}
|f(t)-f(t_0)|\leq C|t-t_0|^\alpha \qquad\mbox{for all $t\in(0,1)$},
\end{equation*}
that is, the polynomial $P$ in \eqref{eq:Holder-def} is constant with value $f(t_0)$. Define
\begin{equation}
\tilde{\alpha}_f(t):=\sup\{\alpha>0: f\in \tilde{C}^\alpha(t)\}, \qquad t\in(0,1),
\label{eq:nondirectional-Holder-exponent}
\end{equation}
and let
\begin{equation*}
\tilde{E}_f(\alpha):=\{t\in(0,1): \tilde{\alpha}_f(t)=\alpha\}, \qquad \alpha>0.
\end{equation*}
We shall call $\tilde{\alpha}_f(t)$ the {\em nondirectional H\"older exponent} of $f$ at $t$, and refer to the function $\alpha\mapsto \dim_H\tilde{E}_f(\alpha)$ as the {\em nondirectional H\"older spectrum} of $f$. Observe that $\alpha_f(t)\geq\tilde{\alpha}_f(t)$, but the reverse inequality may fail in general. For example, if $f(t)=t^2$, then $\alpha_f(0)=\infty$ (since one can take $P(t)=t^2$ in \eqref{eq:Holder-def} for all $\alpha>2$), but $\tilde{\alpha}_f(0)=2$. In addition, a desirable property of H\"older exponents is that they are left unchanged upon perturbation of $f$ by a smooth function $g$. Indeed, $\alpha_f(t)=\alpha_{f+g}(t)$, whereas $\tilde{\alpha}_f(t)\neq\tilde{\alpha}_{f+g}(t)$ in general.

H\"older spectra are an important analytical tool in the study of certain physical processes that exhibit a wide range of local regularity behavior, such as intermittent turbulence flows or intensity of seismic waves; see \cite{Frisch,Mandelbrot}. They were studied by Jaffard \cite{Jaffard1,Jaffard2} for a large class of self-similar functions using wavelet methods. Jaffard's work assumes a certain smoothness condition which our functions do not satisfy, but Jaffard and Mandelbrot \cite{JafMan} later modified the wavelet approach to compute the H\"older spectrum of the P\'olya curve. Unfortunately, their proof omits some critical details and the final expression is incorrect. Ben Slimane \cite{BenSlimane} evaluates the multifractal spectrum of a family of self-similar functions based on binary splitting of the unit interval, which includes the Riesz-Nagy function. Both \cite{JafMan} and \cite{BenSlimane} use the Schauder basis, which is ideally suited to the case $m=2$. But it is less clear how to identify a suitable wavelet basis for $m\geq 3$, and moreover, the theorems underlying the wavelet method are rather technical. By contrast, our approach here, while not without technicalities, is completely elementary.

Another relevant paper, by Seuret \cite{Seuret}, uses an associated multinomial measure to compute the pointwise H\"older spectrum of Okamoto's function. His final expression too is incorrect, due to some unfortunate transcription errors. More importantly, Seuret does not carefully address the subtlety of H\"older exponents greater than one, where the polynomial $P$ in \eqref{eq:Holder-def} might be of higher degree; that is, he seems to assume without proof that $\alpha_f(t)=\tilde{\alpha}_f(t)$. We will show that this is indeed the case for Okamoto's function, and more generally, for all $f$ of the form \eqref{eq:our-self-affine-functions-intro} provided that $c_1=\dots=c_m=1/m$ and $\boldsymbol\eps=(0,0,\dots,0)$. We conjecture that $\alpha_f(t)=\tilde{\alpha}_f(t)$ regardless of the values of the $c_i$ and $\eps_i$. 

Theorem \ref{thm:Holder-spectrum} gives the nondirectonal H\"older spectrum of $f$, which is shown to satisfy the classical multifractal formalism. This is established by first obtaining an expression for $\tilde{\alpha}_f(t)$ at any point $t$, which seems interesting in its own right. A crucial tool in the proof of Theorem \ref{thm:Holder-spectrum} is the duality principle formulated in Proposition \ref{prop:duality}.

In Section \ref{sec:multifractal} we apply our result to the multifractal spectrum of the self-similar measures $\mu$ from Example \ref{ex:self-similar-measure}. We refine the classical multifractal formalism by showing that it holds also for the lower density of $\mu$. 

The final section of the paper connects our work to that of Seuret \cite{Seuret}, by showing that all functions of the form \eqref{eq:our-self-affine-functions-intro} can be written as a composition of a monofractal function and an increasing function, or time subordinator.

While the work for this paper was undertaken, a closely related article by B\'ar\'any et al.~\cite{BKK} appeared on the {\em arXiv}. That paper considers a more general setup, in which the maps $S_1,\dots,S_{m}$ are arbitrary affine contractions on $\RR^d$. While \cite{BKK} is quite general and technically sophisticated, our restriction here to similitudes offers several advantages: (i) B\'ar\'any et al.~define the pointwise H\"older exponent to be $\tilde{\alpha}_f(t)$, rather than $\alpha_f(t)$. While we are able to show that both definitions are equivalent for some functions of the form \eqref{eq:our-self-affine-functions-intro}, this is far from clear for the larger class of functions in \cite{BKK}; (ii) The main results of \cite{BKK} require that the maps $S_1,\dots,S_{m}$ satisfy a certain positivity condition which rules out many interesting examples, including the P\'olya curve; (iii) The authors of \cite{BKK} succeed only in determining the ``upper half" of the nondirectional H\"older spectrum. In order to obtain the full spectrum they need an additional and rather restrictive quasi-symmetry condition, which in our setting reduces to $\log\lambda_1/\log c_1=\log\lambda_m/\log c_m$. By focusing exclusively on similitudes, we obtain the full multifractal spectrum without having to make such a symmetry assumption; (iv) Our results are more explicit, and are obtained using elementary methods. The price to pay is, of course, that our results do not cover functions such as the main example in \cite{BKK}, a curve introduced by de Rham. Thus, it seems that the present article and \cite{BKK} complement each other quite well.

\section{Main results} \label{sec:results}

In what follows, we shall consider $f:[0,1]\to\RR^d$ to be differentiable at $t\in(0,1)$ if it has a well-defined {\em finite} derivative at $t$. From now on it will be assumed without further mention that $f$ is defined by \eqref{eq:our-self-affine-functions-intro} and that \eqref{eq:lambda-assumption} holds.

\begin{proposition} \label{prop:zero-or-none}
If $f$ is differentiable at a point $t$, then $f'(t)=0$.
\end{proposition}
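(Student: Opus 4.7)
The plan is to exploit the self-similar structure of $f$ at every scale near $t$, and to derive a relation among the ratios $\lambda_j/c_j$ that, combined with assumption \eqref{eq:lambda-assumption}, forces $f'(t)$ to vanish. Suppose $f$ is differentiable at $t$ with $f'(t)=v\in\RR^d$. For each $n$, let $I_n=\phi_{i_1}\circ\cdots\circ\phi_{i_n}([0,1])$ be a level-$n$ subinterval containing $t$ (choose either one if $t$ lies on a common boundary), and for each $j\in\{1,\dots,m\}$ let
\begin{equation*}
[u_n^{(j)},w_n^{(j)}]:=\phi_{i_1}\circ\cdots\circ\phi_{i_n}(I_j)\subseteq I_n
\end{equation*}
be the corresponding level-$(n+1)$ subinterval.

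Iterating \eqref{eq:our-self-affine-functions-intro}, $f$ sends the two endpoints of $[u_n^{(j)},w_n^{(j)}]$ to the images of $\mathbf{a}$ and $\mathbf{b}$ under the similitude $T:=S_{i_1}\circ\cdots\circ S_{i_n}\circ S_j$, whose ratio is $\lambda_{i_1}\cdots\lambda_{i_n}\lambda_j$. Since $|\mathbf{a}-\mathbf{b}|=1$ and the subinterval has length $c_{i_1}\cdots c_{i_n}c_j$,
\begin{equation*}
\frac{|f(w_n^{(j)})-f(u_n^{(j)})|}{w_n^{(j)}-u_n^{(j)}}=L_n\cdot\frac{\lambda_j}{c_j},\qquad L_n:=\prod_{k=1}^n\frac{\lambda_{i_k}}{c_{i_k}}.
\end{equation*}
Both endpoints lie in $I_n$, so $|u_n^{(j)}-t|,|w_n^{(j)}-t|\leq |I_n|=c_{i_1}\cdots c_{i_n}\to 0$, and moreover each is bounded by $(1/c_j)(w_n^{(j)}-u_n^{(j)})$. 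Writing $f(s)=f(t)+v(s-t)+r(s)$ with $|r(s)|=o(|s-t|)$ at $s=u_n^{(j)},w_n^{(j)}$ and subtracting, this uniform ratio bound absorbs the remainder terms and yields
\begin{equation*}
\frac{f(w_n^{(j)})-f(u_n^{(j)})}{w_n^{(j)}-u_n^{(j)}}\longrightarrow v,
\end{equation*}
hence $L_n\cdot(\lambda_j/c_j)\to|v|$ as $n\to\infty$, for every $j\in\{1,\dots,m\}$.

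Now assume for contradiction that $|v|>0$. If $\lambda_{j^*}=0$ for some $j^*$, the $j^*$-equation gives $0\to|v|$, impossible; so every $\lambda_j>0$. Then $L_n\to|v|c_j/\lambda_j$ for each $j$, and since the limit cannot depend on $j$, the quantity $K:=\lambda_j/c_j$ is independent of $j$. Summing $\lambda_j=Kc_j$ over $j$ yields $K=\sum_j\lambda_j\geq 1$, while $L_n=K^n$, so $|v|=\lim_n K^{n+1}$ is finite and positive only if $K=1$, i.e., $(\lambda_1,\dots,\lambda_m)=(c_1,\dots,c_m)$, contradicting \eqref{eq:lambda-assumption}. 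Therefore $v=0$.

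The main (mild) obstacle is the two-endpoint difference-quotient step: one must argue that $(f(w_n^{(j)})-f(u_n^{(j)}))/(w_n^{(j)}-u_n^{(j)})\to v$ even when $t$ sits outside $[u_n^{(j)},w_n^{(j)}]$, which is where the bound $|w_n^{(j)}-t|,|u_n^{(j)}-t|\leq(1/c_j)(w_n^{(j)}-u_n^{(j)})$ is essential in controlling the $o$-terms uniformly in $n$. The boundary cases $t\in\{0,1\}$, and the degenerate case where some $\lambda_{i_k}=0$ along the address of $t$ (so $f$ is locally constant and $f'(t)=0$ trivially), cause no essential change.
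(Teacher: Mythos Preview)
Your proof is correct and follows essentially the same strategy as the paper's: both invoke the generalized chord lemma (Lemma~\ref{lem:generalized-chord-lemma}) on level-$(n+1)$ sub-intervals of $I_n(t)$ to derive a constraint on the ratios $\lambda_j/c_j$ that conflicts with \eqref{eq:lambda-assumption}. Your version is slightly more streamlined in that you apply the chord lemma to all $m$ children of $I_n(t)$ simultaneously, deducing directly that $\lambda_j/c_j$ must be independent of $j$ (and hence equal to $1$); the paper instead first uses the child containing $t$ to force $\lambda_{i_n}/c_{i_n}\to 1$ along the address of $t$, and then compares with a single sibling interval having $\lambda_k\neq c_k$ to reach the contradiction.
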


Our first main result shows that the differentiability of $f$ is completely determined by the contraction ratios $\lambda_1,\dots,\lambda_{m}$ and $c_1,\dots,c_m$.

\begin{theorem} \label{thm:differentiability}
\begin{enumerate}[(i)]
\item If $\lambda_i\geq c_i$ for each $i$, then $f$ is nowhere differentiable;
\item If $\lambda_i<c_i$ for at least one $i$ but $\sum_{i=1}^{m}c_i\log(\lambda_i/c_i)\geq 0$, then $f$ is nondifferentiable almost everywhere but $f'(t)=0$ at uncountably many points;
\item If $\sum_{i=1}^{m}c_i\log(\lambda_i/c_i)<0$, then $f'(t)=0$ almost everywhere but $f$ is nondifferentiable at uncountably many points.
\end{enumerate}
\end{theorem}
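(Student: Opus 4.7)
The proof rests on the symbolic coding of $t\in[0,1]$ via nested cylinders $J_n(t) := \phi_{i_1}\circ\cdots\circ\phi_{i_n}([0,1])$ determined by its (a.e.\ unique) address $(i_1,i_2,\ldots)\in\{1,\ldots,m\}^\NN$. From \eqref{eq:our-self-affine-functions-intro} and the connectivity conditions \eqref{eq:first-fixed-point}--\eqref{eq:connecting-pieces}, $|J_n(t)|=\prod_{k=1}^n c_{i_k}$, and the two endpoints $a_n,b_n$ of $J_n(t)$ satisfy $|f(a_n)-f(b_n)|=\prod_{k=1}^n\lambda_{i_k}$ (since $S_{i_1}\circ\cdots\circ S_{i_n}$ is a similitude of ratio $\prod\lambda_{i_k}$ sending the unit segment $\mathbf{a}\mathbf{b}$ to the chord $f(a_n)f(b_n)$). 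Writing $R_n(t):=\prod_{k=1}^n(\lambda_{i_k}/c_{i_k})$, the triangle inequality yields an endpoint $s_n\in\{a_n,b_n\}$ with $|s_n-t|\leq |J_n(t)|$ and $|f(s_n)-f(t)|\geq\tfrac12\prod_{k=1}^n\lambda_{i_k}$, so that $|f(s_n)-f(t)|/|s_n-t|\geq\tfrac12 R_n(t)$. Since Proposition \ref{prop:zero-or-none} forces any $f'(t)$ to be zero, this endpoint estimate already shows that $f$ is non-differentiable at $t$ whenever $R_n(t)\not\to 0$. Part (i) is then immediate, since $\lambda_i\geq c_i$ for all $i$ forces $R_n(t)\geq 1$ everywhere.

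For parts (ii) and (iii), transfer to the Bernoulli measure $\mu_c$ on $\{1,\ldots,m\}^\NN$ with weights $(c_1,\ldots,c_m)$; the coding map pushes $\mu_c$ to Lebesgue measure on $[0,1]$. The coordinates $X_k:=\log(\lambda_{i_k}/c_{i_k})$ are i.i.d.\ with mean $\chi:=\sum_i c_i\log(\lambda_i/c_i)$, and $\log R_n(t)=X_1+\cdots+X_n$. In case (ii), the standing assumption $\lambda_i<c_i$ for some $i$, together with $\chi\geq 0$, forces $\mathrm{Var}_{\mu_c}(X_1)>0$; then either the strong law (when $\chi>0$) or the law of the iterated logarithm / Chung--Fuchs recurrence (when $\chi=0$) gives $\limsup_n R_n(t)\geq 1$ a.s., so the endpoint argument above yields non-differentiability a.e. In case (iii), the strong law gives $R_n(t)\to 0$ exponentially a.e., and I must upgrade this to $f'(t)=0$. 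For $s\neq t$, choose $n=n(s)$ with $s\in J_n(t)\setminus J_{n+1}(t)$, so that $|f(s)-f(t)|\leq D\prod_{k=1}^n\lambda_{i_k}$ with $D:=\diam\Gamma$. To bound $|s-t|$ from below, use a Borel--Cantelli argument: conditional on $J_{n+1}(t)$ the relative position of $t$ within it is uniform, so $\mathrm{Leb}\{t:\dist(t,\partial J_{n+1}(t))<n^{-2}|J_{n+1}(t)|\}=O(n^{-2})$ is summable, whence a.e.\ $t$ satisfies $|s-t|\geq n^{-2}|J_{n+1}(t)|$ once $n$ is large. This gives $|f(s)-f(t)|/|s-t|\lesssim n^2 c_{i_{n+1}}^{-1}R_n(t)\to 0$.

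For the uncountably many exceptions, note that $\sum\lambda_i\geq 1=\sum c_i$ always supplies some $i^*$ with $\lambda_{i^*}\geq c_{i^*}$, while the hypothesis of (ii) supplies some $j^*$ with $\lambda_{j^*}<c_{j^*}$. In (iii), addresses eventually constantly $i^*$ have $R_n(t)\geq 1$ for all large $n$, giving non-differentiability; uncountability comes from freely varying the initial segment. In (ii), addresses eventually constantly $j^*$ have $R_n(t)$ decaying geometrically while $t$ sits at a stable position well away from all cylinder boundaries, so the estimate of part (iii) trivialises (no Borel--Cantelli needed) and yields $f'(t)=0$ at uncountably many $t$. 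The main obstacle is the boundary-distance estimate in part (iii): the exponential decay of $R_n(t)$ must outpace the polynomial $n^2$ loss from Borel--Cantelli, and one must separately handle the case where some $\lambda_i=0$ (where $\chi=-\infty$ falls into (iii) by convention, and $f$ is locally constant on every cylinder starting with that index, giving $f'(t)=0$ on a set of full measure for free). A secondary subtlety is the $\chi=0$ subcase of (ii), which requires recurrence rather than the strong law — ensured by the standing assumption \eqref{eq:lambda-assumption} via the positivity of the variance of $X_1$.
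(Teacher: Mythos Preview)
Your treatment of the almost-everywhere statements is essentially correct and close in spirit to the paper's: part (i) via the endpoint difference quotient and Proposition~\ref{prop:zero-or-none}, part (ii) via the strong law when $\chi>0$ and recurrence when $\chi=0$, exactly as in the paper. For the a.e.\ conclusion in (iii) your Borel--Cantelli estimate $\dist(t,\partial J_{n+1}(t))\ge n^{-2}|J_{n+1}(t)|$ is a nice alternative to the paper's route: the paper uses Lax's ``next-occurrence'' lemma (Lemma~\ref{lem:Lax-general}) to bound $|t'-t|$ from below, but only for signature $(0,\dots,0)$, and defers the general signature to the H\"older-exponent machinery of Corollary~\ref{cor:holder-exponent-densities}. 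Your Borel--Cantelli argument is signature-independent and more elementary; it is a genuine simplification.

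There is, however, a real gap in your ``uncountably many exceptions'' paragraph. Addresses that are \emph{eventually constant} equal to $i^*$ (or $j^*$) form a \emph{countable} set: there are only countably many finite prefixes, and each determines a single point. So ``freely varying the initial segment'' cannot produce uncountably many $t$. A second, related problem: for an address ending in $j^*{}^{\infty}$ with $j^*\in\{1,m\}$ and $\eps_{j^*}=0$, the point $t$ is an \emph{endpoint} of $J_n(t)$ for all large $n$, so it does not ``sit at a stable position well away from all cylinder boundaries'' and your oscillation-over-distance estimate breaks down on one side.

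The paper sidesteps this entirely by deferring uncountability to Theorem~\ref{thm:Hausdorff-dimension}, which gives the exceptional sets positive Hausdorff dimension. If you want a self-contained elementary fix: note that under \eqref{eq:lambda-assumption} and $\sum\lambda_i\ge 1$ there is always some $i^*$ with $\lambda_{i^*}>c_{i^*}$ \emph{strictly}. For (iii), take any second digit $k\neq i^*$ and consider addresses in $\{i^*,k\}^{\NN}$ in which $k$ appears with density $p$ small enough that $p\log(\lambda_k/c_k)+(1-p)\log(\lambda_{i^*}/c_{i^*})>0$; there are uncountably many such addresses, and along each the endpoint quotient blows up. For (ii), take addresses with prescribed digit frequencies $d_i(t)=p_i$ where $p_{j^*}$ is close to $1$, so that $\sum p_i\log(\lambda_i/c_i)<0$; Lemma~\ref{lem:when-derivative-zero} (or Corollary~\ref{cor:holder-exponent-densities}) then gives $f'(t)=0$, and the Besicovitch--Eggleston set of such $t$ is uncountable.
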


\begin{example} \label{ex:Polya-ctd}
{\rm
Applying Theorem \ref{thm:differentiability} to the P\'olya curve from Example \ref{ex:Polya} and using the identity $2\sin\theta\cos\theta=\sin 2\theta$, we recover Lax's theorem, namely: (i) $f$ is nowhere differentiable when $\theta\geq 30^\circ$; (ii) $f$ is nondifferentiable almost everywhere but differentiable at uncountably many points when $15^\circ\leq\theta<30^\circ$; and (iii) $f$ is differentiable almost everywhere but nondifferentiable at uncountably many points when $\theta<15^\circ$. (We remark that Lax excluded the boundary cases $\theta=30^\circ$ and $\theta=15^\circ$ from his analysis; these were later dealt with by Bumby \cite{Bumby}.)
}
\end{example}

\begin{example} \label{ex:Okamoto-ctd}
{\rm
Let $f$ be Okamoto's function (Example \ref{ex:Okamoto}). Observe that $f$ is strictly increasing, and hence differentiable almost everywhere, when $a<1/2$. When $a>1/2$, we have $(\lambda_1,\lambda_2,\lambda_3)=(a,2a-1,a)$. Solving $\prod_{i=1}^3(3\lambda_i)=27a^2(2a-1)=1$ gives $a=a_0\approx .5592$. We now obtain Okamoto's result \cite{Okamoto}: (i) $f$ is nowhere differentiable when $a\geq 2/3$; (ii) $f$ is nondifferentiable almost everywhere but differentiable at uncountably many points when $a_0\leq a<2/3$; and (iii) $f$ is differentiable almost everywhere but nondifferentiable at uncountably many points when $a<a_0$. (We remark that Okamoto did not address the boundary case $a=a_0$, which was later settled by Kobayashi \cite{Kobayashi}.)
}
\end{example}

A natural next question is: What is the Hausdorff dimension of the exceptional sets in cases (ii) and (iii) of Theorem \ref{thm:differentiability}? Let
\begin{equation*}
\mathcal{D}(f):=\{t\in(0,1):f'(t)=0\}, \qquad \mathcal{D}_\sim(f):=[0,1]\backslash \mathcal{D}(f).
\end{equation*}
We will consider the dimensions of $\mathcal{D}(f)$ and $\mathcal{D}_\sim(f)$ in the context of the pointwise H\"older spectrum of $f$. Recall the definitions of $\alpha_f(t)$ and $\tilde{\alpha}_f(t)$ from \eqref{eq:Holder-exponent-definition} and \eqref{eq:nondirectional-Holder-exponent}. We first show that at least in the simplest cases, these two H\"older exponents are the same:

\begin{theorem} \label{thm:simple-Holder-spectrum}
Assume that $c_i=1/m$ for $i=1,\dots,m$, and that $\boldsymbol{\eps}=(0,0,\dots,0)$. Then $\alpha_f(t)=\tilde{\alpha}_f(t)$ for every $t\in(0,1)$.
\end{theorem}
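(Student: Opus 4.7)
The inequality $\alpha_f(t_0) \geq \tilde{\alpha}_f(t_0)$ is immediate, so I only need to prove the reverse. If $\alpha_f(t_0) \leq 2$, the polynomial $P$ in \eqref{eq:Holder-def} has degree at most one, and its linear coefficient equals $f'(t_0)$, which vanishes by Proposition \ref{prop:zero-or-none}; hence $P \equiv f(t_0)$ and \eqref{eq:Holder-def} reduces to the definition of $\tilde{C}^\alpha(t_0)$, yielding $\tilde{\alpha}_f(t_0) \geq \alpha_f(t_0)$.

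For the main case $\alpha_f(t_0) > 2$, I argue by contradiction. Assume $\tilde{\alpha}_f(t_0) < \alpha_f(t_0)$ and choose $\alpha$ with $\max(2, \tilde{\alpha}_f(t_0)) < \alpha < \alpha_f(t_0)$. There is then a polynomial $P(s) = f(t_0) + \sum_{j=j_0}^{k} a_j s^{j}$ of degree $k < \alpha$ (with $a_1 = 0$ by Proposition \ref{prop:zero-or-none}) such that $|f(t) - P(t - t_0)| \leq C|t - t_0|^\alpha$, where $j_0 := \min\{j \geq 2 : a_j \neq 0\}$; at least one $a_j$ must be nonzero, or else $\tilde{\alpha}_f(t_0) \geq \alpha$. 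Let $(i_1, i_2, \dots)$ be the $m$-adic address of $t_0$, set $\phi^{(n)} := \phi_{i_1} \circ \cdots \circ \phi_{i_n}$, $T^{(n)} := S_{i_1} \circ \cdots \circ S_{i_n}$, $\Lambda_n := \lambda_{i_1} \cdots \lambda_{i_n}$, $I_n := \phi^{(n)}([0,1])$, and $\tau_n := (\phi^{(n)})^{-1}(t_0) \in [0,1]$. From $\osc(f, I_n) = \Lambda_n V$ (with $V := \osc(f, [0,1])$), and from $\osc(P(\cdot - t_0), I_n) \leq 2|a_{j_0}| m^{-n j_0} + O(m^{-n(j_0+1)})$, the upper bound $\Lambda_n \leq C_2 m^{-n j_0}$ is immediate. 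For a matching lower bound, evaluate at $t_b$, the endpoint of $I_n$ farthest from $t_0$ (so $|t_b - t_0| \geq \tfrac{1}{2} m^{-n}$): then $|P(t_b - t_0) - f(t_0)| \geq |a_{j_0}| \cdot 2^{-(j_0+1)} m^{-n j_0} - O(m^{-n(j_0+1)})$, while the remainder is $O(m^{-n\alpha})$, absorbable because $\alpha > j_0$. This gives $\Lambda_n \geq c_5 m^{-n j_0}$ for large $n$, so $\Lambda_n \asymp m^{-n j_0}$.

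Applying $(T^{(n)})^{-1}$ to the self-similarity relation $f \circ \phi^{(n)} = T^{(n)} \circ f$ transports the polynomial approximation to the rescaled point $\tau_n$:
\begin{equation*}
f(s) = f(\tau_n) + \sum_{j=j_0}^{k} b_j^{(n)} (s - \tau_n)^{j} + E_n(s), \qquad s \in [0,1],
\end{equation*}
where $b_j^{(n)} := A_n^{-1}(a_j)/m^{nj}$ ($A_n$ being the linear part of the similitude $T^{(n)}$, of ratio $\Lambda_n$) and $|E_n(s)| \leq C|s - \tau_n|^\alpha/(\Lambda_n m^{n\alpha})$. The two-sided bound on $\Lambda_n$ ensures $|b_{j_0}^{(n)}| \in [|a_{j_0}|/C_2,\, |a_{j_0}|/c_5]$, $|b_j^{(n)}| \lesssim m^{-n(j-j_0)} \to 0$ for $j > j_0$, and $\|E_n\|_{\infty} \lesssim m^{-n(\alpha-j_0)} \to 0$. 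By compactness, extract a subsequence with $\tau_{n_\ell} \to \tau^* \in [0,1]$ and $b_{j_0}^{(n_\ell)} \to b^* \in \RR^d \setminus \{0\}$; using continuity of $f$ and passing to the limit in the display,
\begin{equation*}
f(s) = f(\tau^*) + b^{*}(s - \tau^*)^{j_0} \qquad \text{for all } s \in [0,1],
\end{equation*}
so $f$ is a polynomial of degree exactly $j_0 \geq 2$.

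This polynomial identity is incompatible with the functional equation \eqref{eq:our-self-affine-functions-intro}: substituting into $f \circ \phi_i = S_i \circ f$ and equating the $s^{j_0}$-coefficients gives $A_i b^{*} = b^{*}/m^{j_0}$, where $A_i$ is the linear part of $S_i$. Since $A_i$ is a similitude of ratio $\lambda_i$ and $b^{*} \neq 0$, this forces $\lambda_i = m^{-j_0}$ for every $i$, so $\sum_i \lambda_i = m^{1-j_0} < 1$ for $j_0 \geq 2$, contradicting $\sum \lambda_i \geq 1$ (the consequence of \eqref{eq:first-fixed-point}--\eqref{eq:connecting-pieces} noted in the introduction). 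The main obstacle is the lower bound $\Lambda_n \gtrsim m^{-n j_0}$ in the oscillation step: it is precisely this estimate that keeps $b^{*}$ from collapsing to zero while simultaneously driving the remainder $E_n$ to zero, making the passage to a clean polynomial limit possible; the compactness extraction and the rigid consequence for the $\lambda_i$ are then routine.
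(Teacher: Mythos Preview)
Your proof is correct and takes a genuinely different route from the paper's argument. Both proofs begin the same way: reduce to the situation where the approximating polynomial has lowest nontrivial degree $l=j_0\geq 2$ (using Proposition~\ref{prop:zero-or-none} to kill the linear term), which amounts to the relation $\lim_{t\to t_0}(f(t)-f(t_0))/(t-t_0)^{j_0}=\mathbf{x}\neq 0$, equivalently your two-sided bound $\Lambda_n\asymp m^{-nj_0}$. From there the strategies diverge. The paper splits into $t_0\in\mathcal{T}_0$ versus $t_0\notin\mathcal{T}_0$: in the rational case it compares increments over two specific level-$n$ intervals to force $(m+1)^l-m^l=2^l-1$, impossible by convexity; in the irrational case it introduces the discrete set $\mathcal{R}$ of adjacent-increment ratios, shows it has no accumulation points, and derives a contradiction from convergence of certain ratios $b_{n,j}$ to values that would accumulate at $1$. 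Your argument avoids this case split entirely: you pull the H\"older estimate back through the similitude $T^{(n)}$, use $\Lambda_n\asymp m^{-nj_0}$ to see that the rescaled polynomial has bounded leading coefficient and vanishing higher terms and remainder, and by compactness extract a global identity $f(s)=f(\tau^*)+b^*(s-\tau^*)^{j_0}$; plugging this into the functional equation forces $\lambda_i=m^{-j_0}$ for every $i$, whence $\sum_i\lambda_i=m^{1-j_0}<1$, contradicting the connectivity inequality $\sum_i\lambda_i\geq 1$.

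What each approach buys: the paper's argument is more hands-on and localises the contradiction to the behaviour of $f$ on finitely many adjacent intervals, at the cost of a somewhat intricate case analysis and the auxiliary set $\mathcal{R}$. Your rescaling-plus-compactness argument is cleaner and more structural; moreover, nothing in it is tied to $c_i=1/m$ or to $\boldsymbol{\eps}=(0,\dots,0)$ beyond bookkeeping (the same steps give $\lambda_i=c_i^{j_0}$ and hence $\sum_i\lambda_i=\sum_i c_i^{j_0}<1$), so it points toward a proof of the conjecture that $\alpha_f=\tilde{\alpha}_f$ for arbitrary $c_i$ and signature. One small omission worth making explicit: the lower bound $\Lambda_n\gtrsim m^{-nj_0}$ requires $\Lambda_n>0$ for all $n$, which follows because the contradiction hypothesis $\tilde{\alpha}_f(t_0)<\infty$ forces $t_0\in\KK_\phi$, i.e.\ $i_n\in\II_+$ for every $n$.
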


Unfortunately, the author has been unable to extend this theorem to all functions of the form \eqref{eq:our-self-affine-functions-intro}. For the remainder of this section, we will therefore focus on the (easier to analyze) nondirectional H\"older spectrum of $f$; that is, the function $\alpha\mapsto \dim_H\tilde{E}_f(\alpha)$. 

First, we need some additional notation. Let 
\begin{equation*}
\mathcal{I}:=\{1,2,\dots,m\}, \qquad \mathcal{I}_0:=\{i\in\mathcal{I}:\lambda_i=0\}, \qquad \mathcal{I}_+:=\{i\in\mathcal{I}:\lambda_i>0\}.
\end{equation*} 
Define
\begin{equation*}
\rho_i:=\frac{\log\lambda_i}{\log c_i}, \quad i\in\II_+,
\end{equation*}
and let
\begin{equation*}
\alpha_{\min}:=\min_{i\in\II_+}\rho_i, \qquad \alpha_{\max}:=\max_{i\in\II_+}\rho_i.
\end{equation*}
Furthermore, let $s_{\min}$, $s_{\max}$ and $\hat{s}$ be the nonnegative numbers satisfying
\begin{equation*}
\sum_{i: \rho_i=\alpha_{\min}}c_i^{s_{\min}}=1, \qquad \sum_{i: \rho_i=\alpha_{\max}}c_i^{s_{\max}}=1, \qquad \sum_{i\in\II_+}c_i^{\hat{s}}=1.
\end{equation*}
Since $\#\II_+\geq 2$, $\hat{s}>0$. Put
\begin{equation*}
\hat{\alpha}:=\frac{\sum_{i\in\II_+}c_i^{\hat{s}}\log\lambda_i}{\sum_{i\in\II_+}c_i^{\hat{s}}\log c_i}.
\end{equation*}
Note that $\alpha_{\min}\leq \hat{\alpha}\leq \alpha_{\max}$.

For each $q\in\RR$, let $\beta(q)$ be the unique real number such that
\begin{equation}
\sum_{i\in\II_+}\lambda_i^q c_i^{\beta(q)}=1.
\label{eq:scaling-equation}
\end{equation}
It is well known from multifractal theory (e.g. \cite[Chapter 17]{Falconer}) that the function $\beta(q)$ is strictly decreasing and convex, and its Legendre transform
\begin{equation*}
\beta^*(\alpha):=\inf_{q\in\RR}\{\alpha q+\beta(q)\}, \qquad \alpha>0
\end{equation*}
is strictly concave on the interval $[\alpha_{\min},\alpha_{\max}]$, and takes the value $-\infty$ outside this interval.

\begin{theorem} \label{thm:Hausdorff-dimension}
Assume $\lambda_i<c_i$ for at least one $i\in\II$.
\begin{enumerate}[(i)]
\item If $\sum_{i=1}^{m}c_i\log(\lambda_i/c_i)\geq 0$, then $\dim_H\mathcal{D}(f)=\beta^*(1)>0$;
\item If $\sum_{i\in\II_+}c_i^{\hat{s}}\log(\lambda_i/c_i)\geq 0$, then $\dim_H\mathcal{D}_\sim(f)=\hat{s}>0$;
\item If $\sum_{i\in\II_+}c_i^{\hat{s}}\log(\lambda_i/c_i)<0$, then $\dim_H\mathcal{D}_\sim(f)=\beta^*(1)>0$.
\end{enumerate}
\end{theorem}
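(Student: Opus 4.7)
The plan is to recast both sets in symbolic coordinates and combine cylinder-cover upper bounds with Bernoulli-measure lower bounds, using throughout the scaling function $\beta$ from \eqref{eq:scaling-equation}. For $t\in\KK_\phi$ let $\omega=\omega_1\omega_2\dots\in\II_+^\NN$ denote its address under $\{\phi_i\}_{i\in\II_+}$, and set $\Lambda_n:=\prod_{k=1}^n\lambda_{\omega_k}$ and $C_n:=\prod_{k=1}^n c_{\omega_k}$. Since $S_i$ is constant whenever $i\in\II_0$, $f$ is constant on every component of $[0,1]\setminus\KK_\phi$, so $\mathcal{D}_\sim(f)\subseteq\KK_\phi$ and the bound $\dim_H\mathcal{D}_\sim(f)\leq\hat s$ in case (ii) is automatic. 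The analysis preceding Theorem~\ref{thm:differentiability} should moreover yield, on $\KK_\phi$, the equivalences $t\in\mathcal{D}(f)\iff \Lambda_n/C_n\to 0$ and $t\in\mathcal{D}_\sim(f)\iff\limsup_n \Lambda_n/C_n>0$.

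For the remaining upper bounds, note that the inequalities $(\Lambda_n/C_n)^q\geq 1$ (if $\Lambda_n\leq C_n$ and $q\leq 0$) and $(\Lambda_n/(\delta C_n))^q\geq 1$ (if $\Lambda_n\geq\delta C_n$ and $q\geq 0$) give $C_n^s\leq \delta^{-q}\Lambda_n^q C_n^{s-q}$ on the relevant cylinders. Summing over cylinders at level $n$, the product structure yields the bound $(\sum_{i\in\II_+}\lambda_i^q c_i^{s-q})^n$, which is $\leq 1$ whenever $s\geq q+\beta(q)$; a standard cylinder-cover argument then gives $\mathcal{H}^s=0$ on the target set for every such $s$. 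The critical point $q^*$ of the Legendre transform at $\alpha=1$ satisfies $\alpha(q^*)=1$ where $\alpha(q):=-\beta'(q)$; since $\alpha(0)=\hat\alpha$ and $\alpha$ is strictly decreasing, case (i) ($\hat\alpha\leq 1$) forces $q^*\leq 0$ and case (iii) ($\hat\alpha>1$) forces $q^*>0$. In both cases the infimum of $q+\beta(q)$ restricted to the allowed sign of $q$ coincides with the unrestricted infimum $\beta^*(1)$, producing $\dim_H\mathcal{D}(f)\leq\beta^*(1)$ in (i) and $\dim_H\mathcal{D}_\sim(f)\leq\beta^*(1)$ in (iii).

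For the lower bounds I would construct Bernoulli measures $\mu_p$ on $\II_+^\NN$ and project them to $\KK_\phi$ via the coding map. The strong law gives $\tfrac{1}{n}\log(\Lambda_n/C_n)\to \sum p_i\log(\lambda_i/c_i)$ $\mu_p$-almost surely, and the projected measure has local dimension $\sum p_i\log p_i/\sum p_i\log c_i$ at $\mu_p$-typical points. With the Gibbs weights $p_i(q)=\lambda_i^q c_i^{\beta(q)}$ (which sum to $1$ by \eqref{eq:scaling-equation}), a short computation gives drift $(\alpha(q)-1)\sum p_i(q)\log c_i$ and local dimension $\beta^*(\alpha(q))$. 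For (i), choosing $q$ with $\alpha(q)>1$ yields negative drift, placing $\mu_{p(q)}$ inside $\mathcal{D}(f)$ with dimension $\beta^*(\alpha(q))$; letting $\alpha(q)\downarrow 1$ and using continuity of $\beta^*$ produces the lower bound $\beta^*(1)$. For (iii), taking $\alpha(q)<1$ gives positive drift and $\mu_{p(q)}$ supported on $\mathcal{D}_\sim(f)$, and $\alpha(q)\uparrow 1$ again gives $\beta^*(1)$. For (ii), the self-similar measure with weights $p_i=c_i^{\hat s}$ has local dimension $\hat s$ and nonnegative drift, so $\Lambda_n/C_n$ fails to tend to $0$ almost surely, placing this measure on $\mathcal{D}_\sim(f)$.

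Positivity of $\hat s$ follows from $\#\II_+\geq 2$, and positivity of $\beta^*(1)$ from the assumption that $\lambda_i<c_i$ for some $i$ together with the case hypothesis, which ensures $1$ lies strictly between $\alpha_{\min}$ and $\alpha_{\max}$. The main obstacle I foresee is the equality case in (ii), namely $\sum c_i^{\hat s}\log(\lambda_i/c_i)=0$, where the drift is zero and the strong law alone is insufficient to conclude $\Lambda_n/C_n\not\to 0$; here a law-of-the-iterated-logarithm argument on the centered i.i.d.\ variables $\log(\lambda_{\omega_k}/c_{\omega_k})$ handles the generic positive-variance case, while the degenerate zero-variance case forces $\lambda_i=c_i$ throughout $\II_+$, making $\Lambda_n/C_n$ identically $1$ and the conclusion immediate.
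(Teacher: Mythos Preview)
Your overall strategy—symbolic coding, cylinder covers weighted by $\lambda_i^q c_i^{\beta(q)}$, and Bernoulli lower bounds—is sound and close in spirit to the paper. The lower bounds are fine: for $\mu_p$-typical $t$ the digit frequencies exist and are positive on $\II_+$, so the analysis around Lemma~\ref{lem:when-derivative-zero} (adapted to $\KK_\phi$) really does give $f'(t)=0$ when the drift is negative, and $\tilde\alpha_f(t)<1$ (hence non-differentiability) when the drift is positive. The upper bounds in (i) and (ii) are also fine, since they only use the easy inclusion $\mathcal{D}(f)\subseteq\{\Lambda_n/C_n\to 0\}$, which follows from Lemma~\ref{lem:generalized-chord-lemma} and Proposition~\ref{prop:zero-or-none}.

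The genuine gap is in the upper bound for (iii). You assert the equivalence $t\in\mathcal{D}_\sim(f)\iff\limsup_n\Lambda_n/C_n>0$, but only the implication $\Leftarrow$ is true. The reverse fails because $\Lambda_n/C_n$ measures the oscillation of $f$ over the cylinder $I_n(t)$, whereas differentiability at $t$ is also controlled by the oscillation over \emph{adjacent} cylinders. When $\eps_1=\eps_m=0$ and the coding of $t$ ends (at level $n$) in a long run of the digit $m$, the neighbouring level-$n$ cylinder has a coding ending in a long run of $1$'s; if $\log\lambda_1/\log c_1<\log\lambda_m/\log c_m$, its oscillation can exceed $\Lambda_n$ by an exponential factor. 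The paper makes this precise in Theorem~\ref{thm:individual-Holder-exponents}: the H\"older exponent is $\alpha_1(t)$, not $\alpha_0(t)$, and the difference is governed by the run-length $L_n(t)$. One can construct $t\in\KK_\phi$ with $\alpha_0(t)>1>\alpha_1(t)$, so that $\Lambda_n/C_n\to 0$ yet $t\in\mathcal{D}_\sim(f)$. Your covering of $\{\limsup\Lambda_n/C_n\geq\delta\}$ misses all such points.

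The paper closes this gap by routing through the H\"older exponent: $\mathcal{D}_\sim(f)\subseteq\{t:\tilde\alpha_f(t)\leq 1\}$, and Proposition~\ref{prop:Holder-halves}(i) shows the latter has dimension $\beta^*(1)$ when $\hat\alpha>1$. The proof of that proposition is precisely where the run-length correction is absorbed, via the combinatorial Lemmas~\ref{lem:technical1}--\ref{lem:technical2}, which convert a covering of $\{\alpha_1(t)\leq\alpha\}$ into one of $\{\alpha_0(t)\leq\alpha\}$ at a slightly shifted level. To repair your argument you would need an analogous device; without it, the upper bound in (iii) does not follow.
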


\begin{theorem} \label{thm:Holder-spectrum}
\begin{enumerate}[(i)]
\item $\tilde{E}_f(\alpha)=\emptyset$ when $\alpha\not\in[\alpha_{\min},\alpha_{\max}]\cup\{\infty\}$;
\item $\tilde{E}_f(\infty)$ is empty if $\II_0=\emptyset$, and has Lebesgue measure one otherwise;
\item $\dim_H \tilde{E}_f(\alpha)=\beta^*(\alpha)$ for all $\alpha\in(\alpha_{\min},\alpha_{\max})$;
\item $\dim_H \tilde{E}_f(\alpha_{\min})=s_{\min}$, and $\dim_H \tilde{E}_f(\alpha_{\max})=s_{\max}$;
\item The maximum value of $\dim_H \tilde{E}_f(\alpha)$ over $[\alpha_{\min},\alpha_{\max}]$ is attained at $\hat{\alpha}$, and
$\dim_H \tilde{E}_f(\hat{\alpha})=\hat{s}$.
Moreover, if $\II_0=\emptyset$, then $\tilde{E}_f(\hat{\alpha})$ has Lebesgue measure one.
\end{enumerate}
\end{theorem}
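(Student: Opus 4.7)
The plan is first to derive a pointwise formula for $\tilde{\alpha}_f(t)$ in terms of the symbolic address of $t$, and then to apply the standard multifractal formalism. For $t\in\KK_\phi$, there is an address $(i_1,i_2,\dots)\in\II_+^{\NN}$ with $t=\bigcap_{n\geq 1}\phi_{i_1}\circ\cdots\circ\phi_{i_n}([0,1])$; the corresponding cylinder has length $c_{i_1}\cdots c_{i_n}$, and by iterating \eqref{eq:our-self-affine-functions-intro} the oscillation $\osc(f,\phi_{i_1}\circ\cdots\circ\phi_{i_n}([0,1]))$ equals $\lambda_{i_1}\cdots\lambda_{i_n}\cdot\diam(\Gamma)$. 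A matching lower bound $\eta\,\lambda_{i_1}\cdots\lambda_{i_n}$ for $\osc(f,J)$ whenever $J$ is a generic neighbourhood of $t$ of the same order of magnitude follows from the connectivity conditions, which prevent the $S_i$-images from collapsing. Together these yield
\[
\tilde\alpha_f(t)=\liminf_{n\to\infty}\frac{\log(\lambda_{i_1}\cdots\lambda_{i_n})}{\log(c_{i_1}\cdots c_{i_n})},
\]
with the proviso (supplied by Proposition \ref{prop:duality}) that at ``boundary'' points with two admissible codings one must take the better of the two. Part (i) is then immediate, since each $\log\lambda_{i_k}/\log c_{i_k}=\rho_{i_k}\in[\alpha_{\min},\alpha_{\max}]$. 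For part (ii), points of $[0,1]\setminus\KK_\phi$ lie in maximal open intervals on which $f$ is constant, giving $\tilde\alpha_f=\infty$; and $\KK_\phi=[0,1]$ iff $\II_0=\emptyset$, with $\KK_\phi$ having Lebesgue measure zero otherwise.

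For parts (iii)--(v) I proceed by the standard two-sided multifractal argument on the code space $\II_+^{\NN}$. For the upper bound on $\dim_H\tilde E_f(\alpha)$, I cover $\tilde E_f(\alpha)$ by stopped cylinders chosen so that the empirical $\rho$-average is close to $\alpha$, and estimate the number of such cylinders via a large-deviation / entropy count; optimising over $q\in\RR$ yields the Legendre bound $\alpha q+\beta(q)$, hence $\beta^*(\alpha)$. For the lower bound, given $\alpha\in(\alpha_{\min},\alpha_{\max})$ I take the unique $q$ with $-\beta'(q)=\alpha$ and form the Bernoulli product measure $\mu_q$ on $\II_+^{\NN}$ with weights $p_i=\lambda_i^q c_i^{\beta(q)}$, which sum to $1$ by \eqref{eq:scaling-equation}. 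Projecting $\mu_q$ to $[0,1]$, the strong law of large numbers ensures that $\mu_q$-a.e.\ $t$ satisfies $\tilde\alpha_f(t)=\alpha$ and has local dimension $\beta^*(\alpha)$; the mass-distribution principle then transfers this to the Hausdorff dimension lower bound.

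For the extremal cases in part (iv) the same strategy applies but with a degenerate Bernoulli measure: at $\alpha=\alpha_{\min}$ (resp.\ $\alpha_{\max}$) I use the measure supported only on indices $i$ with $\rho_i=\alpha_{\min}$ (resp.\ $\alpha_{\max}$) and weights $c_i^{s_{\min}}$ (resp.\ $c_i^{s_{\max}}$); its support is a self-similar Cantor subset of $\KK_\phi$ of dimension $s_{\min}$ (resp.\ $s_{\max}$) on which $\tilde\alpha_f$ is constantly equal to the relevant extreme. Part (v) is the $q=0$ specialisation of the main construction: $\beta(0)=\hat s$ and $-\beta'(0)=\hat\alpha$ give $\dim_H\tilde E_f(\hat\alpha)=\hat s$, which by concavity of $\beta^*$ is the global maximum. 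When $\II_0=\emptyset$ we have $\sum_{i\in\II_+}c_i=1$, so $\hat s=1$ and $\mu_0$ coincides with Lebesgue measure on $[0,1]$; hence Lebesgue-a.e.\ $t$ has exponent $\hat\alpha$, proving the final statement.

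The main obstacle I expect is the rigorous derivation of the pointwise formula, in particular the uniform lower bound on cylinder oscillations and the treatment of points with two admissible codings — this is precisely where Proposition \ref{prop:duality} is indispensable, converting the a priori two-sided nature of $\tilde\alpha_f$ (as a limit over all shrinking intervals through $t$) into a one-sided statement about the best symbolic coding. Once this pointwise formula is in hand, the remaining parts follow from classical multifractal machinery, adapted only by restricting the code space to $\II_+^{\NN}$ when $\II_0\neq\emptyset$.
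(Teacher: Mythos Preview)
Your overall strategy is sound for the lower bounds, but there is a genuine gap in the derivation of the pointwise formula, and you have misread what Proposition~\ref{prop:duality} says.

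The claimed identity $\tilde\alpha_f(t)=\liminf_n \log(\lambda_{i_1}\cdots\lambda_{i_n})/\log(c_{i_1}\cdots c_{i_n})$ is \emph{false} in general. The ``matching lower bound $\eta\,\lambda_{i_1}\cdots\lambda_{i_n}$ for $\osc(f,J)$'' that you assert breaks down because the level-$n$ cylinder adjacent to $I_n(t)$ need not have length comparable to $|I_n(t)|$. When $\eps_1=\eps_m=0$ and the coding of $t$ ends (at stage $n$) in a block $m^l$, the neighbouring cylinder ends in $1^l$, so its length differs from $|I_n(t)|$ by the factor $(c_1/c_m)^l$ and its $f$-oscillation by the factor $(\lambda_1/\lambda_m)^l$; these are unbounded in $l$. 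The paper's Theorem~\ref{thm:individual-Holder-exponents} shows that this produces a correction term $K\chi_n(t)L_n(t)/n$ (with $K=(\log c_m/\log c_1)\log\lambda_1-\log\lambda_m$ and $L_n(t)$ the terminal run-length of the digit $m$), and that $\tilde\alpha_f(t)=\alpha_1(t)$ can be strictly smaller than the cylinder-ratio quantity $\alpha_0(t)$. Consequently your upper-bound covering --- by cylinders whose empirical $\rho$-average is near $\alpha$ --- can miss points of $\tilde E_f(\alpha)$ altogether when $\alpha<\hat\alpha$, since such a point may have $\alpha_0(t)>\alpha$ and hence no cylinder with ratio near $\alpha$. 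The paper repairs this with a nontrivial combinatorial reduction (Lemmas~\ref{lem:technical1}--\ref{lem:technical2} and Proposition~\ref{prop:Holder-halves}) that converts coverings indexed by $(n,l)$ with the run-length constraint into ordinary cylinder coverings at a shifted level $n\pm n_0$.

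Separately, Proposition~\ref{prop:duality} has nothing to do with boundary points or double codings: it is the purely variational identity $\beta^*(\alpha)=\max\{H(\mathbf p):\mathbf p\in\Delta_m^0,\ \sum_{i\in\II_+} p_i(\log\lambda_i-\alpha\log c_i)=0\}$, used only to match the entropy of the optimal Bernoulli measure to the Legendre transform. It plays no role whatsoever in the pointwise computation of $\tilde\alpha_f(t)$. Your lower-bound argument via the Gibbs measures $\mu_q$ is essentially correct and is what the paper does (Lemma~\ref{lem:lower-bound} through Corollary~\ref{cor:holder-exponent-densities} and \eqref{eq:general-Eggleston}); it works precisely because $\mu_q$-typical points have well-defined digit frequencies, for which the run-length correction vanishes. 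But the upper bound genuinely requires the extra analysis you have not supplied.
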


\begin{remark}
{\rm
Let $\KK_\phi$ be the self-similar set defined by \eqref{eq:set-equation}. Then $\dim_H \KK_\phi=\hat{s}$ and $0<\HH^{\hat{s}}(\KK_\phi)<\infty$, where $\HH^s$ denotes $s$-dimensional Hausdorff measure. Suppose $\sum_{i\in\II_+}c_i^{\hat{s}}\log(\lambda_i/c_i)<0$. Then $\hat{\alpha}>1$, so Theorem \ref{thm:Hausdorff-dimension}(iii) implies $\dim_H\mathcal{D}_\sim(f)=\beta^*(1)<\beta^*(\hat{\alpha})=\hat{s}$. We conclude that $f$ is differentiable $\HH^{\hat{s}}$-almost everywhere on $\KK_\phi$ (and of course, $f$ is differentiable everywhere outside $\KK_\phi$ as well).
}
\end{remark}

\begin{remark}
{\rm
When $c_i=1/m$ for each $i$ and $\boldsymbol{\eps}=(0,0,\dots,0)$, we may replace $\tilde{E}_f$ with $E_f$ in Theorem \ref{thm:Holder-spectrum}, in view of Theorem \ref{thm:simple-Holder-spectrum}.
}
\end{remark}

Before illustrating the last two theorems, we present an alternative view of $\dim_H \tilde{E}_f(\alpha)$ that will be important for the proofs later, and that is sometimes more convenient for concrete computations. Define the function
\begin{equation}
H(p_1,\dots,p_{m}):=\frac{\sum_{i=1}^{m}p_i\log p_i}{\sum_{i=1}^{m}p_i\log c_i},
\label{eq:h-definition}
\end{equation}
where as usual, we set $0\log 0\equiv 0$. We denote by $\Delta_m$ the standard simplex in $\RR^m$:
\begin{equation*}
\Delta_m:=\left\{\mathbf{p}=(p_1,\dots,p_{m})\in \RR^m: p_i\geq 0\ \mbox{for each $i$ and}\ \sum_{i=1}^{m}p_i=1\right\}.
\end{equation*}
Let
\begin{equation*}
\Delta_m^0:=\{\mathbf{p}=(p_1,\dots,p_{m})\in\Delta_m: p_i=0\ \mbox{for $i\in\mathcal{I}_0$}\}.
\end{equation*}
The following equality generalizes the ``maximum entropy/minimum pressure" duality observed in \cite[Theorem 11]{BSS}. 

\begin{proposition} \label{prop:duality}
For each $\alpha\in[\alpha_{\min},\alpha_{\max}]$, we have
\begin{equation}
\beta^*(\alpha)
=\max\bigg\{H(\mathbf{p}): \mathbf{p}=(p_1,\dots,p_{m})\in\Delta_m^0,\ \sum_{i\in\mathcal{I_+}}p_i(\log\lambda_i-\alpha\log c_i)=0\bigg\}.
\label{eq:duality}
\end{equation}
\end{proposition}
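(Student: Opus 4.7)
The proposition is an instance of the classical Legendre--Gibbs duality, and my plan is to prove both the lower and upper bounds by constructing an explicit maximizer on the right-hand side and using a relative-entropy inequality.

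First I would fix $\alpha \in (\alpha_{\min},\alpha_{\max})$ and locate the optimizer in $\beta^*(\alpha)=\inf_q\{\alpha q+\beta(q)\}$. Since $\beta$ is smooth, strictly decreasing, and strictly convex on $\RR$ with $\beta'(q)$ ranging over $(-\alpha_{\max},-\alpha_{\min})$, there is a unique $q=q(\alpha)$ with $\beta'(q)=-\alpha$. Differentiating the defining identity $\sum_{i\in\II_+}\lambda_i^q c_i^{\beta(q)}=1$ implicitly yields
\begin{equation*}
\sum_{i\in\II_+}\lambda_i^q c_i^{\beta(q)}\bigl(\log\lambda_i+\beta'(q)\log c_i\bigr)=0,
\end{equation*}
which, after substituting $\beta'(q)=-\alpha$, becomes exactly the linear constraint in \eqref{eq:duality}. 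I would therefore define $p_i^*:=\lambda_i^q c_i^{\beta(q)}$ for $i\in\II_+$ and $p_i^*:=0$ for $i\in\II_0$; then $\mathbf{p}^*\in\Delta_m^0$ is a feasible point. A direct computation using $\log p_i^*=q\log\lambda_i+\beta(q)\log c_i$ and the constraint $\sum p_i^*\log\lambda_i=\alpha\sum p_i^*\log c_i$ gives
\begin{equation*}
\sum_i p_i^*\log p_i^* = (q\alpha+\beta(q))\sum_i p_i^*\log c_i,
\end{equation*}
so $H(\mathbf{p}^*)=q\alpha+\beta(q)=\beta^*(\alpha)$, showing ``$\leq$'' in \eqref{eq:duality}.

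For the reverse inequality, I would appeal to the nonnegativity of the Kullback--Leibler divergence. For any feasible $\mathbf{p}\in\Delta_m^0$ satisfying $\sum p_i(\log\lambda_i-\alpha\log c_i)=0$ and any $q_0\in\RR$, set $q_i:=\lambda_i^{q_0}c_i^{\beta(q_0)}$ (a probability vector supported on $\II_+$). Then $D(\mathbf{p}\|\mathbf{q})\geq 0$ unfolds to
\begin{equation*}
\sum_i p_i\log p_i \;\geq\; q_0\sum_i p_i\log\lambda_i+\beta(q_0)\sum_i p_i\log c_i \;=\;(q_0\alpha+\beta(q_0))\sum_i p_i\log c_i,
\end{equation*}
where the constraint was used in the last step. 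Since each $c_i<1$ forces $\sum p_i\log c_i<0$, dividing through reverses the inequality and gives $H(\mathbf{p})\leq q_0\alpha+\beta(q_0)$. Taking the infimum over $q_0$ yields $H(\mathbf{p})\leq\beta^*(\alpha)$, completing the interior case.

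The main obstacle, and the only nonroutine step, is handling the endpoints $\alpha=\alpha_{\min}$ and $\alpha=\alpha_{\max}$, where the optimizing $q(\alpha)$ escapes to $\pm\infty$ and the construction of $\mathbf{p}^*$ must be replaced. Here I would take $\mathbf{p}^*$ supported only on the indices $i$ achieving $\rho_i=\alpha_{\min}$ (respectively $\alpha_{\max}$), with weights $p_i^*=c_i^{s_{\min}}$ (respectively $c_i^{s_{\max}}$); these satisfy the constraint identically because $\log\lambda_i=\alpha_{\min}\log c_i$ on the support, and a short calculation shows $H(\mathbf{p}^*)=s_{\min}$, matching the known values $\beta^*(\alpha_{\min})=s_{\min}$ and $\beta^*(\alpha_{\max})=s_{\max}$ from the Legendre transform of $\beta$. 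The upper bound at the endpoints follows by continuity of $\beta^*$ and the interior argument, or by letting $q_0\to\pm\infty$ in the KL estimate above.
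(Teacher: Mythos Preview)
Your proposal is correct and follows essentially the same route as the paper. Both arguments construct the Gibbs vector $p_i^*=\lambda_i^{q}c_i^{\beta(q)}$ at the optimal $q$ to exhibit a feasible point with $H(\mathbf{p}^*)=\beta^*(\alpha)$, and both prove the upper bound $H(\mathbf{p})\leq \alpha q+\beta(q)$ via the same convexity inequality---you phrase it as nonnegativity of the Kullback--Leibler divergence, while the paper writes it out as Jensen's inequality for the logarithm, but these are the same computation. At the endpoints the paper simply invokes continuity of both sides, whereas you also sketch the explicit limiting maximizer; either version works.
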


Proposition \ref{prop:duality} is geometrically pleasing: it represents $\beta^*(\alpha)$ as the maximum value of $H$ over the intersection of a simplex with a hyperplane. This intersection is nonempty for $\alpha\in[\alpha_{\min},\alpha_{\max}]$, as is easy to see. The characterization is especially useful when $m=2$, in which case the intersection consists of a single point, and no maximization or minimization is necessary. In this case, solving the equations $p_1+p_2=1$ and $p_1(\log\lambda_1-\alpha\log c_1)+p_2(\log\lambda_2-\alpha\log c_2)=0$ gives
\begin{equation}
p_1=\frac{\alpha\log c_2-\log\lambda_2}{\log\lambda_1-\log\lambda_2-\alpha(\log c_1-\log c_2)}, \qquad p_2=1-p_1.
\label{eq:binary-case-proportions}
\end{equation}

Observe that when $c_1=c_2=1/2$, $p_1$ varies linearly as a function of $\alpha$, and takes the values $0$ and $1$ at the endpoints of $[\alpha_{\min},\alpha_{\max}]$. Let $H(u):=H(u,1-u)=-u\log_2 u-(1-u)\log_2(1-u)$. Since $H$ is symmetric, we see from Theorem \ref{thm:Holder-spectrum} and Proposition \ref{prop:duality} that
\begin{equation}
\dim_H E_f(\alpha)=H\left(\frac{\alpha-\alpha_{\min}}{\alpha_{\max}-\alpha_{\min}}\right), \qquad \alpha\in [\alpha_{\min},\alpha_{\max}].
\label{eq:scaling-Holder-spectrum}
\end{equation}
In other words, a change in the values of $\lambda_1$ and $\lambda_2$ results only in a horizontal scaling and translation of the H\"older spectrum, but does not affect its general shape. 

When $m\geq 3$, one can either compute $\beta^*(\alpha)$ by minimizing $\alpha q+\beta(q)$ over $q$, or one can apply the method of Lagrange multipliers to the constrained optimization problem in \eqref{eq:duality}. Both approaches have their challenges in practice: the former requires one to estimate $\beta(q)$ numerically first for every real $q$; and the latter entails solving a system of nonlinear equations in $p_1,\dots,p_m$.
 In the special case when $c_i=1/m$ for $i=1,\dots,m$, however, both methods quickly yield a fairly explicit answer. We then have simply
\begin{equation*}
\beta(q)=\log_m\sum_{i\in\mathcal{I}_+}\lambda_i^q,
\end{equation*}
and setting $\beta'(q)=-\alpha$ gives that $\alpha q+\beta(q)$ is minimized at the value of $q$ for which
\begin{equation}
\sum_{i\in\mathcal{I}_+}\lambda_i^q \log(m^\alpha\lambda_i)=0.
\label{eq:Holder-s-equation}
\end{equation}
(This $q$ exists and is unique, since the function $q\mapsto \sum_{i\in\mathcal{I}_+}(m^\alpha\lambda_i)^q \log(m^\alpha\lambda_i)$ is strictly increasing and tends to $-\infty$ as $q\to-\infty$, and to $+\infty$ as $q\to+\infty$, provided $\alpha\in(\alpha_{\min},\alpha_{\max})$.) Alternatively, the method of Lagrange multipliers yields that the constrained maximum in \eqref{eq:duality} is attained at the point $\mathbf{p}^*=(p_1^*,\dots,p_{m}^*)$ given by
\begin{equation*}
p_i^*=\frac{\lambda_i^q}{\sum_{j\in\mathcal{I}_+}\lambda_j^q}, \qquad i\in\mathcal{I}_+
\end{equation*}
and $p_i^*=0$ for $i\in\mathcal{I}_0$, with $q$ as in \eqref{eq:Holder-s-equation}, after which some further algebra gives
\begin{equation*}
\beta^*(\alpha)=H(p_1^*,\dots,p_{m}^*)=\alpha q+\log_m\sum_{i\in\mathcal{I}_+}\lambda_i^q.
\end{equation*}

\begin{customex}{\ref{ex:Polya-ctd} (continued)}
{\rm
For the P\'olya curve, $\alpha_{\min}=-\log_2\cos\theta$, $\alpha_{\max}=-\log_2\sin\theta$, and $\hat{\alpha}=-\frac12\log_2(\sin\theta\cos\theta)$. A computation based on \eqref{eq:scaling-Holder-spectrum} yields
\begin{align*}
\dim_H E_f(\alpha)&=\log_2(-\log\tan\theta)\\
&\qquad+\frac{\log(2^\alpha\cos\theta)\log_2\log(2^\alpha\cos\theta)-\log(2^\alpha\sin\theta)\log_2(-\log(2^\alpha\sin\theta))}{\log\tan\theta}
\end{align*}
for $\alpha\in [\alpha_{\min},\alpha_{\max}]$. This expression, graphed in Figure \ref{fig:Holder-spectra-examples}(a)  for $\theta=25^\circ$, can be taken to correct the one given in \cite{JafMan}.

Moreover, setting $\alpha=1$ in \eqref{eq:binary-case-proportions} gives
\begin{equation*}
p_1=\frac{\log(2\cos\theta)}{\log(\cot\theta)}.
\end{equation*}
This is increasing in $\theta$ on $0\leq\theta\leq 30^\circ$. Writing $H(p):=H(p,1-p)$, we find that for $15^\circ\leq \theta<30^\circ$, $\dim_H \mathcal{D}(f)=H(p_1)$; and for $0<\theta<15^\circ$, $\dim_H\mathcal{D}_\sim(f)=H(p_1)$.
}
\end{customex}

\begin{figure}
\begin{center}
\vspace{-.2\textheight}
(a)\epsfig{bbllx=0,bblly=630,bburx=550,bbury=1230,file=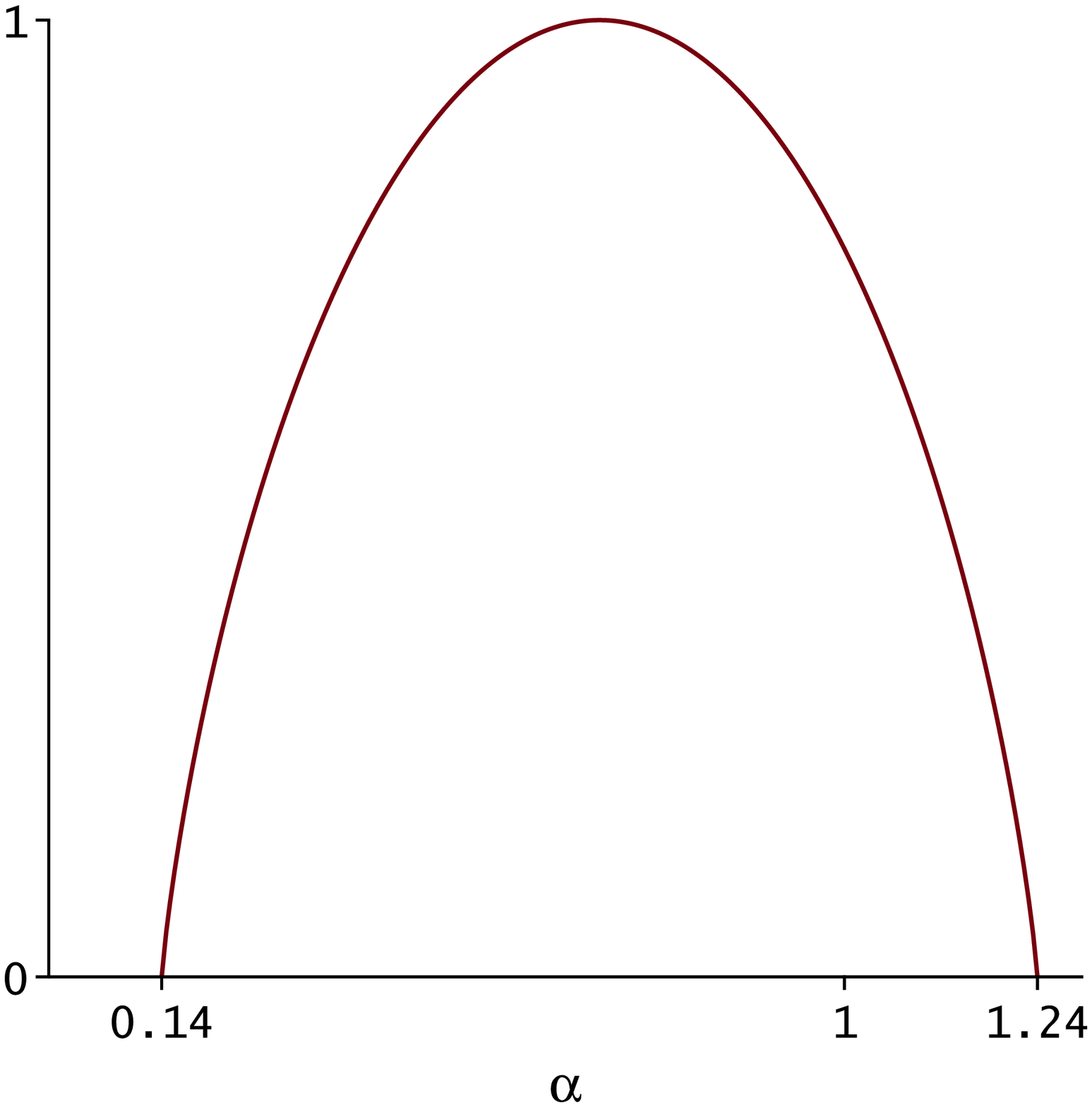, height=.25\textheight, width=.35\textwidth} \qquad\quad
(b)\hspace{-0.1in}\epsfig{bbllx=0,bblly=630,bburx=550,bbury=1230,file=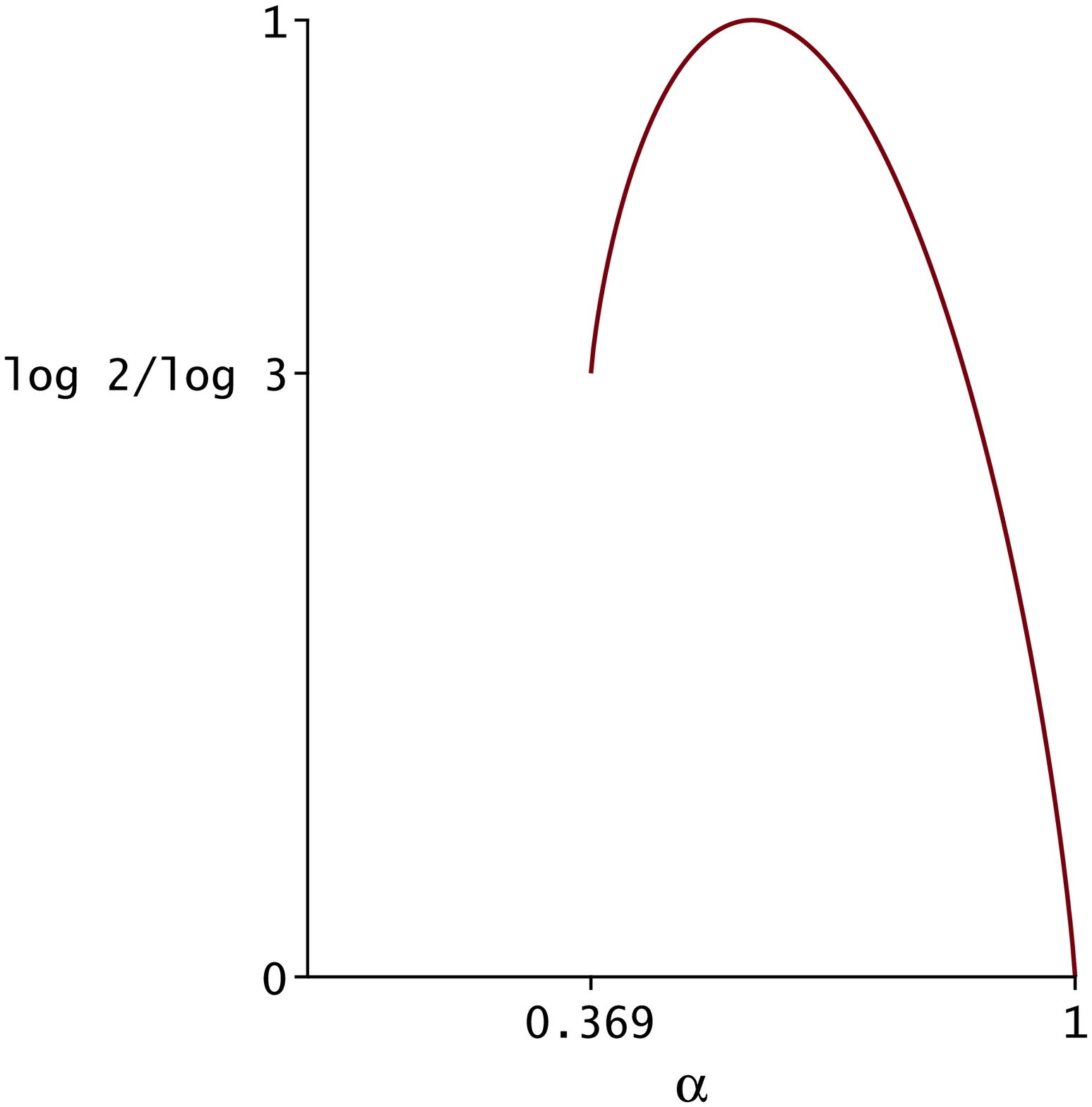, height=.25\textheight, width=.4\textwidth}
\end{center}
\vspace{.19\textheight}
\caption{Pointwise H\"older spectrum of P\'olya's space-filling curve (left, for $\theta=25^\circ$) and Okamoto's function (right, for $a=2/3$).}
\label{fig:Holder-spectra-examples}
\end{figure}

\begin{customex}{\ref{ex:Okamoto-ctd} (continued)}
{\rm
For the H\"older spectrum of Okamoto's function we consider three cases. First, if $a=1/2$, we have $\mathcal{I}_+=\{1,3\}$ and $\alpha_f(t)=\infty$ for every $t$ outside the ternary Cantor set $\mathcal{C}$ (and hence almost everywhere); while $\alpha_f(t)=\alpha_{\min}=\alpha_{\max}=\hat{\alpha}=\log_3 2$ for every $t\in\mathcal{C}$. Thus, $\dim_H E_f(\log_3 2)=\log_3 2$.

If $a\neq 1/2$ and $a>1/3$, then $\alpha_{\min}=-\log_3 a$ and $\alpha_{\max}=-\log_3|2a-1|$.
It is intuitively clear (and easy to check) that the constrained maximum in \eqref{eq:duality} must be obtained when $p_1=p_3$, since $\lambda_1=\lambda_3$. A straightforward calculation shows that for $\alpha_{\min}\leq\alpha\leq\alpha_{\max}$, 
\begin{equation}
\dim_H E_f(\alpha)=H(p_1^*,p_2^*,p_3^*)=-p^*\log_3 p^*-(1-p^*)\log_3\left(\frac{1-p^*}{2}\right),
\label{eq:Okamoto-Holder}
\end{equation}
where 
\begin{equation}
p^*:=p_2^*=\frac{\log a+\alpha\log 3}{\log a-\log|2a-1|},
\label{eq:proportion-of-ones}
\end{equation}
and $p_1^*=p_3^*=(1-p_2^*)/2$. See Figure \ref{fig:Holder-spectra-examples}(b). At the endpoints of the multifractal spectrum, we have $\dim_H E_f(\alpha_{\min})=\log_3 2$ and $\dim_H E_f(\alpha_{\max})=0$. However, $E_f(\alpha_{\max})$ is uncountably large: a closer inspection reveals that it consists of those points $t$ in whose ternary expansion the digit 1 has density 1, although $f\not\in C^{\alpha_{\max}}(t)$ for each such $t$.

Note that $p^*$ increases linearly from $0$ at $\alpha_{\min}$ to $1$ at $\alpha_{\max}$. Thus, the graph of $\dim_H E_f(\alpha)$ is the same for each $a>1/3, a\neq 1/2$, up to a horizontal scaling and translation. 

Finally, when $a<1/3$ the calculation is the same as in the second case above, but the endpoints are reversed: $\alpha_{\min}=-\log_3(1-2a)$, and $\alpha_{\max}=-\log_3 a$. Here the worst regularity ($\alpha_f(t)=\alpha_{\min}$) is achieved when the {\em upper} density of the digit 1 in the ternary expansion of $t$ is 1, so $E_f(\alpha_{\min})$ is uncountable but of Hausdorff dimension zero. The graph of $\dim_H E_f(\alpha)$ is the reverse of that in the case $a>1/3$.

Setting $\alpha=1$ in \eqref{eq:proportion-of-ones}, the right hand side of \eqref{eq:Okamoto-Holder} gives the Hausdorff dimension of $\mathcal{D}(f)$ when $a\geq a_0\approx .5592$; and of $\mathcal{D}_\sim(f)$ when $a<a_0$. This result was first reported in \cite{Allaart}, with a subsequent generalization in \cite{Allaart2}.
}
\end{customex}

\begin{example} \label{ex:Riesz-Nagy-ctd}
{\rm
Let $f$ be the Riesz-Nagy function with parameter $a$ (Example \ref{ex:Riesz-Nagy}) or the Gray code singular function with parameter $a$ (Example \ref{ex:Gray}). From \eqref{eq:scaling-Holder-spectrum} it follows that $\dim_H D_\sim(f)=H(p,1-p)$, where 
\begin{equation*}
p=\frac{\log 2a}{\log a-\log(1-a)}.
\end{equation*}
Likewise, the pointwise H\"older spectrum of $f$ may be obtained from \eqref{eq:scaling-Holder-spectrum}.
}
\end{example}

The remainder of this article is organized as follows. Section \ref{sec:notation} introduces notation and preliminary results, including a proof of Proposition \ref{prop:duality}. Proposition \ref{prop:zero-or-none} and Theorem \ref{thm:differentiability} are proved in Section \ref{sec:differentiability-proofs}, and Theorem \ref{thm:simple-Holder-spectrum} is proved in Section \ref{sec:simple-Holder}. Section \ref{sec:Holder-exponent-calculation} gives the computation of $\tilde{\alpha}_f(t)$ for every $t$, and Section \ref{sec:Holder-proofs} contains proofs of Theorems \ref{thm:Hausdorff-dimension} and \ref{thm:Holder-spectrum}. Section \ref{sec:multifractal} applies the main results to the multifractal spectrum of self-similar measures on $\RR$, and Section \ref{sec:CMT} shows how functions of the form \eqref{eq:our-self-affine-functions-intro} can be written as the composition of a monofractal function and an increasing function.

\section{Preliminaries} \label{sec:notation}

Let $\Omega:=\II^\NN$. For $\mathbf{i}=(i_1,i_2,\dots)\in\Omega$, the intersection
\begin{equation*}
\pi(\mathbf{i}):=\bigcap_{n=1}^\infty \phi_{i_1}\circ\dots\circ\phi_{i_n}([0,1])
\end{equation*}
consists of a single point. Call $\mathbf{i}$ a {\em coding} of $t\in[0,1]$ if $\pi(\mathbf{i})=\{t\}$. Each point $t\in[0,1]$ has at most two distinct codings; we shall call the lexicographically largest one the {\em standard coding} of $t$. We write $t\sim (i_1,i_2,\dots)$ to indicate that $(i_1,i_2,\dots)$ is the standard coding of $t$. In the special case when $c_i=1/m$ for each $i$ and $\boldsymbol{\eps}=(0,0,\dots,0)$, the standard coding of $t$ is just the expansion of $t$ in base $m$, except that we name the digits $1,\dots,m$ rather than $0,\dots,m-1$.

For $i_1,\dots,i_n\in\II$, let $I_{i_1,\dots,i_n}:=\phi_{i_1}\circ\dots\circ\phi_{i_n}([0,1])$. We will call $(i_1,\dots,i_n)$ the coding of $I_{i_1,\dots,i_n}$. For $n\in\NN$ and $t\in[0,1]$, let $I_n(t)$ denote the unique interval $I_{i_1,\dots,i_n}$ that contains $t$ and such that the standard coding of $t$ begins with $(i_1,\dots,i_n)$. For fixed $n$, we enumerate the intervals $I_{i_1,\dots,i_n}$ from left to right as $I_{n,j}: j=1,\dots,m^n$.

Let $\mathcal{T}_0$ denote the set of endpoints of the intervals $I_{n,j}$ ($n\in\NN$, $j=1,\dots,m^n$). These are the points that have two distinct codings.

Fix $t\sim (i_1,i_2,\dots) \in[0,1]$, and for $n\in\NN$, let $u_n$ and $v_n$ denote the left and right endpoints, respectively, of $I_n(t)$.
Thus, $u_n\leq t\leq v_n$, and $v_n-u_n=|I_n(t)|=c_{i_1}\cdots c_{i_n}$. Furthermore, let
\begin{gather*}
k_i(n;t):=\#\{j\leq n: i_j=i\},\\
d_i(n;t):=\frac{k_i(n;t)}{n},
\end{gather*}
for $n\in\NN$ and $i=0,1,\dots,m-1$, and define
\begin{equation*}
d_i(t):=\lim_{n\to\infty} d_i(n;t),
\end{equation*}
provided the limit exists. Thus, $d_i(t)$ is the frequency of the ``digit" $i$ in the standard coding of $t$. Since
\begin{equation*}
f(v_n)-f(u_n)=\pm\big(S_{i_1}\circ \dots \circ S_{i_n}(\mathbf{b})-S_{i_1}\circ \dots \circ S_{i_n}(\mathbf{a})\big),
\end{equation*}
we have
\begin{equation*}
|f(v_n)-f(u_n)|=|S_{i_1}\circ\dots\circ S_{i_n}(\mathbf{b}-\mathbf{a})|=\lambda_{i_1}\cdots \lambda_{i_n}.
\end{equation*}
This gives, for $\alpha>0$, the useful expression
\begin{equation}
\frac{|f(v_n)-f(u_n)|}{(v_n-u_n)^\alpha}=\prod_{i=1}^{m}\left(\frac{\lambda_i}{c_i^\alpha}\right)^{k_i(n;t)}
=\left(\prod_{i=1}^{m}\left(\frac{\lambda_i}{c_i^\alpha}\right)^{d_i(n;t)}\right)^n,
\label{eq:dq-product-expression}
\end{equation}
which (crucially!) does not depend on the signature $\boldsymbol{\eps}$.

An important tool in this paper is the following generalization of Eggleston's theorem \cite{Eggleston}, due to Li and Dekking (see \cite{Li-Dekking}, Theorem 1 and eq. (35) on p.~198):
\begin{equation}
\dim_H\{t\in(0,1): d_i(t)=p_i, i=1,\dots,m\}=H(p_1,\dots,p_m),
\label{eq:general-Eggleston}
\end{equation}
where $H$ was defined in \eqref{eq:h-definition}. Generalizing Eggleston's theorem in a different direction, Barreira et al.~\cite{BSS} proved, for the special case when $c_i=1/m$ for each $i$ and $\boldsymbol\eps=(0,0,\dots,0)$, that
\begin{align}
\begin{split}
\dim_H&\left\{t: \lim_{n\to\infty}\sum_{i=1}^{m}\beta_i d_i(n;t)=\alpha\right\}\\
&=\max\left\{H(\mathbf{p}): \mathbf{p}=(p_1,\dots,p_{m})\in\Delta_m, \sum_{i=1}^{m}\beta_i p_i=\alpha\right\}
\end{split}
\label{eq:Barreira-result}
\end{align}
for real numbers $\alpha,\beta_1,\dots,\beta_{m}$. In Section \ref{sec:Holder-exponent-calculation}, we will develop an expression for the nondirectional H\"older exponent $\tilde{\alpha}_f(t)$ which similarly involves a linear combination of the partial densities $d_i(n;t)$. But there is also a correction term, necessary to deal with points $t$ with exceptionally long strings of $1$'s or $m$'s in their codings. Thus, we will need a further extension of \eqref{eq:Barreira-result}, proved in Proposition \ref{prop:Holder-halves}.

We end this section with a proof of Proposition \ref{prop:duality}.



\begin{proof}[Proof of Proposition \ref{prop:duality}]
Let $\eta(\alpha)$ denote the expression on the right hand side of \eqref{eq:duality}. Assume initially that $\alpha\in(\alpha_{\min},\alpha_{\max})$.
We first show that $\beta^*(\alpha)\leq \eta(\alpha)$. Since $\lim_{q\to\pm\infty}(\alpha q+\beta(q))=\infty$, there is a (unique) value $q^*$ of $q$ that minimizes $\alpha q+\beta(q)$. Differentiating implicitly in \eqref{eq:scaling-equation} and setting $\beta'(q^*)=-\alpha$ yields
\begin{equation}
\sum_{i\in\II_+}\lambda_i^{q^*}c_i^{\beta(q^*)}(\log\lambda_i-\alpha\log c_i)=0.
\label{eq:q-min-equation}
\end{equation}
Set $p_i=\lambda_i^{q^*}c_i^{\beta(q^*)}$ for $i\in\II_+$, and $p_i=0$ for $i\in\II_0$. Then by \eqref{eq:q-min-equation}, $\mathbf{p}=(p_1,\dots,p_m)$ satisfies the constraints in \eqref{eq:duality}, and
\begin{align*}
\sum_{i=1}^m p_i\log p_i&=\sum_{i\in\II_+} \lambda_i^{q^*}c_i^{\beta(q^*)}\left\{q^*\log\lambda_i+\beta(q^*)\log c_i\right\}\\
&=\beta^*(\alpha)\sum_{i\in\II_+}\lambda_i^{q^*}c_i^{\beta(q^*)}\log c_i=\beta^*(\alpha)\sum_{i=1}^m p_i\log c_i.
\end{align*}
Hence, $\beta^*(\alpha)=H(p_1,\dots,p_m)\leq \eta(\alpha)$.

Conversely, let $\mathbf{p}=(p_1,\dots,p_m)\in\Delta_m^0$ such that $\sum_{i\in\II_+}p_i(\log\lambda_i-\alpha\log c_i)=0$; we must show that $H(\mathbf{p})\leq \alpha q+\beta(q)$ for each $q\in\RR$. By continuity of $H$, it is enough to show this when $p_i>0$ for each $i\in\II_+$. Since $\beta(q)$ is decreasing in $q$, we need to show in view of \eqref{eq:scaling-equation} that
\begin{equation}
\sum_{i\in\II_+}\lambda_i^q c_i^{H(\mathbf{p})-\alpha q}\geq 1.
\label{eq:sum-lower-bound}
\end{equation}
Using the concavity of $\log x$, we have (with all summations over $i\in\II_+$)
\begin{align*}
\log \sum\lambda_i^q c_i^{H(\mathbf{p})-\alpha q} &= \log \sum p_i\left(\frac{\lambda_i^q}{p_i}c_i^{H(\mathbf{p})-\alpha q}\right)\\
&\geq \sum p_i\log\left(\frac{\lambda_i^q}{p_i}c_i^{H(\mathbf{p})-\alpha q}\right)\\
&=\sum p_i\left\{q\log\lambda_i-\log p_i+\big(H(\mathbf{p})-\alpha q\big)\log c_i\right\}\\
&=q\sum p_i(\log\lambda_i-\alpha\log c_i)+\sum p_i\left\{H(\mathbf{p})\log c_i-\log p_i\right\}\\
&=0,
\end{align*}
since the last summation vanishes by definition of $H(\mathbf{p})$. Exponentiating gives \eqref{eq:sum-lower-bound}.
Thus, $\beta^*(\alpha)\geq \eta(\alpha)$.

For $\alpha\in\{\alpha_{\min},\alpha_{\max}\}$, \eqref{eq:duality} now follows from the continuity of $\beta^*(\alpha)$ and $\eta(\alpha)$ in $[\alpha_{\min},\alpha_{\max}]$. The former is well known; the latter is a consequence of the continuity of $H(\mathbf{p})$ with respect to $\mathbf{p}$ and the continuity of $\sum_{i\in\mathcal{I_+}}p_i(\log\lambda_i-\alpha\log c_i)$ with respect to $\alpha$ and $\mathbf{p}$.
\end{proof}

\section{Proofs of Proposition \ref{prop:zero-or-none} and Theorem \ref{thm:differentiability}} \label{sec:differentiability-proofs}

In this and later sections, let
\begin{equation*}
c_{\min}:=\min_{i\in\II}c_i, \qquad c_{\max}:=\max_{i\in\II}c_i.
\end{equation*}
We begin with a useful lemma, whose easy proof is left to the reader.

\begin{lemma} \label{lem:generalized-chord-lemma}
Let $t\in(0,1)$, and suppose $f'(t)$ exists and is finite. If $(s_n)_n$ and $(t_n)_n$ are any two sequences converging to $t$ such that $(t_n-t)/(t_n-s_n)$ is bounded, then
\begin{equation*}
\frac{f(t_n)-f(s_n)}{t_n-s_n}\to f'(t).
\end{equation*}
\end{lemma}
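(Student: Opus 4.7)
The plan is to prove this by a simple algebraic identity that expresses the chord slope $[f(t_n)-f(s_n)]/[t_n-s_n]$ as an affine combination of the two one-sided difference quotients $[f(t_n)-f(t)]/(t_n-t)$ and $[f(s_n)-f(t)]/(s_n-t)$, both of which tend to $f'(t)$ by hypothesis. The boundedness assumption controls the coefficients of this combination so that the error vanishes.

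Concretely, after discarding (finitely or infinitely many) indices where $s_n=t$ or $t_n=t$ — which are easy corner cases handled by the definition of $f'(t)$ directly — I would set
\begin{equation*}
\lambda_n:=\frac{t_n-t}{t_n-s_n}, \qquad \mu_n:=\frac{s_n-t}{t_n-s_n},
\end{equation*}
and observe the identity $\lambda_n-\mu_n=1$. Writing $f(t_n)-f(s_n)=[f(t_n)-f(t)]-[f(s_n)-f(t)]$ and dividing through by $t_n-s_n$ yields
\begin{equation*}
\frac{f(t_n)-f(s_n)}{t_n-s_n}=\lambda_n\cdot\frac{f(t_n)-f(t)}{t_n-t}-\mu_n\cdot\frac{f(s_n)-f(t)}{s_n-t}.
\end{equation*}
Subtracting $f'(t)=(\lambda_n-\mu_n)f'(t)$ from both sides gives
\begin{equation*}
\frac{f(t_n)-f(s_n)}{t_n-s_n}-f'(t)=\lambda_n\Big(\tfrac{f(t_n)-f(t)}{t_n-t}-f'(t)\Big)-\mu_n\Big(\tfrac{f(s_n)-f(t)}{s_n-t}-f'(t)\Big).
\end{equation*}

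The hypothesis that $\lambda_n$ is bounded, combined with $\mu_n=\lambda_n-1$, makes both $\lambda_n$ and $\mu_n$ bounded; and since $t_n\to t$ and $s_n\to t$, the two bracketed differences tend to $0$ by definition of $f'(t)$. The right-hand side therefore tends to $0$, proving the claim.

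There is essentially no obstacle here — the only minor nuisance is bookkeeping for indices $n$ at which $t_n=t$ or $s_n=t$ (so that the rewritten difference quotients are literally defined). In the first case the chord slope already equals $[f(s_n)-f(t)]/(s_n-t)$, and in the second it equals $[f(t_n)-f(t)]/(t_n-t)$; both converge to $f'(t)$ directly, so they contribute no issue to the limit. Thus the lemma reduces to the one-line identity above.
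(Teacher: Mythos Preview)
Your argument is correct and is exactly the standard proof of this elementary chord lemma. The paper itself omits the proof entirely, stating that the ``easy proof is left to the reader,'' so there is nothing further to compare.
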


\begin{proof}[Proof of Proposition \ref{prop:zero-or-none}]
Assume $f'(t)$ exists but $f'(t)\neq 0$. Then, by Lemma \ref{lem:generalized-chord-lemma},
\begin{equation*}
\frac{|f(v_n)-f(u_n)|}{v_n-u_n}\to |f'(t)|>0.
\end{equation*}
Since $v_{n+1}-u_{n+1}=c_{i_{n+1}}(v_n-u_n)$ and $|f(v_{n+1})-f(u_{n+1})|=\lambda_{i_{n+1}}|f(v_n)-f(u_n)|$, it follows that
\begin{equation*}
\frac{\lambda_{i_{n+1}}}{c_{i_{n+1}}}\to 1.
\end{equation*}
This is possible only if $\lambda_i=c_i$ for some $i$, and then $\lambda_{i_n}=c_{i_n}$ for all sufficiently large $n$. Suppose this is the case. Fix $k\in\II$ such that $\lambda_k\neq c_{k}$. For each $n$, let $s_n$ and $t_n$ be the left and right endpoints, respectively, of the interval $I_{i_1,\dots,i_{n-1},k}$. Then
\begin{equation*}
\frac{t_n-t}{t_n-s_n}\leq \frac{v_{n-1}-u_{n-1}}{t_n-s_n}=\frac{1}{c_{k}},
\end{equation*}
so Lemma \ref{lem:generalized-chord-lemma} implies
\begin{equation*}
\frac{|f(t_n)-f(s_n)|}{t_n-s_n}\to |f'(t)|.
\end{equation*}
But this is impossible, since
\begin{equation*}
\frac{|f(t_n)-f(s_n)|}{t_n-s_n}=\frac{|f(v_n)-f(u_n)|}{v_n-u_n} \cdot \frac{\lambda_{k}c_{i_n}}{\lambda_{i_n}c_{k}},
\end{equation*}
and, for all large enough $n$, the last fraction on the right is constant $\lambda_k/c_k\neq 1$.
\end{proof}

The following lemma is a direct generalization of \cite[Lemma 3]{Lax}.

\begin{lemma} \label{lem:Lax-general}
Let $M_n:=\inf\{j>n:i_j=i_n\}$ for $n\in\NN$. If $d_i(t)$ exists and $d_i(t)>0$ for each $i\in\{1,\dots,m\}$, then $M_n=n+o(n)$.
\end{lemma}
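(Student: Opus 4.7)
The plan is a short proof by contradiction. Suppose $M_n-n$ is not $o(n)$. Then there exist $\delta>0$ and a subsequence $n_k\to\infty$ with $M_{n_k}-n_k\geq \delta n_k$. Since the alphabet $\II=\{1,\dots,m\}$ is finite, a pigeonhole argument allows us to pass to a further subsequence (which we continue to denote $n_k$) along which $i_{n_k}=i$ for one fixed digit $i\in\II$.

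Next, I would exploit the defining property of $M_n$. By definition, for every integer $j$ with $n_k<j<M_{n_k}$ we have $i_j\neq i$, so the count $k_i(\cdot;t)$ does not increase over this range; in particular
\[
k_i(M_{n_k}-1;t)=k_i(n_k;t).
\]
Dividing by $M_{n_k}-1$ and using $M_{n_k}-1\geq (1+\delta)n_k-1$, this gives the key estimate
\[
d_i(M_{n_k}-1;t)=\frac{k_i(n_k;t)}{M_{n_k}-1}\leq d_i(n_k;t)\cdot\frac{n_k}{(1+\delta)n_k-1}.
\]

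Finally, I would take $k\to\infty$ and invoke the hypothesis that $d_i(n;t)\to d_i(t)$ with $d_i(t)>0$. The left-hand side tends to $d_i(t)$ (since $M_{n_k}-1\to\infty$), while the right-hand side tends to $d_i(t)/(1+\delta)$. Thus $d_i(t)\leq d_i(t)/(1+\delta)$, which combined with $d_i(t)>0$ forces $\delta\leq 0$, a contradiction. Hence $M_n-n=o(n)$, as claimed.

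This argument is essentially self-contained and the only subtlety is the pigeonhole reduction to a single digit $i$; the positivity $d_i(t)>0$ is used exactly once at the very end, and the hypothesis that $d_j(t)$ exists for every $j$ is not actually needed beyond $j=i$, though having all of them positive is what guarantees the pigeonhole step picks a digit to which the density hypothesis applies. I do not anticipate any serious obstacle.
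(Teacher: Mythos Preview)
Your proof is correct and follows essentially the same approach as the paper: both exploit that the count $k_i(\cdot;t)$ of the digit $i=i_n$ is unchanged (or increases by exactly~1) over the gap from $n$ to $M_n$, and then use $d_i(t)>0$ to force $M_n/n\to 1$. The only cosmetic difference is that the paper argues directly (writing $\frac{k_i(M_n;t)}{M_n}\cdot\frac{M_n}{n}=\frac{k_i(n;t)}{n}+\frac{1}{n}$ and letting $n\to\infty$ along $\{n:i_n=i\}$), whereas you frame it as a contradiction after extracting a subsequence.
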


\begin{proof}
Suppose $i_n=1$. Then $k_1(M_n;t)=k_1(n;t)+1$, so
\begin{equation*}
\frac{k_1(M_n;t)}{M_n}\cdot\frac{M_n}{n}=\frac{k_1(n;t)}{n}+\frac{1}{n}.
\end{equation*}
Since $d_1(t)$ exists and is strictly positive, it follows that $M_n/n\to 1$ along the subsequence $\{n:i_n=1\}$. Similarly considering the other digits yields $M_n=n+o(n)$.
\end{proof}

\begin{lemma} \label{lem:when-derivative-zero}
Suppose that $d_i(t)>0$ for $i=1,\dots,m$, and that
\begin{equation}
\sum_{i=1}^{m}d_i(t)\log(\lambda_i/c_i)<0.
\label{eq:sufficient-density-inequality}
\end{equation}
Then $f'(t)=0$.
\end{lemma}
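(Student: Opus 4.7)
My plan is to establish $f'(t) = 0$ by showing that for any sequence $s \to t$, the difference quotient $(f(s) - f(t))/(s-t)$ tends to zero. Set $\delta := -\sum_{i=1}^m d_i(t)\log(\lambda_i/c_i) > 0$, and note that I may assume $\lambda_i > 0$ for every $i$, since otherwise $d_j(t) > 0$ for some $j$ with $\lambda_j = 0$ forces $f$ to be locally constant around $t$ and the conclusion is immediate. Taking $\alpha = 1$ in \eqref{eq:dq-product-expression} and using $d_i(n;t) \to d_i(t)$ gives
$$\frac{|f(v_n) - f(u_n)|}{v_n - u_n} = \exp\!\left(n\sum_{i=1}^{m} d_i(n;t)\log(\lambda_i/c_i)\right) \le e^{-n\delta/2}$$
for all sufficiently large $n$. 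This handles the difference quotient along the endpoint sequences $(u_n)$ and $(v_n)$, but the real task is to replace the endpoints by arbitrary nearby points.

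Without loss of generality assume $s > t$, and let $n = n(s)$ be the smallest integer with $s \notin I_n(t)$, so $s > v_n$, $s \in I_{n-1}(t)$, and $n(s) \to \infty$ as $s \to t$. Since $f\rest I_{n-1}(t)$ is a similarity image of $f$ scaled by $\lambda_{i_1}\cdots\lambda_{i_{n-1}}$, the oscillation estimate yields
$$|f(s) - f(t)| \le \diam(\Gamma) \cdot \lambda_{i_1}\cdots\lambda_{i_{n-1}}.$$
For a matching lower bound on $|s-t|$, I use $|s-t| > v_n - t$ and exploit the subinterval structure of the coding of $t$. Let $k = k(n) \ge 0$ be the smallest integer such that $i_{n+k+1}$ is not the \emph{rightmost} digit at level $n+k$, i.e., the digit labelling the rightmost of the $m$ subintervals of $I_{n+k}(t)$. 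Then $v_n = v_{n+1} = \cdots = v_{n+k}$, while $I_{n+k+1}(t)$ has at least one whole subinterval of $I_{n+k}(t)$ to its right; consequently $v_{n+k} - v_{n+k+1} \ge c_{\min}(v_{n+k} - u_{n+k})$, and hence
$$v_n - t \ge v_{n+k} - v_{n+k+1} \ge c_{\min}^{k+1} c_{i_1}\cdots c_{i_n}.$$

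Combining the two estimates,
$$\frac{|f(s) - f(t)|}{|s - t|} \le \frac{\diam(\Gamma)}{c_{\min}^{k+2}} \prod_{j=1}^{n-1}\frac{\lambda_{i_j}}{c_{i_j}} \le \frac{\diam(\Gamma)}{c_{\min}^{k+2}}\, e^{-(n-1)\delta/2},$$
so what remains is to show that $k(n) = o(n)$, which forces the right-hand side to tend to zero. This is where the hypothesis $d_i(t) > 0$ for every $i$ enters: for $m \ge 3$, any middle digit $j \in \{2,\ldots,m-1\}$ is non-rightmost regardless of the orientation of its host interval, so the positions at which $i_{n+1}$ is non-rightmost have density at least $\sum_{j=2}^{m-1} d_j(t) > 0$; applying Lemma \ref{lem:Lax-general} to such a $j$ bounds the gap between successive non-rightmost positions by $o(n)$, giving $k(n) = o(n)$. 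The main obstacle is the case $m = 2$, in which ``rightmost'' alternates with the cumulative orientation parity $\eps_{i_1} + \cdots + \eps_{i_n}\pmod 2$; one must check that the positive density of both digits still produces a positive density of non-rightmost positions, after which Lemma \ref{lem:Lax-general} again supplies $k(n) = o(n)$ and the argument concludes as above.
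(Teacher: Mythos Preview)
Your overall strategy mirrors the paper's direct argument (bound $|f(s)-f(t)|$ by the oscillation of $f$ over $I_{n-1}(t)$, bound $|s-t|$ below via the gap to the next ``non-rightmost'' digit, then invoke a Lax-type $o(n)$ gap estimate). The paper itself only carries this out for the trivial signature $\boldsymbol\eps=(0,\dots,0)$ and defers the general case to Corollary~\ref{cor:holder-exponent-densities}. Your extension to $m\ge 3$ is correct: a middle digit $j\in\{2,\dots,m-1\}$ is never rightmost, and since $d_j(t)>0$ the gap to its next occurrence is $o(n)$, which bounds $k(n)$.

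However, your sketched handling of $m=2$ contains a genuine gap. The assertion that ``positive density of both digits still produces a positive density of non-rightmost positions'' is \emph{false} for the signature $\eps_1=\eps_2=1$. In that case the orientation parity at level $n$ is simply $n\bmod 2$, so the rightmost digit at level $n$ is $2$ if $n$ is even and $1$ if $n$ is odd; thus $i_{k}$ is rightmost precisely when it follows the alternating pattern $2,1,2,1,\dots$. Now take the coding that follows this alternating pattern except at the sparse set $E=\{2,4,8,16,\dots\}$ of even positions, where we set $i_k=2$ instead of $1$. Then $d_1(t)=d_2(t)=\tfrac12$ (the perturbation has density zero), yet the non-rightmost positions are exactly $E$, which has density zero and gaps of order $n$. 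Hence $k(n)$ is \emph{not} $o(n)$ along a subsequence, and your bound $\frac{|f(s)-f(t)|}{|s-t|}\le \diam(\Gamma)\,c_{\min}^{-(k+2)}e^{-(n-1)\delta/2}$ fails to force the difference quotient to zero. (The lemma is still true for such $t$---the paper obtains it via Corollary~\ref{cor:holder-exponent-densities}, whose proof for $\max\{\eps_1,\eps_m\}=1$ exploits Lemma~\ref{lem:boundedness} to compare adjacent level-$n$ intervals directly rather than descending to level $n+k$.) So for general signatures with $m=2$ your approach needs a different lower bound on $|s-t|$; the ``rightmost-run'' estimate alone is insufficient.
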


\begin{proof}
We give a short proof, based on Lax's argument \cite{Lax}, for the signature $\boldsymbol{\eps}=(0,0,\dots,0)$. For the general case, the lemma will follow from Corollary \ref{cor:holder-exponent-densities}.

Let $t'\neq t$, and let $n$ be the largest integer such that $t'\in I_n(t)$. Then
\begin{equation*}
|f(t')-f(t)|\leq K|f(v_n)-f(u_n)|,
\end{equation*}
where $K:=2\max_{0\leq t\leq 1}|f(t)|$; and
$|t'-t|\geq c_{\min}^{M_{n+1}-n}c_{i_1}\cdots c_{i_n}$,
with $M_n$ defined as in Lemma \ref{lem:Lax-general}. Thus, using \eqref{eq:dq-product-expression},
\begin{equation*}
\frac{|f(t')-f(t)|}{|t'-t|}\leq K c_{\min}^{-(M_{n+1}-n)}\left[\prod_{i=1}^{m}\left(\frac{\lambda_i}{c_i}\right)^{d_i(n;t)}\right]^n\to 0,
\end{equation*}
since $\prod(\lambda_i/c_i)^{d_i(n;t)}\to \prod(\lambda_i/c_i)^{d_i(t)}<1$ by \eqref{eq:sufficient-density-inequality}, and $c_{\min}^{-(M_{n+1}-n)}$ grows subexponentially by Lemma \ref{lem:Lax-general}. 
\end{proof}

\begin{proof}[Proof of Theorem \ref{thm:differentiability}]
Statement (i) follows immediately from Proposition \ref{prop:zero-or-none} and Lemma \ref{lem:generalized-chord-lemma}, since
\begin{equation*}
\frac{|f(v_n)-f(u_n)|}{v_n-u_n}=\frac{\lambda_{i_1}\cdots \lambda_{i_n}}{c_{i_1}\cdots c_{i_n}}\geq 1.
\end{equation*}

To prove (ii), we assume first that $\sum_{i=1}^{m}c_i\log(\lambda_i/c_i)>0$. By the strong law of large numbers, $d_i(n;t)\to c_i$ for almost every $t$ and $i=1,\dots,m$, and so
\begin{equation*}
\lim_{n\to\infty}\prod_{i=1}^{m}\left(\frac{\lambda_i}{c_i}\right)^{d_i(n;t)}=\prod_{i=1}^{m}\left(\frac{\lambda_i}{c_i}\right)^{c_i}>1
\end{equation*}
for almost every $t$. Thus, \eqref{eq:dq-product-expression} gives
\begin{equation*}
\frac{|f(v_n)-f(u_n)|}{v_n-u_n}\to \infty
\end{equation*}
for almost all $t$, and hence, $f$ is differentiable almost nowhere.

The case when $\sum_{i=1}^{m}c_i\log(\lambda_i/c_i)=0$ needs a separate argument. In this case, we view the numbers $k_i(n;t)$ as random variables on the Lebesgue probability space $[0,1]$ with the Lebesgue (or Borel) $\sigma$-algebra and Lebesgue measure. Since the ``digits" $i_1,i_2,\dots$ in the coding of $t$ are independent and identically distributed, the sums
\begin{equation*}
\sum_{i=1}^{m}k_i(n;t)\log(\lambda_i/c_i), \qquad n=0,1,2,\dots
\end{equation*}
follow a random walk with steps chosen randomly from the set $\{\log(\lambda_i/c_i): i=1,\dots,m\}$, in which the expected step size is $\sum_{i=1}^{m}c_i\log(\lambda_i/c_i)=0$. Then, for example, the law of the iterated logarithm implies that for almost all $t$,
\begin{equation*}
\sum_{i=1}^{m}k_i(n;t)\log(\lambda_i/c_i)>0 \qquad\mbox{for infinitely many $n$}.
\end{equation*}
Exponentiating and using \eqref{eq:dq-product-expression}, it follows that $f$ is differentiable almost nowhere.

The claim that $f'(t)=0$ at uncountably many $t$ if $\lambda_i<c_i$ for some $i$ will follow once we prove Theorem \ref{thm:Hausdorff-dimension}. 

Finally, the first part of (iii) follows from Lemma \ref{lem:when-derivative-zero}, since $d_i(t)=c_i$ for $i=1,\dots,m$ and almost every $t\in(0,1)$, so the hypothesis of (iii) implies \eqref{eq:sufficient-density-inequality} for almost all $t$. The second part of (iii) will follow once we prove Theorem \ref{thm:Hausdorff-dimension}.
\end{proof}

\section{Proof of Theorem \ref{thm:simple-Holder-spectrum}} \label{sec:simple-Holder}

In this section we assume the hypotheses of Theorem \ref{thm:simple-Holder-spectrum}: $c_i=1/m$ for each $i$, and $\boldsymbol{\eps}=(0,0,\dots,0)$. Note that $\mathcal{T}_0$ is then simply the set of all $m$-adic rational numbers in $[0,1]$.

\begin{proof}[Proof of Theorem \ref{thm:simple-Holder-spectrum}]
Fix $t_0\in[0,1]$, and assume $f\in C^\alpha(t_0)$ with $\alpha>1$. Let $N$ be the greatest integer strictly less than $\alpha$. Thus, there are polynomials $P_1,\dots,P_d$ of degree at most $N$ and a constant $C>0$ such that
\begin{equation}
\big|\big(f_1(t)-P_1(t-t_0),\dots,f_d(t)-P_d(t-t_0)\big)\big|\leq C|t-t_0|^\alpha \quad \mbox{for all $t\in[0,1]$},
\label{eq:multi-dim-Holder-property}
\end{equation}
where we write $f=(f_1,\dots,f_d)$. We need to show that $P_i(t)\equiv f_i(t_0)$ for $i=1,\dots,d$. Aiming for a contradiction, assume that this is not the case. Write
\begin{equation*}
P_i(t)=f_i(t_0)+a_{i,1}t+a_{i,2}t^2+\dots+a_{i,N}t^{N}, \qquad i=1,\dots,d.
\end{equation*}
For each $i$, set $l_i=\infty$ if $P_i$ is constant, and otherwise, set $l_i:=\min\{j\geq 1:a_{i,j}\neq 0\}$. Note that at least one $l_i$ is finite; let $l:=\min_{1\leq i\leq d}l_i$. We can divide by $(t-t_0)^l$ in \eqref{eq:multi-dim-Holder-property} to obtain
\begin{equation}
\lim_{t\to t_0}\frac{f(t)-f(t_0)}{(t-t_0)^l}=(a_{1,l},\dots,a_{d,l})=:\mathbf{x}\neq 0,
\label{eq:l-derivative}
\end{equation}
since
\begin{equation*}
\left|\frac{f_i(t)-f_i(t_0)}{(t-t_0)^l}-a_{i,l}-a_{i,l+1}(t-t_0)-\dots-a_{i,N}(t-t_0)^{N-l}\right|\leq C|t-t_0|^{\alpha-l}\to 0
\end{equation*}
for $i=1,\dots,d$. Observe that \eqref{eq:l-derivative} is a rather strong statement. For instance, if $l=1$ it says that $f$ has a well-defined nonzero derivative at $t_0$, which is impossible in view of Proposition \ref{prop:zero-or-none}. Thus, we must have $l\geq 2$.

\bigskip
{\em Case 1.} Assume first that $t_0\in\mathcal{T}_0$, say $t_0=k/m^n$. It will be sufficient to consider $t>t_0$. Since the graph of $f$ on the interval $[k/m^n,(k+1)/m^n]$ is an affine copy of the full graph of $f$, we can and do assume without loss of generality that $t_0=0$. For each $i\in\NN$, \eqref{eq:l-derivative} gives
\begin{equation*}
\lim_{n\to\infty}\frac{f(i/m^n)}{(i/m^n)^l}=\mathbf{x},
\end{equation*}
so that
\begin{align*}
m^{nl}\left[f\left(\frac{i+1}{m^n}\right)-f\left(\frac{i}{m^n}\right)\right]
&=\displaystyle\frac{\displaystyle f\left(\frac{i+1}{m^n}\right)}{\left(\displaystyle\frac{i+1}{m^n}\right)^l}\cdot(i+1)^l-\frac{\displaystyle f\left(\frac{i}{m^n}\right)}{\left(\displaystyle\frac{i}{m^n}\right)^l}\cdot i^l\\
&\to \{(i+1)^l-i^l\}\mathbf{x}.
\end{align*}
Letting
\begin{equation*}
\delta_{n,i}:=\left|f\left(\frac{i+1}{m^n}\right)-f\left(\frac{i}{m^n}\right)\right|,
\end{equation*}
for $n\in\NN$ and $i=0,1,\dots,m^n-1$, it follows that
\begin{equation}
m^{nl}\delta_{n,i}\to \{(i+1)^l-i^l\}|\mathbf{x}|, \qquad i\in\NN.
\label{eq:m-adic-difference-expression}
\end{equation}

On the other hand, for $i<m$ it follows from \eqref{eq:our-self-affine-functions-intro} that $\delta_{n,i}=\lambda_1^{n-1}\lambda_{i+1}$.
In particular, setting $i=1$ in \eqref{eq:m-adic-difference-expression} gives
\begin{equation*}
m^{nl}\lambda_1^{n-1}\lambda_2\to (2^l-1)|\mathbf{x}|.
\end{equation*}
But similarly we have $\delta_{n,m}=\lambda_1^{n-1}\lambda_2$, 
so setting $i=m$ in \eqref{eq:m-adic-difference-expression} we obtain
\begin{equation*}
m^{nl}\lambda_1^{n-1}\lambda_2\to \{(m+1)^l-m^l\}|\mathbf{x}|.
\end{equation*}
Thus, $(m+1)^l-m^l=2^l-1$. But this is impossible, since $m\geq 2$ and the function $x\mapsto x^l$ is strictly convex on $(0,\infty)$ for $l\geq 2$.

\bigskip
{\em Case 2.} Assume now that $t_0\not\in\mathcal{T}_0$. We initially assume also that $\lambda_i>0$ for each $i$. Note that $\delta_{n,k}$ is a product of some combination of the $\lambda_i$, and is therefore nonzero. Define the set
\begin{equation*}
\mathcal{R}:=\left\{\frac{\delta_{n,k-1}}{\delta_{n,k}}\,\bigg|\, n\in\NN,\ k=1,2,\dots,m^n\right\}.
\end{equation*}
It is easy to see that
\begin{equation*}
\mathcal{R}=\left\{\frac{\lambda_{i-1}}{\lambda_i}\left(\frac{\lambda_{m}}{\lambda_1}\right)^{n-1}\,\bigg|\, n\in\NN,\ i=2,\dots,m \right\},
\end{equation*}
and therefore $\mathcal{R}$ has no limit points in $(0,\infty)$.

Define the map $T:[0,1)\to[0,1)$ by $T(t):=mt \mod 1$, and denote by $T^n$ the $n$th iterate of $T$. Since $t_0$ is not $m$-adic rational, there is a number $\tau\in(0,1)$ and a subsequence $(n_\nu)$ of $\NN$ such that $T^{n_\nu}(t_0)\to \tau$. To avoid notational clutter, we shall for the remainder of the proof suppress the index $\nu$ and simply write $n$ instead of $n_\nu$. By continuity of $f$, 
\begin{equation*}
f(T^n(t_0))\to f(\tau).
\end{equation*}
Here and in what follows, convergence takes place along the subsequence $(n_\nu)$ as $\nu\to\infty$.
Assume initially that $f(\tau)\neq 0$. Write $I_n(t_0)=[u_n,v_n]$. Then $m^n(t_0-u_n)=T^n(t_0)$, so
\begin{equation*}
\frac{|f(t_0)-f(u_n)|}{(t_0-u_n)^l}=\frac{m^{nl}\lambda_{i_1}\cdots\lambda_{i_n}}{(T^n(t_0))^l}|f(T^n(t_0))|.
\end{equation*}
Thus, \eqref{eq:l-derivative} implies
\begin{equation}
m^{nl}\lambda_{i_1}\cdots\lambda_{i_n}\to \frac{\tau^l|\mathbf{x}|}{|f(\tau)|}>0.
\label{eq:own-interval-limit}
\end{equation}
Next, for $j=1,2,\dots$ we have
\begin{align*}
m^{nl}\big[f(u_n&-(j-1)m^{-n})-f(u_n-jm^{-n})\big]\\
&=(-1)^l\left[(T^n(t_0)+j-1)^l\cdot\frac{f(u_n-(j-1)m^{-n})-f(t_0)}{(u_n-(j-1)m^{-n}-t_0)^l}\right.\\
&\qquad\qquad\qquad\qquad\qquad \left.-(T^n(t_0)+j)^l\cdot\frac{f(u_n-jm^{-n})-f(t_0)}{(u_n-jm^{-n}-t_0)^l}\right]\\
&\to (-1)^{l-1}\big\{(\tau+j)^l-(\tau+j-1)^l\big\}\mathbf{x},
\end{align*}
and so
\begin{equation}
m^{nl}\big|f(u_n-(j-1)m^{-n})-f(u_n-jm^{-n})\big| \to \big\{(\tau+j)^l-(\tau+j-1)^l\big\}|\mathbf{x}|.
\label{eq:intervals-on-left-limit}
\end{equation}
Write $u_n=k_n/m^n$. Then
\begin{equation}
\big|f(u_n-(j-1)m^{-n})-f(u_n-jm^{-n})\big|=\lambda_{i_1}\cdots\lambda_{i_n}b_{n,1}\cdots b_{n,j},
\label{eq:intervals-on-left-norm}
\end{equation}
where
\begin{equation*}
b_{n,i}:=\frac{\delta_{n,k_n-i}}{\delta_{n,k_n-i+1}}, \qquad i=1,\dots,k_n.
\end{equation*}
Combining \eqref{eq:own-interval-limit}, \eqref{eq:intervals-on-left-limit} and \eqref{eq:intervals-on-left-norm}, we obtain
\begin{equation*}
b_{n,1}\cdots b_{n,j} \to \frac{(\tau+j)^l-(\tau+j-1)^l}{\tau^l} |f(\tau)|, \qquad j=1,2,\dots
\end{equation*}
and so 
\begin{equation*}
b_{n,j}=\frac{b_{n,1}\cdots b_{n,j}}{b_{n,1}\cdots b_{n,j-1}}\to \frac{(\tau+j)^l-(\tau+j-1)^l}{(\tau+j-1)^l-(\tau+j-2)^l}>1, \qquad j=2,3,\dots,
\end{equation*}
since $l\geq 2$ and the function $x\mapsto x^l$ is strictly convex on $(0,\infty)$. However, $b_{n,j}\in\mathcal{R}$, and since $\mathcal{R}$ does not have a limit point in $(0,\infty)$, it follows that (for each fixed $j$) $b_{n,j}$ is eventually constant, say 
\begin{equation*}
b_{n,j}\equiv b_j:=\frac{(\tau+j)^l-(\tau+j-1)^l}{(\tau+j-1)^l-(\tau+j-2)^l}, \qquad j=2,3,\dots. 
\end{equation*}
But then $b_j\in\mathcal{R}$, $b_j\neq 1$ and $\lim_{j\to\infty}b_j=1$, contradicting again that $\mathcal{R}$ does not have a limit point in $(0,\infty)$.

If $f(\tau)=0$, then instead of \eqref{eq:own-interval-limit} we have
$m^{nl}\lambda_{i_1}\cdots\lambda_{i_n}\to \infty$,
and so
\begin{equation*}
\frac{|f(v_n)-f(t_0)|}{(v_n-t_0)^l}=m^{nl}\lambda_{i_1}\cdots\lambda_{i_n} \frac{|\mathbf{b}-f(T^n(t_0))|}{(1-T^n(t_0))^l}\to\infty,
\end{equation*}
since $T^n(t_0)\to \tau\in(0,1)$ and $|\mathbf{b}-f(T^n(t_0))|\to |\mathbf{b}-f(\tau)|=|\mathbf{b}|=1$. But this contradicts \eqref{eq:l-derivative}.

It remains only to deal with the case when $\lambda_i=0$ for some $i$. In this case, there is for each $n\in\NN$ a number $j_n\leq m$ such that
\begin{equation*}
f(u_n-(j_n-1)m^{-n})-f(u_n-j_n m^{-n})=0.
\end{equation*}
We can then find a number $j\leq m$ such that $j_n=j$ for infinitely many $n$. But for this $j$, \eqref{eq:intervals-on-left-limit} is impossible, and we once again have a contradiction to \eqref{eq:l-derivative}. 
\end{proof}

\section{Calculation of $\tilde{\alpha}_f(t)$} \label{sec:Holder-exponent-calculation}

In this section we derive a precise (but somewhat technical) expression for $\tilde{\alpha}_f(t)$ in terms of the coding of $t$. Assume without loss of generality that 
\begin{equation}
\frac{\log\lambda_m}{\log c_m}\geq \frac{\log\lambda_1}{\log c_1}.
\label{eq:ratio-assumption}
\end{equation}
(If this does not hold, simply switch the roles of the digits $1$ and $m$ everywhere in what follows.) Let
\begin{equation}
K:=\begin{cases}
0 & \mbox{if $\lambda_m=0$},\\
\left(\frac{\log c_m}{\log c_1}\right)\log\lambda_1-\log\lambda_m & \mbox{if $\lambda_m>0$}.
\end{cases}
\label{eq:definition-of-K}
\end{equation}
Note that $K\geq 0$. For $t\sim (i_1,i_2,\dots)$, define the ``look back" run length
\begin{equation}
L_n(t):=\max\{j\leq n: i_{n-j+1}=\dots=i_{n-1}=i_n=m\},
\label{eq:run-length}
\end{equation}
or $L_n(t):=0$ if $i_n<m$. For $i\in\II$, let
\begin{equation*}
i_{\to}:=\begin{cases}
i+1 & \mbox{if $\eps_i=0$},\\
i-1 & \mbox{if $\eps_i=1$}.
\end{cases}
\end{equation*}
There are three essentially different cases to consider. We deal with the case $t\in\mathcal{T}_0$ separately, in Theorem \ref{thm:pointwise-Holder-at-endpoints}. If $t\not\in\KK_\phi$, then $f$ is constant on $I_n(t)$ for some $n$, and $\tilde{\alpha}_f(t)=\infty$. The critical case, addressed in Theorem \ref{thm:individual-Holder-exponents} below, is when $t\not\in\mathcal{T}_0$ and $t\in\KK_\phi$. We make the convention that $\log 0:=-\infty$ and $0\log 0:=0$.

\begin{theorem} \label{thm:individual-Holder-exponents}
Assume \eqref{eq:ratio-assumption}, and let $t\sim(i_1,i_2,\dots)\in\KK_\phi\backslash \mathcal{T}_0$.
\begin{enumerate}[(i)]
\item There is a unique number $\alpha_0:=\alpha_0(t)\in[\alpha_{\min},\alpha_{\max}]$ such that
\begin{equation}
\limsup_{n\to\infty}\left[\sum_{i=1}^m d_i(n;t)(\log\lambda_i-\alpha_0\log c_i)\right]=0,
\end{equation}
and if $\max\{\eps_1,\eps_m\}=1$, then $\tilde{\alpha}_f(t)=\alpha_0(t)$.
\item Suppose $\eps_1=\eps_m=0$. Let
\begin{equation*}
\chi_n(t):=\begin{cases}
1 & \mbox{if $\eps_i=\eps_{i_{\to}}$ and $i_{\to}\in\II_+$},\\
0 & \mbox{otherwise},
\end{cases}
\end{equation*}
where $i:=i_{n-L_n(t)}$. Then there is a unique number $\alpha_1:=\alpha_1(t)\in[\alpha_{\min},\alpha_{\max}]$ such that
\begin{equation}
\limsup_{n\to\infty}\left[\sum_{i=1}^m d_i(n;t)(\log\lambda_i-\alpha_1\log c_i)+K\chi_n(t)\frac{L_n(t)}{n}\right]=0.
\label{eq:general-Holder-limsup-equation}
\end{equation}
Moreover, $\alpha_1(t)\leq\alpha_0(t)$, and $\tilde{\alpha}_f(t)=\alpha_1(t)$.
\end{enumerate}
\end{theorem}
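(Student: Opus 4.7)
The strategy is to use the self-similarity formula \eqref{eq:dq-product-expression} to compare $|f(s)-f(t)|$ with $|s-t|^\alpha$ along carefully chosen sequences $s\to t$, and identify the critical exponent at which the ratio $|f(s)-f(t)|/|s-t|^\alpha$ ceases to remain bounded. Existence and uniqueness of $\alpha_0$ (and similarly $\alpha_1$) are an easy preliminary: for each $n$, the map
\[
\alpha\mapsto g_n(\alpha):=\sum_{i=1}^m d_i(n;t)\bigl(\log\lambda_i-\alpha\log c_i\bigr)
\]
is affine with slope in $[-\log c_{\max},-\log c_{\min}]$, so $\bar g(\alpha):=\limsup_n g_n(\alpha)$ is strictly increasing and Lipschitz. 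Because $t\in\KK_\phi\backslash\mathcal{T}_0$ forces $i_j\in\II_+$ for all large $j$, one verifies $g_n(\alpha_{\min})\le 0\le g_n(\alpha_{\max})$ eventually, yielding a unique zero $\alpha_0\in[\alpha_{\min},\alpha_{\max}]$ of $\bar g$. Inserting the nonnegative, uniformly bounded summand $K\chi_n(t)L_n(t)/n$ inside the limsup preserves monotonicity and produces $\alpha_1$; since this extra term is $\ge 0$ it forces $\alpha_1\le\alpha_0$.

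For the upper bound $\tilde\alpha_f(t)\le\alpha_0$ in case (i), I would take $s_n\in\{u_n,v_n\}$ to be whichever endpoint of $I_n(t)$ is further from $t$. The triangle inequality applied to $|f(v_n)-f(u_n)|=\lambda_{i_1}\cdots\lambda_{i_n}$ yields $|f(s_n)-f(t)|\ge\tfrac12\lambda_{i_1}\cdots\lambda_{i_n}$, while $|s_n-t|\ge C\,c_{i_1}\cdots c_{i_n}$; formula \eqref{eq:dq-product-expression} then gives $|f(s_n)-f(t)|/|s_n-t|^\alpha\gtrsim\exp(n\,g_n(\alpha))$, which is unbounded along a subsequence as soon as $\alpha>\alpha_0$. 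In case (ii), the hypothesis $\eps_1=\eps_m=0$ means that a trailing $m$-run of length $L_n(t)$ pushes $t$ all the way to the right endpoint of $I_n(t)$, so the previous test point is too far to capture the worst scaling. Setting $j:=i_{n-L_n(t)}$, I would instead take $s_n$ in the cylinder at level $n-L_n(t)+1$ obtained by replacing $j$ with $j_\to$. When $\chi_n(t)=1$, this cylinder is non-degenerate and meets the boundary point that $t$ approaches, so $|s_n-t|$ is of order $c_m^{L_n(t)}c_{i_1}\cdots c_{i_{n-L_n(t)}}$ while $|f(s_n)-f(t)|$ remains of order $\lambda_m^{L_n(t)}\lambda_{i_1}\cdots\lambda_{i_{n-L_n(t)}}$. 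Unwinding via the definition \eqref{eq:definition-of-K} of $K$ inserts exactly $K\chi_n(t)L_n(t)/n$ into the logarithmic ratio, showing $\tilde\alpha_f(t)\le\alpha_1$.

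For the matching lower bound, fix $\alpha<\alpha_0$ (resp.\ $\alpha<\alpha_1$) and an arbitrary $s$ near $t$. Let $n$ be maximal with $s\in I_n(t)$, so $s$ lies in an adjacent level-$(n+1)$ cylinder. Since $\diam f(I_n(t))=\lambda_{i_1}\cdots\lambda_{i_n}\,\diam\Gamma$, we have $|f(s)-f(t)|\le C\lambda_{i_1}\cdots\lambda_{i_n}$. In case (i), the structural hypothesis $\max\{\eps_1,\eps_m\}=1$ implies that the distance from $t$ to the complement of $I_{n+1}(t)$ in $I_n(t)$ is at least a fixed multiple of $c_{i_1}\cdots c_{i_n}$, so \eqref{eq:dq-product-expression} yields $|f(s)-f(t)|/|s-t|^\alpha\lesssim \exp(n\,g_n(\alpha))$, bounded because $\bar g(\alpha)<0$. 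In case (ii), for $t$ at the end of an $m$-run of length $L_n(t)$, this distance is sharpened to $\gtrsim c_m^{L_n(t)}c_{i_1}\cdots c_{i_{n-L_n(t)}}$, and the case analysis on $\eps_j$ and whether $j_\to\in\II_+$ (exactly what $\chi_n$ records) combined with \eqref{eq:definition-of-K} recovers the correction term and yields the desired bound.

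The main obstacle will be the case (ii) analysis: identifying the correct neighboring cylinder, accounting for the degenerate or orientation-incompatible configurations that force $\chi_n=0$, and verifying that long runs of $1$'s are absorbed into the analysis of $m$-runs via the normalization \eqref{eq:ratio-assumption}, which places $\rho_m\ge\rho_1$ and makes $m$-runs the dominant source of correction. A second structural subtlety is the case (i) configuration $\eps_m=0$, $\eps_1=1$: although $m$-runs still push $t$ to an endpoint, the natural neighboring cylinder fails to approach $t$ rapidly because $\phi_1$ is orientation-reversing, so no correction appears and $\tilde\alpha_f(t)=\alpha_0$ as stated. Finally, the use of $\limsup$ rather than $\liminf$ in \eqref{eq:general-Holder-limsup-equation} reflects that $\tilde\alpha_f(t)$ is determined by the \emph{worst} scale at which Hölder regularity degrades; matching upper and lower bounds then requires working along the subsequences realizing the limsup.
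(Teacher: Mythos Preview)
Your overall strategy matches the paper's, but two of your four bounds have genuine gaps.

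\textbf{Lower bound.} Your choice of $n$---maximal with $s\in I_n(t)$---does \emph{not} yield $|s-t|\gtrsim c_{i_1}\cdots c_{i_n}$. If $t$ sits near an endpoint of $I_{n+1}(t)$ (for instance, $t\sim(2,m,m,\dots,m,1,\dots)$ and $s$ lies just outside $I_{i_1}$), then $s\notin I_1(t)$ so your $n$ stays bounded while $|s-t|\to 0$, and your bound $|f(s)-f(t)|\le C\lambda_{i_1}\cdots\lambda_{i_n}$ is useless. The hypothesis $\max\{\eps_1,\eps_m\}=1$ does nothing to prevent this. The paper instead chooses $n$ so that $h:=|s-t|\asymp|I_n(t)|$ and then must handle the case $t+h\notin I_n(t)$ by controlling the \emph{adjacent} level-$n$ interval. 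The key combinatorial fact (the paper's Lemma~6.4) is that when $\max\{\eps_1,\eps_m\}=1$, adjacent level-$n$ intervals have codings whose digit counts differ by at most $2$, so their lengths and $f$-oscillations are uniformly comparable; this is what makes the lower bound go through without a correction term.

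\textbf{Upper bound, case (ii).} Your claimed scaling gives $|s_n-t|\asymp c_{i_1}\cdots c_{i_n}$ and $|f(s_n)-f(t)|\asymp\lambda_{i_1}\cdots\lambda_{i_n}$, whose ratio is $\exp(n\,g_n(\alpha))$---this yields only $\tilde\alpha_f(t)\le\alpha_0$, not $\le\alpha_1$, and no amount of ``unwinding via the definition of $K$'' will produce the correction from those two estimates. The source of the factor $e^{KL_n(t)}$ is different: when $\eps_1=\eps_m=0$ and $l:=L_n(t)\ge 1$, the level-$n$ interval adjacent to $I_n(t)$ has coding ending in $1^l$, not $m^l$. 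One must then refine inside that adjacent interval to a sublevel $n+k$ chosen so that $c_1^{l+k}\asymp c_m^l$ (equivalently, so that the refined interval has length $\asymp|I_n(t)|$). The $f$-oscillation over this refined interval is $\asymp\lambda_{i_1}\cdots\lambda_{i_{n-l}}\lambda_1^{l+k}$, and the identity
\[
\frac{\lambda_1^{l+k}}{\lambda_m^{l}}\asymp\left(\frac{\lambda_1^{\log c_m/\log c_1}}{\lambda_m}\right)^{l}=e^{Kl}
\]
is precisely what inserts the correction $K\chi_n(t)L_n(t)/n$ into the exponent. The condition $\chi_n(t)=1$ is exactly what guarantees the adjacent interval is non-degenerate and has the right orientation for this construction. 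Your test point ``in the cylinder obtained by replacing $j$ with $j_\to$'' is at the wrong level and misses this mechanism entirely.
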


\begin{corollary} \label{cor:holder-exponent-densities}
Assume \eqref{eq:ratio-assumption}. Let $t\sim(i_1,i_2,\dots)\in\KK_\phi\backslash\mathcal{T}_0$. If $d_i(t)$ exists for $i=1,\dots,m$ and $d_m(t)<1$, then
\begin{equation}
\tilde{\alpha}_f(t)=\frac{\sum_{i=1}^{m}d_i(t)\log\lambda_i}{\sum_{i=1}^{m}d_i(t)\log c_i}.
\label{eq:Holder-exponent-if-densities-exist}
\end{equation}
\end{corollary}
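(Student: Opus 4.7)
The plan is to deduce this corollary directly from Theorem \ref{thm:individual-Holder-exponents}. Since the densities $d_i(n;t)$ converge to $d_i(t)$ for every $i$, for each fixed $\alpha$ the main term $\sum_{i=1}^m d_i(n;t)(\log\lambda_i-\alpha\log c_i)$ converges to $\sum_{i=1}^m d_i(t)(\log\lambda_i-\alpha\log c_i)$, which is strictly decreasing in $\alpha$ because $\log c_i<0$. If the correction term appearing in part (ii) of the theorem can be shown to vanish in the limsup, then the defining equation for $\alpha_0$ (resp.~$\alpha_1$) collapses to $\sum_i d_i(t)(\log\lambda_i-\alpha\log c_i)=0$, which is precisely \eqref{eq:Holder-exponent-if-densities-exist}.

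In case (i) of the theorem ($\max\{\eps_1,\eps_m\}=1$) there is no correction term, so the assertion is immediate. The substance of the proof lies in case (ii) ($\eps_1=\eps_m=0$): here I would prove that $L_n(t)/n\to 0$ under the hypothesis $d_m(t)<1$, which forces $\limsup_n K\chi_n(t) L_n(t)/n=0$ no matter the values of $\chi_n(t)\in\{0,1\}$ or of $K$.

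To establish $L_n(t)/n\to 0$, I would exploit the identity
\[
k_m(n;t)=L_n(t)+k_m(n-L_n(t);t),
\]
which holds by definition of $L_n(t)$, since the last $L_n(t)$ digits of the standard coding of $t$ are all equal to $m$. The trivial bound $L_n(t)\le k_m(n;t)$ combined with $k_m(n;t)/n\to d_m(t)<1$ gives $\limsup_n L_n(t)/n\le d_m(t)<1$, and in particular $n-L_n(t)\to\infty$. Dividing the identity by $n$ and passing to any subsequence along which $L_n(t)/n\to\gamma$ yields
\[
d_m(t)=\gamma+(1-\gamma)d_m(t),
\]
so $\gamma(1-d_m(t))=0$, and since $d_m(t)<1$ this forces $\gamma=0$. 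Hence $L_n(t)/n\to 0$, and the limsup equation defining $\alpha_1(t)$ collapses to the desired form.

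The main (and only real) obstacle is this run-length estimate; it is precisely here that the hypothesis $d_m(t)<1$ enters. Without it, long tails of $m$'s could persist with positive asymptotic density and the correction term could contribute nontrivially to the limsup, so that $\alpha_1(t)$ would no longer coincide with the naive density ratio in \eqref{eq:Holder-exponent-if-densities-exist}.
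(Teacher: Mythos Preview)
Your proof is correct and follows essentially the same route as the paper: invoke Theorem~\ref{thm:individual-Holder-exponents} and show $L_n(t)=o(n)$ so that the correction term $K\chi_n(t)L_n(t)/n$ vanishes (the paper does this by reducing to the two-symbol alphabet $\{m,\text{``other''}\}$ and citing the proof of Lemma~\ref{lem:Lax-general}, while your identity $k_m(n;t)=L_n(t)+k_m(n-L_n(t);t)$ is a clean self-contained variant of the same idea). One inconsequential slip: the limit $\sum_i d_i(t)(\log\lambda_i-\alpha\log c_i)$ is strictly \emph{increasing} in $\alpha$, not decreasing, since $-\log c_i>0$; this does not affect your conclusion.
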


\begin{proof}
Combining the digits $1,\dots,m-1$ into a single digit ``other", the proof of Lemma \ref{lem:Lax-general} shows that $L_n(t)=o(n)$. Hence, $\alpha_0(t)=\alpha_1(t)=$ the right hand side of \eqref{eq:Holder-exponent-if-densities-exist}.
\end{proof}

Corollary \ref{cor:holder-exponent-densities} implies Lemma \ref{lem:when-derivative-zero}: Under the hypotheses of that lemma, \eqref{eq:Holder-exponent-if-densities-exist} gives $\tilde{\alpha}_f(t)>1$, and therefore $f'(t)=0$.



The proof of Theorem \ref{thm:individual-Holder-exponents} uses the following lemmas.

\begin{lemma} \label{lem:existence-of-alphas}
Under the respective hypotheses of Theorem \ref{thm:individual-Holder-exponents}, the numbers $\alpha_0(t)$ and $\alpha_1(t)$ exist and are unique, and lie in $[\alpha_{\min},\alpha_{\max}]$.
\end{lemma}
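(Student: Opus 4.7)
The plan is to realize both $\alpha_0(t)$ and $\alpha_1(t)$ as the unique roots of strictly increasing continuous functions of $\alpha$. Set
\[
g_0(\alpha):=\limsup_{n\to\infty} h_n(\alpha), \qquad g_1(\alpha):=\limsup_{n\to\infty}\bigl[h_n(\alpha) + K\chi_n(t)\tfrac{L_n(t)}{n}\bigr],
\]
where $h_n(\alpha):=\sum_i d_i(n;t)(\log\lambda_i-\alpha\log c_i)$. Because $t\in\KK_\phi$, every digit $i_n$ lies in $\II_+$, so $d_i(n;t)=0$ for $i\in\II_0$ and $h_n$ is finite; using $\log\lambda_i=\rho_i\log c_i$, one can rewrite $h_n(\alpha)=\sum_{i\in\II_+}d_i(n;t)(\rho_i-\alpha)\log c_i$. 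Each $h_n$ is thus affine in $\alpha$ with slope $-\sum_{i\in\II_+}d_i(n;t)\log c_i$, and since $\sum_{i\in\II_+}d_i(n;t)=1$ this slope lies in the fixed interval $[\,|\log c_{\max}|,|\log c_{\min}|\,]\subset(0,\infty)$. Consequently the family $\{h_n\}$ is equi-Lipschitz and uniformly strictly increasing, and taking a limsup preserves both properties, so $g_0$ and $g_1$ are continuous and strictly increasing on $\RR$. This already delivers uniqueness.

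For existence of $\alpha_0\in[\alpha_{\min},\alpha_{\max}]$, I would just check boundary signs. At $\alpha=\alpha_{\max}$, each summand $(\rho_i-\alpha_{\max})\log c_i$ is $\geq 0$ because $\rho_i\leq\alpha_{\max}$ and $\log c_i<0$; hence $h_n(\alpha_{\max})\geq 0$ and so $g_0(\alpha_{\max})\geq 0$. At $\alpha=\alpha_{\min}$ the same summands are all $\leq 0$, giving $h_n(\alpha_{\min})\leq 0$ and therefore $g_0(\alpha_{\min})\leq 0$. The intermediate value theorem then supplies a (unique) root $\alpha_0\in[\alpha_{\min},\alpha_{\max}]$. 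Since $K\chi_n L_n/n\geq 0$, the same argument immediately yields $g_1(\alpha_{\max})\geq 0$.

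The only genuinely delicate point is showing $g_1(\alpha_{\min})\leq 0$, because the nonnegative correction term could a priori push the limsup above zero. This is where I would exploit the algebraic identity
\[
K=(\rho_1-\rho_m)\log c_m,
\]
which follows directly from \eqref{eq:definition-of-K} together with $\log\lambda_j=\rho_j\log c_j$, and the elementary bound $d_m(n;t)\geq L_n(t)/n$ (the terminal run of $m$'s contributes at least $L_n(t)$ to the count of $m$'s). Since every summand in $h_n(\alpha_{\min})$ is nonpositive, $h_n(\alpha_{\min})$ is bounded above by its single $i=m$ term $d_m(n;t)(\rho_m-\alpha_{\min})\log c_m$, which is itself $\leq \frac{L_n(t)}{n}(\rho_m-\alpha_{\min})\log c_m$ (multiplying a nonpositive number by a larger positive factor). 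Therefore
\[
h_n(\alpha_{\min})+K\chi_n(t)\tfrac{L_n(t)}{n}\;\leq\;\tfrac{L_n(t)}{n}\bigl[(\rho_m-\alpha_{\min})\log c_m+K\bigr]\;=\;\tfrac{L_n(t)}{n}(\rho_1-\alpha_{\min})\log c_m\;\leq\;0,
\]
because $\rho_1\geq\alpha_{\min}$ and $\log c_m<0$. Passing to the limsup gives $g_1(\alpha_{\min})\leq 0$, and a second application of the intermediate value theorem produces the desired $\alpha_1\in[\alpha_{\min},\alpha_{\max}]$.
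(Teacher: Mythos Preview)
Your proof is correct and follows essentially the same strategy as the paper's: both recognize the limsup as a continuous, strictly increasing function of $\alpha$ (the paper phrases this via convexity of a limsup of affine functions, you via uniform equi-Lipschitz bounds on the slopes---equivalent observations), and both reduce the boundary check $g_1(\alpha_{\min})\leq 0$ to the inequality $K L_n(t)\leq -k_m(n;t)(\log\lambda_m-\rho_1\log c_m)$ coming from $L_n\leq k_m$ and the algebraic form of $K$. Two small points the paper addresses that you omit: (a) the paper first verifies that $\chi_n(t)$ is well defined for large $n$ under the hypothesis $\eps_1=\eps_m=0$ (needed because $i_\to$ could a priori fall outside $\II$); and (b) your identity $K=(\rho_1-\rho_m)\log c_m$ presupposes $\lambda_m>0$, so you should note separately that when $\lambda_m=0$ one has $K=0$ and, since $t\in\KK_\phi$ forces $i_n\neq m$, also $L_n(t)\equiv 0$, whence $g_1=g_0$ trivially.
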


\begin{proof}
We demonstrate the existence and uniqueness of $\alpha_1(t)$ in case (ii); the proof concerning $\alpha_0(t)$ is more straightforward. 

Assume $\eps_1=\eps_m=0$. We first show that $\chi_n(t)$ is well defined. Since $\eps_m=0$, there is at least one index $k$ such that $i_k<m$, as otherwise we would have $t=1\in\mathcal{T}_0$. We may then assume (since we are going to let $n\to\infty$) that $n>k$ with $k$ chosen as above, so that $n-L_n(t)>0$. By the definition of $L_n(t)$ in \eqref{eq:run-length}, $i:=i_{n-L_n(t)}<m$ and so $i+1\in\II$. If furthermore $i>1$, then $i-1\in\II$ as well and so $i_{\to}$ is well defined. If $i=1$ (in which case $i-1\not\in\II$), then $\eps_i=\eps_1=0$ and so $i_{\to}=i+1=2$. Thus, in all cases $i_{\to}$ is well defined and as a result, $\chi_n(t)$ is well defined.

Writing the expression in square brackets in \eqref{eq:general-Holder-limsup-equation} as $\sigma_n \alpha_1+\tau_n$, we have $0<-\log c_{\max}\leq\sigma_n\leq -\log c_{\min}$ and $\min_{i\in\II_+}\log\lambda_i\leq\tau_n\leq\max_{i\in\II_+}\log\lambda_i+K$. Thus for each $\alpha>0$, $\vartheta(\alpha):=\limsup(\sigma_n\alpha+\tau_n)$ exists and moreover, $\vartheta(\alpha)$ is strictly increasing and continuous in $\alpha$, since the $\limsup$ of a sequence of linear functions is convex, and hence, continuous. Furthermore, we claim that
\begin{equation}
\vartheta(\alpha_{\min})\leq 0\leq \vartheta(\alpha_{\max}).
\label{eq:from-negative-to-positive}
\end{equation}
This is clear when $K=0$. Otherwise, by \eqref{eq:definition-of-K}, $K=(\log c_m/\log c_1)\log\lambda_1-\log\lambda_m$, and since $\log\lambda_i-\alpha_{\min}\log c_i\leq 0$ for $i=1,\dots,m-1$ and $\alpha_{\min}\leq \log\lambda_1/\log c_1$, we have for each $n\in\NN$,
\begin{align*}
\sum_{i=1}^m k_i(n;t)&(\log\lambda_i-\alpha_{\min}\log c_i)+K\chi_n(t)L_n(t)\\
&\leq k_m(n;t)\left(\log\lambda_m-\frac{\log\lambda_1}{\log c_1}\log c_m\right)+K\chi_n(t)L_n(t)\\
&=K\left\{\chi_n(t)L_n(t)-k_m(n;t)\right\}\leq 0.
\end{align*}
Dividing by $n$ and taking $\limsup$ gives the first inequality in \eqref{eq:from-negative-to-positive}. The second inequality is clear. Therefore, $\vartheta(\alpha)$ has a unique zero in $[\alpha_{\min},\alpha_{\max}]$. 
\end{proof}

\begin{lemma} \label{lem:boundedness}
Let $I_{n,j}$ and $I_{n,j+1}$ have codings $(i_1,\dots,i_n)$ and $(i_1',\dots,i_n')$, respectively. Let $n_0:=\min\{\nu:i_\nu\neq i_\nu'\}$. Let $k_i:=\#\{\nu\leq n: i_\nu=i\}$ and $k_i':=\#\{\nu\leq n: i_\nu'=i\}$. If either $\max\{\eps_1,\eps_m\}=1$ or $\eps_{i_{n_0}}\neq \eps_{i_{n_0}'}$, then $|k_i-k_i'|\leq 2$ for $i=1,\dots,m$.
\end{lemma}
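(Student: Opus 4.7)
The plan is to exploit the tree structure of the IFS subdivision and carefully track how the two codings diverge after their point of disagreement.

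\emph{Step 1: Common prefix and branching digit.} Let $E := \eps_{i_1} + \dots + \eps_{i_{n_0-1}} \pmod{2}$ be the \emph{orientation} of the common ancestor interval $I_{i_1, \dots, i_{n_0-1}}$; this parity determines whether its sub-intervals at depth $n_0$ are listed in increasing ($E=0$) or decreasing ($E=1$) order of their last digit. Positions $1, \dots, n_0 - 1$ contribute equally to $k_i$ and $k_i'$, so the entire difference $k_i - k_i'$ arises from positions $n_0, n_0 + 1, \dots, n$. Adjacency of $I_{n,j}$ and $I_{n,j+1}$ forces $I_{i_1,\dots,i_{n_0}}$ and $I_{i_1,\dots,i_{n_0-1},i_{n_0}'}$ to be consecutive children of the common ancestor, so $|i_{n_0} - i_{n_0}'| = 1$; in particular position $n_0$ alone contributes at most $\pm 1$ to each $k_i - k_i'$.

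\emph{Step 2: The tail digits are extremal.} The crucial structural observation is that for $\nu > n_0$, both $i_\nu$ and $i_\nu'$ lie in $\{1,m\}$. Indeed, since $I_{n,j+1}$ sits in a sibling of $I_{i_1,\dots,i_{n_0}}$, the interval $I_{i_1, \dots, i_n}$ must be the \emph{rightmost} level-$n$ descendant of $I_{i_1, \dots, i_{n_0}}$: recursively, at each depth $\nu > n_0$ we must pick the digit indexing the right edge of the current sub-interval, which is $m$ when the current orientation is $0$ and $1$ when it is $1$. Symmetrically, $i_{n_0+1}', \dots, i_n'$ forms the \emph{leftmost path} below $I_{i_1', \dots, i_{n_0}'}$. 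In particular, for $i \in \{2, \dots, m-1\}$ only position $n_0$ can contribute to $k_i - k_i'$, giving $|k_i - k_i'| \leq 1$ immediately.

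\emph{Step 3: Counting $1$'s and $m$'s in the two tails.} For $i \in \{1, m\}$, let $R_i$ (resp.\ $L_i$) denote the number of $i$'s in positions $n_0 + 1, \dots, n$ of the first (resp.\ second) coding, and set $E_1 := E + \eps_{i_{n_0}} \pmod 2$, $E_1' := E + \eps_{i_{n_0}'} \pmod 2$. The rightmost orientation dynamics is deterministic: orientation $0$ emits $m$ and transitions to $\eps_m$; orientation $1$ emits $1$ and transitions to $1 \oplus \eps_1$. The leftmost dynamics is the mirror image (interchange $1 \leftrightarrow m$ and $\eps_1 \leftrightarrow \eps_m$). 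I would then enumerate the four cases $(\eps_1, \eps_m) \in \{0,1\}^2$. When $\max\{\eps_1, \eps_m\} = 1$, the orientation stabilizes within one step---either to a fixed point (if exactly one of $\eps_1, \eps_m$ equals $1$) or to a period-$2$ cycle alternating $m$ and $1$ (if both equal $1$)---and direct inspection yields $|R_i - L_i| \leq 1$ for every pairing of starting orientations. When $\eps_1 = \eps_m = 0$, each path is constant from its starting orientation (right gives $m^{n-n_0}$ if $E_1=0$ and $1^{n-n_0}$ if $E_1=1$; left gives the opposite), and the hypothesis $\eps_{i_{n_0}} \neq \eps_{i_{n_0}'}$ is exactly the condition $E_1 \neq E_1'$, under which the two constant sequences coincide, so $R_i = L_i$. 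Combining with the $\pm 1$ from position $n_0$, this gives $|k_i - k_i'| \leq 2$ for $i \in \{1, m\}$ as well.

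\emph{Main obstacle.} The bookkeeping is routine, but the argument hinges on recognizing that the two clauses of the hypothesis together rule out exactly the unbounded scenario---namely $\eps_1 = \eps_m = 0$ together with $E_1 = E_1'$, in which case one tail is $m^{n-n_0}$ while the other is $1^{n-n_0}$. Once this bad case is isolated and the rightmost/leftmost orientation dynamics is written down, the remaining verifications are mechanical.
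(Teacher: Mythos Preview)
Your argument is correct and follows essentially the same route as the paper's proof: common prefix up to $n_0$, $|i_{n_0}-i_{n_0}'|=1$, the tails beyond $n_0$ take values only in $\{1,m\}$, and then a case split on $(\eps_1,\eps_m)$ shows the two tails agree up to at most one position (or coincide when $\eps_1=\eps_m=0$ and $\eps_{i_{n_0}}\neq\eps_{i_{n_0}'}$). The only difference is presentational: you formalize the recursion via the orientation parity $E$ and its transition rules, whereas the paper simply writes out the resulting tail patterns directly for each of the four cases.
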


\begin{proof}
We have $(i_1,\dots,i_{n_0-1})=(i_1',\dots,i_{n_0-1}')$ and $|i_{n_0}-i_{n_0}'|=1$. If, say, $\eps_1=0$ and $\eps_m=1$, then $i_{n_0+1}$ and $i_{n_0+1}'$ can each be either $1$ or $m$ (depending on the values of $\eps_{i_{n_0}}$ and $\eps_{i_{n_0}'}$), but $(i_{n_0+2},\dots,i_n)=(i_{n_0+2}',\dots,i_n')=(1,\dots,1)$ in all cases. A similar conclusion holds if $\eps_1=1$ and $\eps_m=0$, with $(1,\dots,1)$ replaced by $(m,\dots,m)$.
If $\eps_1=\eps_m=1$, then $(i_{n_0+1},\dots,i_n)$ and $(i_{n_0+1}',\dots,i_n')$ consist of alternating $1$'s and $m$'s. (For instance, if $\eps_{i_{n_0}}=\eps_{i_{n_0}'}=0$ and $n-n_0$ is odd, then $(i_{n_0+1},\dots,i_n)=(m,1,\dots,m,1,m)$ and $(i_{n_0+1}',\dots,i_n')=(1,m,\dots,1,m,1)$ or vice versa.) Finally, if $\eps_1=\eps_m=0$ and $\eps_{i_{n_0}}\neq \eps_{i_{n_0}'}$, then $(i_{n_0+1},\dots,i_n)=(i_{n_0+1}',\dots,i_n')=(1,\dots,1)$ or $(m,\dots,m)$. Thus, in all cases, $|k_i-k_i'|\leq 2$ for $i=1,\dots,m$.
\end{proof}

\begin{proof}[Proof of Theorem \ref{thm:individual-Holder-exponents}]
Note that
\begin{equation*}
\tilde{\alpha}_f(t)=\sup\left\{\alpha>0: \frac{|f(t+h)-f(t)|}{|h|^\alpha}\to 0\ \mbox{as $h\to 0$}\right\}.
\end{equation*}
For an interval $I\subset[0,1]$, let $\omega_f(I):=\sup_{s,t\in I}|f(t)-f(s)|$ be the oscillation of $f$ over $I$.
Assume initially that $\lambda_{m}>0$, so that
\begin{equation}
K=\left(\frac{\log c_m}{\log c_1}\right)\log\lambda_1-\log\lambda_m.
\label{eq:formula-for-K}
\end{equation}

{\em (a)} We first show that $\tilde{\alpha}_f(t)\geq\alpha_0(t)$ when $\max\{\eps_1,\eps_m\}=1$. Let
\begin{equation*}
K_c:=\prod_{i=1}^m c_i^2, \qquad K_\lambda:=\prod_{i\in\II_+}\lambda_i^2.
\end{equation*}
Fix $\alpha<\alpha_0(t)$. Let $h>0$, and let $n$ be the largest integer such that
\begin{equation}
h\leq K_c|I_n(t)|=c_{i_1}\cdots c_{i_n}K_c.
\label{eq:h-estimate}
\end{equation}
Then
$h>c_{i_1}\cdots c_{i_{n+1}}K_c\geq c_{i_1}\cdots c_{i_{n}}c_{\min}K_c$, so there is an absolute constant $A>0$ such that $A^{-1}<h/c_{i_1}\cdots c_{i_n}<A$; we write this as $h \asymp c_{i_1}\cdots c_{i_n}$.

If $t+h\in I_n(t)$, then we have simply
\begin{equation}
\frac{|f(t+h)-f(t)|}{h^\alpha}\leq \frac{\omega_f(I_n(t))}{h^\alpha} \asymp
\frac{\lambda_{i_1}\cdots\lambda_{i_n}}{(c_{i_1}\cdots c_{i_n})^\alpha}=\prod_{i=1}^m \left(\frac{\lambda_i}{c_i^\alpha}\right)^{d_i(n;t)}\to 0
\label{eq:easy-case}
\end{equation}
as $n\to\infty$. 

Assume therefore that $t+h\not\in I_n(t)$. Let $j$ be the integer such that $I_n(t)=I_{n,j}$. Let $(i_1',\dots,i_n')$ be the coding of the adjacent interval $I_{n,j+1}$. We sort out the cases in which the length of $I_{n,j+1}$ is comparable to $h$. Define $k_i$ and $k_i'$ as in Lemma \ref{lem:boundedness}. Since $\max\{\eps_1,\eps_m\}=1$, Lemma \ref{lem:boundedness} implies that $|k_i-k_i'|\leq 2$ for each $i$. 
But then $|I_{n,j+1}|\geq K_c|I_{n,j}|\geq h$, so $t+h\in I_{n,j+1}$, and moreover, $\omega_f(I_{n,j+1})\leq K_\lambda^{-1}\omega_f(I_{n,j})$. Thus, 
\begin{equation}
\frac{|f(t+h)-f(t)|}{h^\alpha}\leq \frac{\omega_f(I_{n,j})+\omega_f(I_{n,j+1})}{h^\alpha}\leq \frac{(1+K_\lambda^{-1})\omega_f(I_n(t))}{h^\alpha}\to 0,
\label{eq:this-quotient-going-to-zero}
\end{equation}
as in \eqref{eq:easy-case}. The argument for $h<0$ is similar. Hence, $\tilde{\alpha}_f(t)\geq\alpha_0(t)$ when $\max\{\eps_1,\eps_m\}=1$.

\bigskip

{\em (b)} We next show that $\tilde{\alpha}_f(t)\geq\alpha_1(t)$ when $\eps_1=\eps_m=0$. Fix $\alpha<\alpha_1(t)$; then certainly $\alpha<\alpha_0(t)$, so we can follow the argument under {\em (a)} up to the point where Lemma \ref{lem:boundedness} is used. Define $n_0$, $k_i$ and $k_i'$ as in Lemma \ref{lem:boundedness}. If $n_0\geq n-1$, then obviously $|k_i-k_i'|\leq 2$ for each $i$. By Lemma \ref{lem:boundedness}, the same is true if $\eps_{i_{n_0}}\neq \eps_{i_{n_0}'}$. In these cases, we get \eqref{eq:this-quotient-going-to-zero} in the same way as before and we are done.

From now on we can, therefore, assume that $n_0\leq n-2$, $\eps_1=\eps_m=0$ and $\eps_{i_{n_0}}=\eps_{i_{n_0}'}$. Since $n_0<n$, $i_n\in\{1,m\}$.

\bigskip
{\em Case 1.} Suppose $i_n=m$. Then $i_\nu=m$ and $i_{\nu}'=1$ for $\nu=n_0+1,\dots,n$, so $l:=L_n(t)=n-n_0$.
Now $|I_n(t)|=c_{i_1}\cdots c_{i_n}=c_{i_1}\cdots c_{i_{n-l}}c_m^l$, and
\begin{equation*}
|I_{n,j+1}|=c_{i_1}\cdots c_{i_{n-l-1}}c_{i_{n-l}'}c_1^l.
\end{equation*}
Let $k$ be the largest integer (possibly negative) such that
\begin{equation*}
c_{i_1}\cdots c_{i_{n-l-1}}c_{i_{n-l}'}c_1^{l+k}\geq h.
\end{equation*}
Then
\begin{equation*}
c_1^{k} \asymp \frac{h}{c_{i_1}\cdots c_{i_{n-l-1}}c_{i_{n-l}'}c_1^l}\asymp \left(\frac{c_m}{c_1}\right)^l,
\end{equation*}
and so
\begin{equation}
\lambda_1^{k}\asymp \lambda_1^{l(\log c_m-\log c_1)/\log c_1}.
\label{eq:lambda-to-the-k-magnitude}
\end{equation}
Note that $k\geq -l$ by \eqref{eq:h-estimate}. Therefore, the interval $I_{n+k}(t+h)$ is adjacent to $I_n(t)$ and has length $c_{i_1}\cdots c_{i_{n-l-1}}c_{i_{n-l}'}c_1^{l+k}$. Moreover,
\begin{equation*}
\omega_f(I_{n+k}(t+h))=\omega_f([0,1])\lambda_{i_1}\cdots\lambda_{i_{n-l-1}}\lambda_{i_{n-l}'}\lambda_1^{l+k}.
\end{equation*}
At this point, the fact that $i_{n-l+1}=m$ and $i_{n-l+1}'=1$ implies that $(i_{n-l})_{\to}=i_{n-l}'$, so with $i:=i_{n-l}$, we have $\eps_i=\eps_{i_{\to}}$. Therefore, if $\chi_n(t)=0$, then $\lambda_{i_{n-l}'}=\lambda_{i_{\to}}=0$ and so $\omega_f(I_{n+k}(t+h))=0$, which means we simply have \eqref{eq:easy-case} again.

Suppose $\chi_n(t)=1$. Then $\lambda_{(i_{n-l})_{\to}}>0$, so
\begin{align*}
\omega_f(I_{n+k}(t+h))&\asymp \lambda_{i_1}\cdots \lambda_{i_{n-l}}\lambda_1^{l+k}
=\lambda_{i_1}\cdots\lambda_{i_n}\left(\frac{\lambda_1}{\lambda_m}\right)^l \lambda_1^k\\
&\asymp \lambda_{i_1}\cdots\lambda_{i_n}\left(\frac{\lambda_1^{\log c_m/\log c_1}}{\lambda_m}\right)^l
=\lambda_{i_1}\cdots\lambda_{i_n}e^{Kl},
\end{align*}
using \eqref{eq:lambda-to-the-k-magnitude} and \eqref{eq:formula-for-K}. It follows that
\begin{align*}
\frac{\omega_f(I_{n+k}(t+h))}{h^\alpha} &\asymp \frac{\lambda_{i_1}\cdots\lambda_{i_n}}{(c_{i_1}\cdots c_{i_n})^\alpha}e^{Kl}
=\left[e^{Kl/n}\prod_{i=1}^m \left(\frac{\lambda_i}{c_i^\alpha}\right)^{d_i(n;t)}\right]^n\\
&=\left[\exp\left\{\sum_{i=1}^m d_i(n;t)(\log\lambda_i-\alpha\log c_i)+K\chi_n(t)\frac{L_n(t)}{n}\right\}\right]^n\\
&\to 0,
\end{align*}
since $\alpha<\alpha_1(t)$. Hence,
\begin{equation}
\frac{|f(t+h)-f(t)|}{h^\alpha}\leq \frac{\omega_f(I_n(t))+\omega_f(I_{n+k}(t+h))}{h^\alpha}\to 0.
\label{eq:final-convergence}
\end{equation}

{\em Case 2.} Suppose $i_n=1$. This case is more straightforward. We reverse the roles of the ``digits" $1$ and $m$. If $t+h\not\in I_n(t)$, let $l'$ be the largest integer $j$ such that $i_{n-j+1}=\dots=i_{n-1}=i_n=1$ (or $l'=0$ if $i_n>1$). Put $K':=(\log c_1/\log c_m)\log\lambda_m-\log\lambda_1$, and note that $K'\leq 0$. Now we similarly find a basic interval $I_{n+k}(t+h)$ adjacent to $I_n(t)$ and with
\begin{equation*}
\omega_f(I_{n+k}(t+h))\asymp \lambda_{i_1}\cdots\lambda_{i_n}e^{K'l'}.
\end{equation*}
But, since $K'\leq 0$, we simply have $\omega_f(I_{n+k}(t+h))\leq C\lambda_{i_1}\cdots\lambda_{i_n}$ for a suitable constant $C$, and we again obtain \eqref{eq:final-convergence}.

The argument for $h<0$ is entirely similar. We have thus shown that $\tilde{\alpha}_f(t)\geq \alpha_1(t)$. 

\bigskip
{\em (c)} Next, we show that $\tilde{\alpha}_f(t)\leq \alpha_0(t)$. Let $\alpha>\alpha_0(t)$; then
\begin{equation*}
\limsup_{n\to\infty} \frac{|f(v_n)-f(u_n)|}{(v_n-u_n)^\alpha}=\limsup_{n\to\infty} \frac{\lambda_{i_1}\cdots\lambda_{i_n}}{(c_{i_1}\cdots c_{i_n})^\alpha}=\infty,
\end{equation*}
where we recall that $I_n(t)=[u_n,v_n]$. Since
\begin{equation*}
\frac{f(v_n)-f(u_n)}{(v_n-u_n)^\alpha}=\frac{f(v_n)-f(t)}{(v_n-t)^\alpha}\left(\frac{v_n-t}{v_n-u_n}\right)^\alpha-\frac{f(u_n)-f(t)}{(t-u_n)^\alpha}\left(\frac{t-u_n}{v_n-u_n}\right)^\alpha,
\end{equation*}
it follows that
\begin{equation*}
\limsup_{n\to\infty}\max\left\{\frac{|f(u_n)-f(t)|}{|u_n-t|^\alpha},\frac{|f(v_n)-f(t)|}{|v_n-t|^\alpha}\right\}=\infty.
\end{equation*}
Hence, $\tilde{\alpha}_f(t)\leq \alpha_0(t)$.

\bigskip
{\em (d)} Finally, we show that $\tilde{\alpha}_f(t)\leq \alpha_1(t)$ when $\eps_1=\eps_m=0$.
Fix $\alpha>\alpha_1(t)$. Given $n\in\NN$, let $l:=L_n(t)$, and let $j$ again be the integer such that $I_n(t)=I_{n,j}$. If $\chi_n(t)=0$, there is nothing to show, so assume $\chi_n(t)=1$. This implies $\eps_{i_{n-l}}=\eps_{(i_{n-l})_{\to}}$. Also assume $l\geq 1$, so $i_n=m$. (Otherwise there is again nothing to show.) Then either of the intervals $I_{n,j-1}$ and $I_{n,j+1}$ has length $c_{i_1}\cdots c_{i_{n-l-1}}c_{(i_{n-l})_{\to}}c_1^l$; assume without loss of generality that this interval is $I_{n,j+1}$.

Note that
\begin{equation}
c_{i_1}\cdots c_{i_{n-l-1}}c_{(i_{n-l})_{\to}}\geq c_{i_1}\cdots c_{i_{n-l}}K_c\geq c_{i_1}\cdots c_{i_n}K_c=K_c|I_n(t)|.
\label{eq:comparing-lengths}
\end{equation}
Let $k$ be the largest integer such that
\begin{equation*}
c_{i_1}\cdots c_{i_{n-l-1}}c_{(i_{n-l})_{\to}}c_1^{l+k}\geq K_c|I_n(t)|.
\end{equation*}
Then $k\geq -l$ by \eqref{eq:comparing-lengths}. Let $j_1$ be the integer such that $I_{n,j+1}$ and $I_{n+k,j_1}$ have the same left endpoint.
We have
\begin{equation}
|I_{n+k,j_1}|=c_{i_1}\cdots c_{i_{n-l-1}}c_{(i_{n-l})_{\to}}c_1^{l+k}\geq K_c|I_n(t)|.
\label{eq:at-least-this-long}
\end{equation}
Furthermore,
\begin{equation*}
\omega_f(I_{n+k,j_1})=\omega_f([0,1])\lambda_{i_1}\cdots\lambda_{i_{n-l-1}}\lambda_{(i_{n-l})_{\to}}\lambda_1^{l+k}.
\end{equation*}
Let $w_n$ be the right endpoint of $I_{n+k,j_1}$.
Since $\chi_n(t)=1$, we have $\lambda_{(i_{n-l})_{\to}}>0$, and as in the first half of the proof we obtain
\begin{equation*}
|f(w_n)-f(v_n)|\asymp\omega_f(I_{n+k,j_1})\asymp \lambda_{i_1}\cdots\lambda_{i_n}e^{Kl}=\lambda_{i_1}\cdots\lambda_{i_n}e^{K\chi_n(t)L_n(t)}.
\end{equation*}
Recall also that $w_n-v_n=|I_{n+k,j_1}|\asymp |I_n(t)|=c_{i_1}\cdots c_{i_n}$. Hence, since $\alpha>\alpha_1(t)$,
\begin{equation}
\limsup_{n\to\infty}\frac{|f(w_n)-f(v_n)|}{(w_n-v_n)^\alpha}=\infty.
\label{eq:this-one-or-that-one}
\end{equation}
Now write
\begin{equation*}
\frac{f(w_n)-f(v_n)}{(w_n-v_n)^\alpha}=\frac{f(w_n)-f(t)}{(w_n-t)^\alpha}\left(\frac{w_n-t}{w_n-v_n}\right)^\alpha
  -\frac{f(v_n)-f(t)}{(v_n-t)^\alpha}\left(\frac{v_n-t}{w_n-v_n}\right)^\alpha.
\end{equation*}
Note that
\begin{equation*}
0\leq\frac{v_n-t}{w_n-v_n}\leq\frac{w_n-t}{w_n-v_n}\leq\frac{|I_n(t)|+|I_{n+k,j_1}|}{|I_{n+k,j_1}|}\leq 1+K_c^{-1},
\end{equation*}
in view of \eqref{eq:at-least-this-long}. Therefore, \eqref{eq:this-one-or-that-one} implies
\begin{equation*}
\limsup_{n\to\infty}\max\left\{\frac{|f(v_n)-f(t)|}{(v_n-t)^\alpha},\frac{|f(w_n)-f(t)|}{(w_n-t)^\alpha}\right\}=\infty.
\end{equation*}
Hence, $\tilde{\alpha}_f(t)\leq \alpha_1(t)$.

\bigskip
{\em (e)} So far we had assumed that $\lambda_m>0$. However, if $\lambda_m=0$, then $K=0$ and our assumption that $i_n\in\mathcal{I}_+$ for each $n$ implies that $L_n(t)=0$ for every $n$, so we always have $l=0$ in the analysis above, and $\tilde{\alpha}_f(t)=\alpha_0(t)$. This completes the proof.
\end{proof}

Since $\mathcal{T}_0$ is a countable set, it plays no role in the determination of the pointwise H\"older spectrum of $f$. But, for completeness, we determine $\tilde{\alpha}_f(t)$ for $t\in\mathcal{T}_0$ as well. Each such $t$ has two different codings, either (i) both ending in $1^\infty$; (ii) both ending in $m^\infty$; (iii) one ending in $1^\infty$ and one in $m^\infty$; or (iv) both ending in $(1,m)^\infty$, either in sync or one step out of phase. (Possibility (iv) occurs when $\eps_1=\eps_m=1$.)

\begin{theorem} \label{thm:pointwise-Holder-at-endpoints}
Assume \eqref{eq:ratio-assumption}, and let $t\in\mathcal{T}_0$. Assume neither coding of $t$ contains a digit from $\II_0$.
\begin{enumerate}[(i)]
\item If at least one coding of $t$ ends in $1^\infty$, then $\tilde{\alpha}_f(t)=\log\lambda_1/\log c_1$;
\item If both codings of $t$ end in $m^\infty$, then $\tilde{\alpha}_f(t)=\log\lambda_m/\log c_m$;
\item If both codings of $t$ end in $(1,m)^\infty$, then $\tilde{\alpha}_f(t)=\log(\lambda_1\lambda_m)/\log(c_1 c_m)$.
\end{enumerate}
\end{theorem}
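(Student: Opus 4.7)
The plan is to reduce the computation to a one-sided analysis along each of the two codings of $t$, adapting the arguments from the proof of Theorem \ref{thm:individual-Holder-exponents}. Since $t\in\mathcal{T}_0$, each coding determines a nested sequence of basic intervals $I_n(t)$ approaching $t$ from one side, and since $t$ arises as a fixed point of the limiting composition ($\phi_1$ when $\eps_1=0$ in (i), $\phi_m$ when $\eps_m=0$ in (ii), $\phi_1\circ\phi_m$ or $\phi_m\circ\phi_1$ when $\eps_1=\eps_m=1$ in (iii)), the point $t$ itself is an endpoint of $I_n(t)$ for all sufficiently large $n$. I would first observe that $\tilde{\alpha}_f(t)=\min\{\alpha^-,\alpha^+\}$, where $\alpha^\pm$ are the one-sided H\"older exponents corresponding to the two codings.

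Next I would compute each one-sided exponent. If a coding has the eventually periodic tail $w^\infty$ with $w\in\{1,m,(1,m)\}$, write $I_n(t)=I_{j_1,\dots,j_N,w,w,\dots}$ for $n\ge N$; then $|I_n(t)|$ and $\omega_f(I_n(t))$ factor as $c_{j_1}\cdots c_{j_N}$ or $\lambda_{j_1}\cdots\lambda_{j_N}$ times an explicit power of the periodic block. Taking logs and letting $n\to\infty$ absorbs the prefix into an $O(1)$ additive constant, yielding the one-sided exponent
\[
\alpha^{\text{side}}=\lim_{n\to\infty}\frac{\log\omega_f(I_n(t))}{\log|I_n(t)|}=\begin{cases}\log\lambda_1/\log c_1 & \text{tail } 1^\infty,\\ \log\lambda_m/\log c_m & \text{tail } m^\infty,\\ \log(\lambda_1\lambda_m)/\log(c_1 c_m) & \text{tail } (1,m)^\infty.\end{cases}
\]
The lower bound $\tilde{\alpha}_f^{\text{side}}(t)\ge\alpha^{\text{side}}$ is then obtained exactly as in parts (a)-(b) of the proof of Theorem \ref{thm:individual-Holder-exponents}: for $h$ on the given side with $|h|\asymp|I_n(t)|$, either $t+h\in I_n(t)$ (so $|f(t+h)-f(t)|\le\omega_f(I_n(t))$) or $t+h$ falls into an adjacent sibling interval of comparable length, handled by the same oscillation estimate. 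The matching upper bound comes from evaluating the difference quotient at the far endpoint $v_n$ of $I_n(t)$: since $t$ is the near endpoint, $|f(v_n)-f(t)|=\lambda_{i_1}\cdots\lambda_{i_n}$ exactly, and $|v_n-t|=c_{i_1}\cdots c_{i_n}$, so the quotient $|f(v_n)-f(t)|/|v_n-t|^\alpha$ tends to $\infty$ for any $\alpha>\alpha^{\text{side}}$.

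Finally, I would assemble the cases. Hypothesis \eqref{eq:ratio-assumption} rearranges (using $\log c_1,\log c_m<0$) to
\[
\frac{\log\lambda_1}{\log c_1}\;\le\;\frac{\log(\lambda_1\lambda_m)}{\log(c_1 c_m)}\;\le\;\frac{\log\lambda_m}{\log c_m},
\]
so taking the minimum of the two one-sided exponents in each case immediately gives the claimed value: in (i) at least one side contributes $\log\lambda_1/\log c_1$, which is the smallest possible value among the three; in (ii) both sides contribute $\log\lambda_m/\log c_m$; in (iii) both sides contribute $\log(\lambda_1\lambda_m)/\log(c_1c_m)$.

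The main obstacle is a bookkeeping one rather than a conceptual one: one must carefully verify for each sign combination $(\eps_1,\eps_m)$ that $t$ really is an endpoint of every $I_n(t)$ beyond the initial prefix, and that on the "other" side of $t$ the sibling intervals used in the adjacency argument are indeed of length comparable to $|I_n(t)|$ (a consequence of Lemma \ref{lem:boundedness} once the signatures are accounted for). Once this is set up, the three tail patterns fall out from the same computation, and the min-of-two-sides argument closes the proof.
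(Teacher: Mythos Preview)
Your approach is correct and matches the paper's: compute the one-sided exponents along each coding and take the minimum, using \eqref{eq:ratio-assumption} to order them. The paper's execution is simpler, though---since $t$ is an endpoint of $I_n(t)$ for all large $n$, one can always choose $n$ so that $t+h\in I_n(t)$ on the relevant side, so the adjacent-interval machinery (Lemma~\ref{lem:boundedness} and parts (a)--(b) of the proof of Theorem~\ref{thm:individual-Holder-exponents}) is never needed and the direct bound $|f(t+h)-f(t)|\le\omega_f(I_n(t))\asymp\lambda_1^n$ (resp.\ $\lambda_m^n$, $(\lambda_1\lambda_m)^{n/2}$) already gives the lower estimate.
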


\begin{proof}
Assume $t$ has one coding ending in $1^\infty$ and one ending in $m^\infty$. Say the coding ``from the right" is the one ending in $1^\infty$; that is, for some interval $I:=I_{i_1,\dots,i_N}$ and all $k\in\NN$, $t$ is the left endpoint of $I_{i_1,\dots,i_N,1^k}$. Let $0<h\leq |I|$, and let $n$ be the integer such that $|I|c_1^{n+1}<h\leq |I|c_1^n$. Then
\begin{equation*}
|f(t+h)-f(t)|\leq \omega_f(I)\omega_f([0,c_1^n])=\omega_f(I)\omega_f([0,1])\lambda_1^n\leq K_1 h^{\log\lambda_1/\log c_1},
\end{equation*}
for a suitable constant $K_1$ depending only on $t$. For $h<0$ we similarly have $|f(t+h)-f(t)|\leq K_2|h|^{\log\lambda_{m}/\log c_m}$. As a result, $\tilde{\alpha}_f(t)\geq \min\{\log\lambda_1/\log c_1,\log\lambda_{m}/\log c_m\}=\log\lambda_1/\log c_1$. Equality follows by considering the sequence $h=|I|c_1^{n}$. 

The other cases are proved similarly.
\end{proof}

\begin{remark}
{\rm
{\em (a)} It follows from Theorem \ref{thm:pointwise-Holder-at-endpoints} that the expression \eqref{eq:Holder-exponent-if-densities-exist} is valid also for $t\in\mathcal{T}_0$, provided that in case (i) we use a coding for $t$ ending in $1^\infty$.

{\em (b)} If both codings of $t$ contain a digit from $\II_0$, then $\tilde{\alpha}_f(t)=\infty$. If only one coding of $t$ contains a digit from $\II_0$, then the other coding determines $\tilde{\alpha}_f(t)$ as in cases (i)-(iii) of Theorem \ref{thm:pointwise-Holder-at-endpoints} (suitably modified).
}
\end{remark}

\section{Proofs of Theorems \ref{thm:Hausdorff-dimension} and \ref{thm:Holder-spectrum}} \label{sec:Holder-proofs}

The lower bound for the dimension of $\tilde{E}_f(\alpha)$ is straightforward.

\begin{lemma} \label{lem:lower-bound}
Let $\alpha_{\min}\leq\alpha\leq\alpha_{\max}$. Then $\dim_H \tilde{E}_f(\alpha)\geq \beta^*(\alpha)$.
\end{lemma}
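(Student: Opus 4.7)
The plan is to combine the duality formula of Proposition \ref{prop:duality} with the Eggleston-type digit-frequency theorem \eqref{eq:general-Eggleston} and Corollary \ref{cor:holder-exponent-densities}. First I would invoke Proposition \ref{prop:duality} to pick a maximizer $\mathbf{p}^* = (p_1^*,\dots,p_m^*) \in \Delta_m^0$ for which $H(\mathbf{p}^*) = \beta^*(\alpha)$ and
$$\sum_{i=1}^{m} p_i^*(\log\lambda_i - \alpha \log c_i) = 0, \qquad (*)$$
where the sum (originally over $\II_+$) is extended to all of $\II$ using the convention $0\log 0 = 0$, since $p_i^* = 0$ for $i\in\II_0$.

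Next I would consider the level set
$$F := \{t \in \KK_\phi \setminus \mathcal{T}_0 : d_i(t) = p_i^* \text{ for } i = 1, \ldots, m\}.$$
The requirement $t \in \KK_\phi$ (that is, $i_n \in \II_+$ for every $n$) is automatically compatible with the prescribed frequencies, and applying \eqref{eq:general-Eggleston} to the subshift on the alphabet $\II_+$ with symbol probabilities $(c_i)_{i \in \II_+}$ gives $\dim_H F = H(\mathbf{p}^*) = \beta^*(\alpha)$; discarding the countable set $\mathcal{T}_0$ does no harm. I would then show that $F \subseteq \tilde{E}_f(\alpha)$: assuming $p_m^* < 1$, Corollary \ref{cor:holder-exponent-densities} applies to every $t \in F$ and yields
$$\tilde{\alpha}_f(t) = \frac{\sum_{i=1}^m p_i^* \log\lambda_i}{\sum_{i=1}^m p_i^* \log c_i} = \alpha,$$
where the last equality is just a rearrangement of $(*)$.

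The only exceptional case is $p_m^* = 1$, which forces $\mathbf{p}^* = (0,\dots,0,1)$, $\beta^*(\alpha) = H(\mathbf{p}^*) = 0$, and $\alpha = \rho_m$; here the fixed point of $\phi_m$ lies in $\tilde{E}_f(\alpha)$ by Theorem \ref{thm:pointwise-Holder-at-endpoints}(ii), so the trivial bound $\dim_H \tilde{E}_f(\alpha) \geq 0$ suffices. The main technical nuisance I expect is verifying that the restriction to $\KK_\phi$ (which excludes $t$ whose codings use any letter in $\II_0$, even sparsely) does not reduce the Hausdorff dimension of the prescribed-frequency set below $H(\mathbf{p}^*)$; this is handled by running the standard lower-bound construction for \eqref{eq:general-Eggleston} entirely within $\II_+^\NN$, so that it never generates a forbidden digit in the first place.
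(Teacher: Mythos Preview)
Your proposal is correct and follows the same route as the paper: choose the maximizer $\mathbf{p}^*$ via Proposition \ref{prop:duality}, apply Corollary \ref{cor:holder-exponent-densities} to embed the prescribed digit-frequency set in $\tilde{E}_f(\alpha)$, and use \eqref{eq:general-Eggleston} for the dimension lower bound. You are in fact slightly more careful than the paper about the restriction to $\KK_\phi$ when $\II_0\neq\emptyset$ and about the degenerate case $p_m^*=1$, but the argument is essentially identical.
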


\begin{proof}
Let $\mathbf{p}^*=(p_1^*,\dots,p_m^*)$ attain the constrained maximum in \eqref{eq:duality}. By Corollary \ref{cor:holder-exponent-densities}, $\tilde{E}_f(\alpha)$ contains the set of points $t\in(0,1)$ such that $d_i(t)=p_i^*$ for each $i$. Hence, by \eqref{eq:general-Eggleston} and Proposition \ref{prop:duality}, $\dim_H \tilde{E}_f(\alpha)\geq H(\mathbf{p}^*)=\beta^*(\alpha)$. (Note that the only situation in which Corollary \ref{cor:holder-exponent-densities} does not apply is when $p_{m}^*=1$, but in that case $\beta^*(\alpha)=0$ and the lower bound holds trivially.)
\end{proof}

Proving that $\dim_H \tilde{E}_f(\alpha)\leq \beta^*(\alpha)$ is rather more difficult. We first develop a few technical lemmas. Without loss of generality we assume \eqref{eq:ratio-assumption}. For brevity, write
\begin{equation*}
\gamma_i:=\log\lambda_i-\alpha\log c_i, \qquad i=1,\dots,m,
\end{equation*}
where we set $\log 0\equiv -\infty$. In the following definition and lemmas, assume $K$ is given by \eqref{eq:formula-for-K}. For $n\in\NN$, $l=0,1,\dots,n$ and $\eps>0$, define the set of partitions
\begin{align*}
\mathcal{P}_{n,l}^{(\eps)}&:=\bigg\{(k_1,\dots,k_{m})\in\{0,1,\dots,n\}^m: \sum_{i=1}^{m}k_i=n,\ k_{m}\geq l,\ \mbox{and}\\
&\qquad\qquad\qquad\qquad \sum_{i=1}^{m}k_i\gamma_i+Kl\geq-n\eps\bigg\}.
\end{align*}

\begin{lemma} \label{lem:technical1}
Suppose $c_1\geq c_m$. Then for every $s>0$, every $\eps>0$, every sufficiently large $n$ and every $l\in\{0,1,\dots,n\}$,
\begin{align*}
\max&\left\{\frac{(n-l)!}{k_1!\cdots k_{m-1}!(k_m-l)!}\prod_{i=1}^m c_i^{k_i s}: (k_1,\dots,k_m)\in\mathcal{P}_{n,l}^{(\eps)}\right\}\\
&\qquad \leq \max\left\{\frac{(n+n_0)!}{k_1!\cdots k_m!}\prod_{i=1}^m c_i^{k_i s}: (k_1,\dots,k_m)\in\mathcal{P}_{n+n_0,0}^{(2\eps)}\right\},
\end{align*}
where $n_0\geq 0$ is the largest integer such that $c_m^l\leq c_1^{l+n_0}$.
\end{lemma}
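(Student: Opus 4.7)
The plan is to exhibit, for each admissible $(k_1,\dots,k_m)\in\mathcal{P}_{n,l}^{(\eps)}$, a single partition $(k_1',\dots,k_m')\in\mathcal{P}_{n+n_0,0}^{(2\eps)}$ that makes the right-hand maximand at least as large as the left-hand one. The natural choice is the ``shift''
\[
k_1':=k_1+l+n_0,\qquad k_m':=k_m-l,\qquad k_i':=k_i\ \text{for } 1<i<m.
\]
Intuitively, one trades the $l$ forced trailing $m$-symbols for $1$-symbols and pads with the $n_0$ extra $1$-symbols that were built into the definition of $n_0$ so that the resulting interval length is comparable. Non-negativity of $k_m'$ follows from $k_m\geq l$, and clearly $\sum k_i'=n+n_0$.

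To verify the constraint $\sum k_i'\gamma_i\geq -2(n+n_0)\eps$, I would compute
\[
\sum_{i=1}^m k_i'\gamma_i=\sum_{i=1}^m k_i\gamma_i+(l+n_0)\gamma_1-l\gamma_m.
\]
Setting $\delta:=l\log c_m/\log c_1-(l+n_0)$, the maximality in the definition of $n_0$ forces $\delta\in[0,1)$. Plugging in the formula $K=(\log c_m/\log c_1)\log\lambda_1-\log\lambda_m$ and simplifying yields the identity $(l+n_0)\gamma_1-l\gamma_m=Kl-\delta\gamma_1$, hence
\[
\sum_{i=1}^m k_i'\gamma_i=\Bigl(\sum_{i=1}^m k_i\gamma_i+Kl\Bigr)-\delta\gamma_1\geq -n\eps-|\gamma_1|,
\]
using $\delta<1$ and the defining inequality for $\mathcal{P}_{n,l}^{(\eps)}$. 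This is $\geq -2(n+n_0)\eps$ as soon as $n\geq|\gamma_1|/\eps$, a threshold that is independent of $l$; this is precisely where the ``sufficiently large $n$'' hypothesis enters.

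It remains to show that
\[
\frac{(n+n_0)!\prod_{i} c_i^{k_i' s}}{\prod_{i} k_i'!}\geq \frac{(n-l)!\prod_{i} c_i^{k_i s}}{k_1!\cdots k_{m-1}!(k_m-l)!}.
\]
The ratio of the $c_i$-products is $(c_1^{l+n_0}/c_m^l)^s\geq 1$ by the very definition of $n_0$, while the ratio of the combinatorial factors equals
\[
\frac{(n+n_0)!/(n-l)!}{(k_1+l+n_0)!/k_1!}=\prod_{j=1}^{l+n_0}\frac{n-l+j}{k_1+j},
\]
each factor being $\geq 1$ because $k_1\leq n-l$ (indeed $k_1+k_m\leq n$ and $k_m\geq l$). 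Both ratios are thus $\geq 1$, and the lemma follows. The only nontrivial step — and the one where $K$'s precise formula \eqref{eq:definition-of-K} is essential — is the algebraic identity expressing $(l+n_0)\gamma_1-l\gamma_m$ in terms of $K$ and $\delta$; everything else is built directly into the definitions of $n_0$ and $\mathcal{P}_{n,l}^{(\eps)}$.
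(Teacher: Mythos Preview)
Your proof is correct and follows essentially the same route as the paper: you use the identical shift $(k_1',\dots,k_m')=(k_1+l+n_0,k_2,\dots,k_{m-1},k_m-l)$, the same bound $\sum k_i'\gamma_i\geq -n\eps-|\gamma_1|$ (your error term $-\delta\gamma_1$ is exactly the paper's term $\{(l+n_0)\log c_1-l\log c_m\}(\log\lambda_1/\log c_1-\alpha)$ rewritten), and the same monotonicity arguments for the multinomial and $c_i$-product factors. The only difference is cosmetic: you package the remainder via $\delta\in[0,1)$, while the paper bounds $(l+n_0)\log c_1-l\log c_m$ directly.
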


\begin{proof}
Let $(k_1,\dots,k_m)\in\mathcal{P}_{n,l}^{(\eps)}$. Put $k_1':=k_1+l+n_0$, $k_i':=k_i$ for $i=2,\dots,m-1$, and $k_m':=k_m-l$. Then $\sum_{i=1}^m k_i'=n+n_0$, and $k_m'\geq 0$. Moreover,
\begin{align*}
\sum_{i=1}^m k_i'\gamma_i&=\sum_{i=1}^m k_i\gamma_i+(l+n_0)\gamma_1-l\gamma_m\\
&=\sum_{i=1}^m k_i\gamma_i+Kl+(l+n_0)(\log\lambda_1-\alpha\log c_1)-l(\log\lambda_m-\alpha\log c_m)\\
&\qquad\qquad\qquad -l\left\{\left(\frac{\log c_m}{\log c_1}\right)\log\lambda_1-\log\lambda_m\right\}\\
&=\sum_{i=1}^m k_i\gamma_i+Kl+\{(l+n_0)\log c_1-l\log c_m\}\left(\frac{\log\lambda_1}{\log c_1}-\alpha\right)\\
&\geq -n\eps-|\gamma_1|\geq -2n\eps\geq -2(n+n_0)\eps
\end{align*}
for all sufficiently large $n$, where the first inequality follows since $(k_1,\dots,k_m)\in\mathcal{P}_{n,l}^{(\eps)}$ and $0\leq (l+n_0)\log c_1-l\log c_m\leq -\log c_1$, by definition of $n_0$. Thus, $(k_1',\dots,k_m')\in \mathcal{P}_{n+n_0,0}^{(2\eps)}$ for all large enough $n$. Furthermore, by the definition of $\mathcal{P}_{n,l}^{(\eps)}$, $n\geq k_1+k_m\geq k_1+l$, and so $k_1\leq n-l$. Hence,
\begin{equation*}
\frac{(n-l)!}{k_1!\cdots k_{m-1}!(k_m-l)!}\leq \frac{(n+n_0)!}{(k_1+l+n_0)!k_2!\cdots k_{m-1}!(k_m-l)!}=\frac{(n+n_0)!}{k_1'!\cdots k_m'!},
\end{equation*}
and
\begin{equation*}
\prod_{i=1}^m c_i^{k_i s}=\left(\prod_{i=1}^m c_i^{k_i's}\right)c_m^{ls} c_1^{-(l+n_0)s}\leq \prod_{i=1}^m c_i^{k_i's},
\end{equation*}
again by definition of $n_0$. Thus, the desired inequality follows.
\end{proof}

\begin{lemma} \label{lem:technical2}
Suppose $c_1<c_m$. Then for every $s>0$, every $\eps>0$, every sufficiently large $n$ and every $l\in\{0,1,\dots,n\}$,
\begin{align*}
\max&\left\{\frac{(n-l)!}{k_1!\cdots k_{m-1}!(k_m-l)!}\prod_{i=1}^m c_i^{k_i s}: (k_1,\dots,k_m)\in\mathcal{P}_{n,l}^{(\eps)}\right\}\\
&\qquad \leq \max\left\{\frac{(n-n_0)!}{k_1!\cdots k_m!}\prod_{i=1}^m c_i^{k_i s}: (k_1,\dots,k_m)\in\mathcal{P}_{n-n_0,0}^{(\delta)}\right\},
\end{align*}
where $n_0\leq l$ is the smallest integer such that $c_m^l\leq c_1^{l-n_0}$, and $\delta:=4\eps\log c_1/\log c_m$.
\end{lemma}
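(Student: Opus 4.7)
The plan is to mirror the proof of Lemma \ref{lem:technical1} but with a decrement rather than an increment in the total count. Given $(k_1,\dots,k_m)\in\mathcal{P}_{n,l}^{(\eps)}$, I would define $k_1':=k_1+l-n_0$, $k_i':=k_i$ for $2\leq i\leq m-1$, and $k_m':=k_m-l$, so that $\sum_i k_i'=n-n_0$ and $k_m'\geq 0$. The goal is to show (a) $(k_1',\dots,k_m')\in\mathcal{P}_{n-n_0,0}^{(\delta)}$, and (b) the multinomial-times-$c$-product on the left of the claimed inequality, evaluated at $(k_1,\dots,k_m)$, is bounded by the corresponding expression on the right, evaluated at $(k_1',\dots,k_m')$. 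The lemma then follows by taking the maximum over the left-hand partitions.

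For (a), the same algebraic identity as in the proof of Lemma \ref{lem:technical1} (using $K=(\log c_m/\log c_1)\log\lambda_1-\log\lambda_m$) yields
\[
\sum_{i=1}^m k_i'\gamma_i \;=\; \sum_{i=1}^m k_i\gamma_i + Kl + \bigl[(l-n_0)\log c_1-l\log c_m\bigr]\left(\frac{\log\lambda_1}{\log c_1}-\alpha\right).
\]
From the defining property of $n_0$ together with its minimality, $0\leq(l-n_0)\log c_1-l\log c_m<-\log c_1$, so the last term is bounded in absolute value by $|\gamma_1|$. Combined with the hypothesis $\sum_i k_i\gamma_i+Kl\geq -n\eps$, this gives $\sum_i k_i'\gamma_i\geq -n\eps-|\gamma_1|\geq -2n\eps$ for all sufficiently large $n$, uniformly in $l$ (since $|\gamma_1|$ is a constant independent of $n$ and $l$).

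The crux, and the main obstacle, is then to turn this into $\sum_i k_i'\gamma_i\geq -(n-n_0)\delta$, despite the fact that $n-n_0$ may be strictly smaller than $n$. Since $n_0\leq l(1-\log c_m/\log c_1)+1\leq n(1-\log c_m/\log c_1)+1$, one obtains $n-n_0\geq n(\log c_m/\log c_1)-1$. Because $c_1<c_m<1$ forces $\log c_1/\log c_m>1$, the factor $4$ in $\delta=4\eps\log c_1/\log c_m$ leaves ample room to absorb both the shrinkage of $(n-n_0)/n$ toward $\log c_m/\log c_1$ and the $|\gamma_1|$ error term, for all large $n$.

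For (b), after cancelling the common factors $k_i!$ ($2\leq i\leq m-1$) and $(k_m-l)!$, the ratio of the two multinomial coefficients reduces to $\prod_{j=1}^{l-n_0}(k_1+j)/(n-l+j)$, which is at most $1$ because $k_1\leq n-l$ (from $k_m\geq l$ and $\sum_i k_i=n$). Meanwhile, the ratio of the $c$-products is $(c_m^l/c_1^{l-n_0})^s\leq 1$ directly by the defining inequality of $n_0$. Multiplying the two bounds yields the desired inequality term-by-term across the map $(k_1,\dots,k_m)\mapsto(k_1',\dots,k_m')$, which suffices for the asserted bound on the maxima.
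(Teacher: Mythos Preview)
Your proposal is correct and follows essentially the same approach as the paper's proof: the same substitution $k_1'=k_1+l-n_0$, $k_m'=k_m-l$, the same algebraic identity for $\sum_i k_i'\gamma_i$, the same lower bound $n-n_0\geq n(\log c_m/\log c_1)-1$ to convert $-2n\eps$ into $-(n-n_0)\delta$, and the same two inequalities (multinomial and $c$-product) for part (b). Your treatment of the multinomial ratio as $\prod_{j=1}^{l-n_0}(k_1+j)/(n-l+j)\leq 1$ is slightly more explicit than the paper's, but the argument is identical in substance.
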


\begin{proof}
Let $(k_1,\dots,k_m)\in\mathcal{P}_{n,l}^{(\eps)}$. Put $k_1':=k_1+l-n_0$, $k_i':=k_i$ for $i=2,\dots,m-1$, and $k_m':=k_m-l$. Then $\sum_{i=1}^m k_i'=n-n_0$, and $k_m'\geq 0$. Moreover, as in the proof of Lemma \ref{lem:technical1},
\begin{align*}
\sum_{i=1}^m k_i'\gamma_i&=\sum_{i=1}^m k_i\gamma_i+(l-n_0)\gamma_1-l\gamma_m\\
&=\sum_{i=1}^m k_i\gamma_i+Kl+\{(l-n_0)\log c_1-l\log c_m\}\left(\frac{\log\lambda_1}{\log c_1}-\alpha\right)\\
&\geq -n\eps-|\gamma_1|\geq -2n\eps
\end{align*}
for all sufficiently large $n$. Now the definition of $n_0$ implies $(l-n_0+1)\log c_1<l\log c_m$, so
\begin{equation*}
n_0-1<l\left(1-\frac{\log c_m}{\log c_1}\right)\leq n\left(1-\frac{\log c_m}{\log c_1}\right),
\end{equation*}
and hence,
\begin{equation*}
n-n_0\geq \left(\frac{\log c_m}{\log c_1}\right)n-1\geq \left(\frac{\log c_m}{2\log c_1}\right)n
\end{equation*}
for all large enough $n$, keeping in mind that $0<\log c_m/\log c_1<1$. Thus, $2n\eps\leq (n-n_0)\delta$, and it follows that $(k_1',\dots,k_m')\in\mathcal{P}_{n-n_0,0}^{(\delta)}$ for all sufficiently large $n$. Furthermore (since $n_0\leq l$),
\begin{equation*}
\frac{(n-l)!}{k_1!\cdots k_{m-1}!(k_m-l)!}\leq \frac{(n-n_0)!}{(k_1+l-n_0)!k_2!\cdots k_{m-1}!(k_m-l)!}=\frac{(n-n_0)!}{k_1'!\cdots k_m'!},
\end{equation*}
and
\begin{equation*}
\prod_{i=1}^m c_i^{k_i s}=\left(\prod_{i=1}^m c_i^{k_i's}\right)c_m^{ls} c_1^{-(l-n_0)s}\leq \prod_{i=1}^m c_i^{k_i's},
\end{equation*}
again by definition of $n_0$. The desired inequality follows.
\end{proof}

\begin{lemma} \label{lem:multinomial-estimate}
Let $n\in\NN$, and let $k_1,\dots,k_m$ be nonnegative integers with $\sum_{i=1}^m k_i=n$. Put $p_i:=k_i/n$ for $i=1,\dots,m$. Then
\begin{equation*}
\frac{n!}{k_1!\cdots k_m!}\leq 2\sqrt{n}\left(\prod_{i=1}^m p_i^{-p_i}\right)^n,
\end{equation*}
where we use the convention $0^0\equiv 1$.
\end{lemma}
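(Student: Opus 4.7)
The plan is a direct application of Stirling's formula. Let $J:=\{i:k_i>0\}$. Since $0!=1$ and by the convention $0^0=1$ the indices outside $J$ contribute $1$ on both sides, I may drop them and reduce to proving the inequality with the products taken only over $J$. If $|J|=1$, then the left-hand side equals $1$ and (since $n\geq 1$) the right-hand side is at least $2\sqrt{n}\geq 2$, so the claim is trivial. Assume therefore $|J|\geq 2$ and $k_i\geq 1$ for all $i\in J$.

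Next I would apply the two-sided Stirling bounds
\begin{equation*}
\sqrt{2\pi k}\left(\frac{k}{e}\right)^k\leq k!\leq \sqrt{2\pi k}\left(\frac{k}{e}\right)^k e^{1/(12k)}, \qquad k\geq 1.
\end{equation*}
Using the upper bound on $n!$ and the lower bound on each $k_i!$, and using $\sum_{i\in J}k_i=n$ to cancel the $e^{-n}$ factors, gives
\begin{equation*}
\frac{n!}{\prod_{i\in J}k_i!}\leq \frac{e^{1/(12n)}\sqrt{n}}{(2\pi)^{(|J|-1)/2}\sqrt{\prod_{i\in J}k_i}}\cdot\frac{n^n}{\prod_{i\in J}k_i^{k_i}}.
\end{equation*}

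Then I would simplify the right-hand ratio via $k_i^{k_i}=(np_i)^{np_i}=n^{np_i}p_i^{np_i}$, so that
\begin{equation*}
\prod_{i\in J}k_i^{k_i}=n^n\prod_{i\in J}p_i^{np_i}, \qquad\text{hence}\qquad \frac{n^n}{\prod_{i\in J}k_i^{k_i}}=\left(\prod_{i\in J}p_i^{-p_i}\right)^n.
\end{equation*}
Finally I would bound the prefactor: since $|J|\geq 2$ and $\prod_{i\in J}k_i\geq 1$,
\begin{equation*}
\frac{e^{1/(12n)}}{(2\pi)^{(|J|-1)/2}\sqrt{\prod_{i\in J}k_i}}\leq \frac{e^{1/12}}{\sqrt{2\pi}}<2,
\end{equation*}
which completes the proof. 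The only real step is Stirling's formula; the lemma is essentially Shannon's standard entropy estimate for multinomial coefficients with a concrete constant, so I foresee no obstacle beyond tracking the numerical constant carefully.
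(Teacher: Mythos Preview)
Your proof is correct and takes essentially the same approach as the paper's: both reduce to the indices with $k_i>0$, apply two-sided Stirling bounds to $n!$ and each $k_i!$, and then simplify $n^n/\prod k_i^{k_i}$ to $(\prod p_i^{-p_i})^n$. The only cosmetic differences are that the paper uses the cruder Stirling bounds $\sqrt{2\pi}\,n^{n+1/2}e^{-n}\le n!\le e\,n^{n+1/2}e^{-n}$ (so the constant $2$ comes from $e/\sqrt{2\pi}\le 2$) and does not single out the case $|J|=1$.
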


\begin{proof}
We use the following precise version of Stirling's approximation:
\begin{equation}
\sqrt{2\pi}n^{n+\frac12}e^{-n}\leq n!\leq en^{n+\frac12}e^{-n}, \qquad\mbox{for all $n\in\NN$}.
\label{eq:Stirling}
\end{equation}
Let $\mathcal{E}_+:=\{i:k_i>0\}$. Using that $e/\sqrt{2\pi}\leq 2$, we obtain from \eqref{eq:Stirling}
\begin{align*}
\frac{n!}{k_1!\cdots k_{m}!}&=\frac{n!}{\prod_{i\in\mathcal{E}_+}k_i!} 
	\leq \frac{2n^{n+\frac12}e^{-n}}{\prod_{i\in\mathcal{E}_+}k_i^{k_i+\frac12}e^{-k_i}}
  \leq 2\sqrt{n}\frac{n^n}{\prod_{i\in\mathcal{E}_+}k_i^{k_i}}\\
&=2\sqrt{n}\left(\prod_{i\in\mathcal{E}_+}p_i^{-p_i}\right)^{n}=2\sqrt{n}\left(\prod_{i=1}^{m}p_i^{-p_i}\right)^{n},
\end{align*}
as required.
\end{proof}

Next, let $\mathbf{c}^*=(c_1^*,\dots,c_m^*)$ be the vector defined by $c_i^*=0$ for $i\in\II_0$, and $c_i^*=c_i^{\hat{s}}$ for $i\in\II_+$. Then $\mathbf{c}^*\in\Delta_m^0$, and
\begin{equation*}
\max\{H(\mathbf{p}): \mathbf{p}\in\Delta_m^0\}=H(\mathbf{c}^*)=\hat{s}.
\end{equation*}
Note also that, if $\alpha<\hat{\alpha}$, then
\begin{equation*}
\sum_{i\in\II_+}c_i^*\gamma_i=\sum_{i\in\II_+}c_i^*(\log\lambda_i-\alpha\log c_i)
<\sum_{i\in\II_+}c_i^{\hat{s}}(\log\lambda_i-\hat{\alpha}\log c_i)=0,
\end{equation*}
so the half space $\{\mathbf{p}:\sum p_i\gamma_i\geq 0\}$ does not contain the point $\mathbf{c}^*$. For $\delta>0$, define the subsimplex
\begin{equation*}
\Gamma_{\delta}:=\Delta_m^0\cap\left\{\mathbf{p}:\sum p_i\gamma_i\geq -\delta\right\}.
\end{equation*}

\begin{lemma} \label{lem:subsimplex-maximum}
Let $\delta>0$ and suppose $\mathbf{c}^*\not\in\{\mathbf{p}:\sum p_i\gamma_i\geq -\delta\}$.
Then the maximum value of $H$ over $\Gamma_{\delta}$ is attained on the hyperplane $\{\mathbf{p}:\sum p_i\gamma_i=-\delta\}$.
\end{lemma}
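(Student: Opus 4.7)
The plan is to deduce the claim from the strict quasi-concavity of $H$ on $\Delta_m^0$. First I would write $H=E/L$, where $E(\mathbf{p})=-\sum_i p_i\log p_i$ is the Shannon entropy and $L(\mathbf{p})=-\sum_i p_i\log c_i$ is linear and strictly positive on $\Delta_m$. The super-level set $\{H\geq h\}$ coincides with $\{E-hL\geq 0\}$; since $E$ is strictly concave on the closed simplex (the convention $0\log 0=0$ makes $x\mapsto -x\log x$ strictly concave on the full half-line $[0,\infty)$) while $L$ is linear, the function $E-hL$ is strictly concave for every $h$, so each super-level set of $H$ is strictly convex. This yields strict quasi-concavity of $H$ on $\Delta_m^0$: for distinct $\mathbf{p},\mathbf{q}\in\Delta_m^0$ and $t\in(0,1)$,
\begin{equation*}
H\bigl((1-t)\mathbf{p}+t\mathbf{q}\bigr) > \min\{H(\mathbf{p}),H(\mathbf{q})\}.
\end{equation*}
In particular $\mathbf{c}^*$ is the unique global maximizer of $H$ on $\Delta_m^0$.

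Next, by compactness of $\Gamma_\delta$ and continuity of $H$, I would fix a maximizer $\mathbf{p}^*$ of $H$ on $\Gamma_\delta$. Since by hypothesis $\mathbf{c}^*\notin\Gamma_\delta$, we have $\mathbf{p}^*\neq\mathbf{c}^*$ and $H(\mathbf{p}^*)<\hat{s}$. Suppose, aiming at a contradiction, that $\sum_i p_i^*\gamma_i>-\delta$, so that $\mathbf{p}^*$ lies strictly inside the half-space defining $\Gamma_\delta$. Along the segment $\mathbf{p}(t)=(1-t)\mathbf{p}^*+t\mathbf{c}^*$, the linear functional $\sum_i p_i(t)\gamma_i$ depends continuously on $t$ and equals $\sum_i p_i^*\gamma_i>-\delta$ at $t=0$, so $\mathbf{p}(t)\in\Gamma_\delta$ for all sufficiently small $t>0$. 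But strict quasi-concavity then gives $H(\mathbf{p}(t))>\min\{H(\mathbf{p}^*),H(\mathbf{c}^*)\}=H(\mathbf{p}^*)$, contradicting the maximality of $\mathbf{p}^*$. Therefore $\sum_i p_i^*\gamma_i=-\delta$, which is the claim.

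The main subtlety I anticipate is establishing strict quasi-concavity on the entire closed simplex $\Delta_m^0$, and not merely on its relative interior, because a priori the maximizer $\mathbf{p}^*$ could have some coordinates equal to zero and the segment from $\mathbf{p}^*$ toward $\mathbf{c}^*$ could run along the boundary. This reduces to the elementary but easy-to-overlook fact that $x\mapsto -x\log x$ is strictly concave on $[0,\infty)$, not only on $(0,\infty)$; once this is in hand, $E$ is strictly concave on the whole simplex, the super-level set argument goes through, and the rest of the proof is routine.
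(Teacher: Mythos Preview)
Your argument is correct and takes a genuinely different route from the paper. The paper argues via critical points: it asserts (by Lagrange multipliers) that $\mathbf{c}^*$ is the unique critical point of $H$ in $\Delta_m^0$, so the maximum over $\Gamma_\delta$ must lie on its relative boundary, and then rules out the faces of $\Delta_m^0$ by noting that $H$ has infinite gradient there. You instead establish strict quasi-concavity of $H$ on the \emph{closed} simplex via the decomposition $H=E/L$ and the strict concavity of $x\mapsto -x\log x$ on $[0,\infty)$, and then perturb a putative interior maximizer toward $\mathbf{c}^*$. Your approach is cleaner in that the boundary faces of $\Delta_m^0$ require no separate treatment---strict quasi-concavity on the closed simplex handles them automatically, exactly as you anticipate in your final paragraph. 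The paper's approach, by contrast, leaves the ``infinite gradient'' step somewhat informal. Both methods ultimately rely on the same underlying fact (the uniqueness of $\mathbf{c}^*$ as a maximizer), but your convexity packaging makes the logic more self-contained.
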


\begin{proof}
Using Lagrange multipliers one can show that $H$ has only one critical point in $\Delta_m^0$, which is the absolute maximum at $\mathbf{c}^*$. Since this point lies outside $\Gamma_{\delta}$, the maximum of $H$ over $\Gamma_{\delta}$ is attained on the boundary of $\Gamma_{\delta}$, where we use the relative topology of the $(\#\II_+-1)$-dimensional hyperplane containing $\Delta_m^0$. But it is easy to see that the restriction of $H$ to $\Delta_m^0$ cannot have a local maximum on the boundary of $\Delta_m^0$, because $H$ has infinite gradient there. Hence, the maximum value of $H$ on $\Gamma_{\delta}$ is attained on the set $\{\mathbf{p}:\sum p_i\gamma_i=-\delta\}$.
\end{proof}

\begin{proposition} \label{prop:Holder-halves}
Let $\alpha_{\min}\leq\alpha\leq\alpha_{\max}$.
\begin{enumerate}[(i)]
\item If $\alpha\leq \hat{\alpha}$, then $\dim_H\{t:\tilde{\alpha}_f(t)\leq\alpha\}=\beta^*(\alpha)$;
\item If $\alpha\geq \hat{\alpha}$, then $\dim_H\{t:\tilde{\alpha}_f(t)\geq\alpha\}=\beta^*(\alpha)$.
\end{enumerate}
\end{proposition}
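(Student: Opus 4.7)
The lower bounds in (i) and (ii) are immediate from Lemma \ref{lem:lower-bound}: since $\tilde{E}_f(\alpha)$ is contained in both $\{t:\tilde{\alpha}_f(t)\leq\alpha\}$ and $\{t:\tilde{\alpha}_f(t)\geq\alpha\}$, each of those sets has Hausdorff dimension at least $\beta^*(\alpha)$. The task is therefore to establish the matching upper bounds, and for this I would combine elementary covering arguments with the combinatorial and analytic preparations assembled in Lemmas \ref{lem:technical1}--\ref{lem:subsimplex-maximum} and Proposition \ref{prop:duality}.

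Fix $s>\beta^*(\alpha)$ and a small $\eps>0$, and write $\gamma_i:=\log\lambda_i-\alpha\log c_i$. Theorem \ref{thm:individual-Holder-exponents}(ii), together with the strict monotonicity of $\vartheta$ noted in the proof of Lemma \ref{lem:existence-of-alphas}, converts the inequality $\tilde{\alpha}_f(t)\leq\alpha$ into the statement that $\sum_{i=1}^m k_i(n;t)\gamma_i+K\chi_n(t)L_n(t)\geq -n\eps$ for infinitely many $n$. Up to the countable set $\mathcal{T}_0$ this yields, for every $N_0$, the cover
\[
\{t:\tilde{\alpha}_f(t)\leq\alpha\}\subseteq \bigcup_{n\geq N_0}\bigcup_{A\in\mathcal{A}_n^{(\eps)}}I_A,
\]
where $\mathcal{A}_n^{(\eps)}$ collects the level-$n$ codings $A=(i_1,\dots,i_n)$ satisfying that inequality. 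This is admissible for $\mathcal{H}^s_\delta$ as soon as $c_{\max}^{N_0}\leq\delta$, so it suffices to show that $\Sigma_n:=\sum_{A\in\mathcal{A}_n^{(\eps)}}|I_A|^s$ decays geometrically in $n$. I would split $\Sigma_n$ according to the trailing $m$-run length $l=L_n(A)$ and the value of $\chi_n(A)\in\{0,1\}$, and bound the number of codings of length $n$ with prescribed partition $(k_1,\dots,k_m)$ satisfying $k_m\geq l$ by $\binom{n-l}{k_1,\dots,k_{m-1},k_m-l}$. The defining condition forces the partition to lie in $\mathcal{P}_{n,l}^{(\eps)}$ (the $\chi_n=0$ case being absorbed into $l=0$). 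Since there are only $O(n^{m-1})$ such partitions, it is enough to dominate, for each $l$, the maximum of $\binom{n-l}{k_1,\dots,k_{m-1},k_m-l}\prod_{i=1}^m c_i^{k_i s}$ over $\mathcal{P}_{n,l}^{(\eps)}$.

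Lemmas \ref{lem:technical1} and \ref{lem:technical2} are designed precisely to remove the parameter $l$: in either case $c_1\geq c_m$ or $c_1<c_m$ they absorb each such inner maximum into a standard multinomial maximum $\binom{n'}{k_1,\dots,k_m}\prod c_i^{k_i s}$ over $\mathcal{P}_{n',0}^{(\eps')}$ with $n'=n\pm n_0(l)$ of order $n$ and $\eps'$ a fixed multiple of $\eps$. The Stirling bound of Lemma \ref{lem:multinomial-estimate} then turns this into $(\text{poly})\cdot\exp\{n'(\sum p_i\log c_i)(s-H(\mathbf{p}))\}$, with $\mathbf{p}=(k_i/n')$ constrained by $\mathbf{p}\in\Gamma_{\eps'}:=\Delta_m^0\cap\{\sum p_i\gamma_i\geq -\eps'\}$. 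The hypothesis $\alpha\leq\hat{\alpha}$ gives $\sum c_i^*\gamma_i\leq 0$; for $\alpha<\hat{\alpha}$ and $\eps$ small, $\mathbf{c}^*\notin\Gamma_{\eps'}$, and Lemma \ref{lem:subsimplex-maximum} places the constrained maximum of $H$ on the face $\{\sum p_i\gamma_i=-\eps'\}$. By continuity of this constrained max in $\eps'$ together with Proposition \ref{prop:duality}, it converges to $\beta^*(\alpha)$ as $\eps\to 0$. Choosing $\eps$ small enough that this maximum is strictly less than $s$ makes the exponent strictly negative, so $\Sigma_n\leq C(\eps,s)\eta^n$ for some $\eta<1$, and therefore $\sum_{n\geq N_0}\Sigma_n\to 0$ as $N_0\to\infty$, giving $\mathcal{H}^s=0$. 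The boundary case $\alpha=\hat{\alpha}$ follows from continuity of $\beta^*$. Case (ii) is symmetric: $\tilde{\alpha}_f(t)\geq\alpha$ gives $\sum k_i\gamma_i+K\chi_n L_n\leq n\eps$ for all large $n$ (hence for some $n\geq N_0$), and one repeats the estimate with $\Gamma_{\eps'}$ replaced by $\Delta_m^0\cap\{\sum p_i\gamma_i\leq\eps'\}$; the assumption $\alpha\geq\hat{\alpha}$ again places $\mathbf{c}^*$ outside this set when $\alpha>\hat{\alpha}$, and Lemma \ref{lem:subsimplex-maximum} applies with the same proof.

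The principal obstacle is the correction term $K\chi_n L_n$: the trailing-$m$-run length $l$ can range all the way up to $n$, so a naive accounting leaves one with a one-parameter family of extremal problems to control. The shift-by-$n_0$ trick in Lemmas \ref{lem:technical1} and \ref{lem:technical2} is what compresses this family into a single standard extremal problem on $\mathcal{P}_{n',0}^{(\eps')}$, after which the Stirling/duality machinery applies cleanly.
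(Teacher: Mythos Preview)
Your argument for part (i) is essentially the paper's proof: the same cover by level-$n$ cylinders, the same split by the trailing run length $l=L_n(t)$, the same use of Lemmas \ref{lem:technical1}/\ref{lem:technical2} to collapse $l$, Stirling via Lemma \ref{lem:multinomial-estimate}, and Lemma \ref{lem:subsimplex-maximum} to locate the constrained maximum of $H$. Two small points differ. First, for the boundary case $\alpha=\hat\alpha$ the paper does not argue by continuity (which would not work directly, since at $\alpha=\hat\alpha$ one has $\mathbf{c}^*\in\Gamma_\delta$ for every $\delta>0$ and Lemma \ref{lem:subsimplex-maximum} no longer applies); instead it simply observes $F(\hat\alpha)\subset\KK_\phi$ and $\dim_H\KK_\phi=\hat s=\beta^*(\hat\alpha)$. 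Second, the paper briefly notes the (much easier) sub-case $\lambda_m=0$, where $K=0$ and the correction term disappears.

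For part (ii) your ``symmetric'' sketch is more than is needed and, as written, leans on reversed versions of Lemmas \ref{lem:technical1}/\ref{lem:technical2} that have not been stated or proved. The paper avoids this entirely by exploiting the inequality $\tilde\alpha_f(t)\le\alpha_0(t)$ from Theorem \ref{thm:individual-Holder-exponents}: the set $\{t:\tilde\alpha_f(t)\ge\alpha\}$ is contained in $\{t:\alpha_0(t)\ge\alpha\}$, whose defining condition involves no correction term $K\chi_nL_n$, so the covering reduces immediately to the standard multinomial estimate over $\{\mathbf p:\sum p_i\gamma_i\le\eps\}$. (Equivalently, in your framework, the constraint $\sum k_i\gamma_i+K\chi_nL_n\le n\eps$ together with $K\chi_nL_n\ge 0$ already gives $\sum k_i\gamma_i\le n\eps$, so no analogue of Lemmas \ref{lem:technical1}/\ref{lem:technical2} is required.) The paper also remarks that (ii) can alternatively be read off from \cite{LiWuXiong}.
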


\begin{proof}
By Lemma \ref{lem:lower-bound}, $\beta^*(\alpha)$ is a lower bound for the Hausdorff dimension of each of the above two sets, since they both contain $\tilde{E}_f(\alpha)$. The proof that $\beta^*(\alpha)$ is also an upper bound is more involved.

Suppose $\alpha\leq \hat{\alpha}$, and consider the set $F(\alpha):=\{t:\tilde{\alpha}_f(t)\leq\alpha\}$. Note that $F(\alpha)\subset \KK_\phi$. If $\alpha=\hat{\alpha}$, then $\beta^*(\alpha)=\hat{s}=\dim_H\KK_\phi\geq \dim_H F(\alpha)$. Assume therefore that $\alpha<\hat{\alpha}$. Without loss of generality, we may also assume \eqref{eq:ratio-assumption}. 

Assume first that $\lambda_m>0$. For $\delta>0$, define
\begin{equation}
\beta_\delta^*(\alpha):=\max\bigg\{H(\mathbf{p}): \mathbf{p}=(p_1,\dots,p_{m})\in\Delta_m^0,\ \sum_{i\in\mathcal{I_+}}p_i\gamma_i=-\delta\bigg\}.
\label{eq:beta-delta}
\end{equation}
Note that $\beta_\delta^*(\alpha)\to\beta^*(\alpha)$ as $\delta\downarrow 0$.

Given $\eps>0$, let $\delta(\eps):=\max\{2\eps,4\eps\log c_1/\log c_m\}$. Fix $s>\beta^*(\alpha)$, and choose $\eps>0$ small enough so that in fact, $s>\beta_{\delta(\eps)}^*(\alpha)$ and $\mathbf{c}^*\not\in\{\mathbf{p}:\sum p_i\gamma_i\geq -\delta(\eps)\}$. (This is possible by the discussion preceding Lemma \ref{lem:subsimplex-maximum}.) Set $\delta:=\delta(\eps)$. 

For $(k_1,\dots,k_m)\in\mathcal{P}_{n,0}^{(\delta)}$, put $p_i:=k_i/n$ for $i=1,\dots,m$, and note that $\mathbf{p}=(p_1,\dots,p_m)\in\Gamma_{\delta}$, since the condition $\sum k_i\gamma_i\geq -n\delta$ forces those $k_i$'s with $\lambda_i=0$ to be zero. By Lemma \ref{lem:multinomial-estimate},
\begin{equation}
\frac{n!}{k_1!\cdots k_m!}\prod_{i=1}^m c_i^{k_i s}\leq 2\sqrt{n}\left(\prod_{i=1}^m p_i^{-p_i}c_i^{p_i s}\right)^n,
\label{eq:combination-estimate}
\end{equation}
and the key is that the product inside the $n$th power is less than 1:
\begin{equation*}
\mathbf{p}\in\Gamma_\delta \quad \Rightarrow \quad \prod_{i=1}^m p_i^{-p_i}c_i^{p_i s}<\prod_{i=1}^m p_i^{-p_i}c_i^{p_i \beta_\delta^*(\alpha)}\leq \prod_{i=1}^m p_i^{-p_i}c_i^{p_i H(\mathbf{p})}=1.
\end{equation*}
Since $\Gamma_\delta$ is compact, it follows that
\begin{equation}
\max\left\{\prod_{i=1}^m p_i^{-p_i}c_i^{p_i s}: \mathbf{p}\in\Gamma_\delta\right\}<1.
\label{eq:product-less-than-one}
\end{equation}

Now fix $N\in\NN$. If $t\in F(\alpha)\backslash \mathcal{T}_0$, then there is in integer $n\geq N$ such that 
\begin{equation*}
\sum_{i=1}^m k_i(n;t)\gamma_i+KL_n(t)\geq -n\eps,
\end{equation*}
by Theorem \ref{thm:individual-Holder-exponents}, so the $m$-tuple $(k_1,\dots,k_m):=(k_1(n;t),\dots,k_m(n;t))$ lies in $\mathcal{P}_{n,l}^{(\eps)}$, where $l=L_n(t)$. There are
\begin{equation*}
\frac{(n-l)!}{k_1!\cdots k_{m-1}!(k_m-l)!}
\end{equation*}
basic intervals of level $n$ whose points satisfy $L_n(t)\geq l$ and $k_i(n;t)=k_i$ for $i=1,\dots,m$, and each has length $\prod_{i=1}^m c_i^{k_i}$. Let $\HH^s$ denote $s$-dimensional Hausdorff measure. Then (since $\mathcal{T}_0$ is countable),
\begin{equation*}
\HH^s(F(\alpha))\leq \sum_{n=N}^\infty \sum_{l=0}^n \sum\left\{\frac{(n-l)!}{k_1!\cdots k_{m-1}!(k_m-l)!}\prod_{i=1}^m c_i^{k_i s}: (k_1,\dots,k_m)\in \mathcal{P}_{n,l}^{(\eps)}\right\}.
\end{equation*}
We now consider two cases.

\bigskip
{\em Case 1.} Suppose $c_1\geq c_m$. For each $n\geq N$ and $l\leq n$, let $n_0=n_0(l)$ be as in Lemma \ref{lem:technical1}. Then
\begin{equation*}
n_0(l)\leq l\left(\frac{\log c_m}{\log c_1}-1\right)\leq n\left(\frac{\log c_m}{\log c_1}-1\right)=:K_1 n,
\end{equation*}
where $K_1\geq 0$. Since $\Gamma_\eps\subset \Gamma_{\eps'}$ for $\eps<\eps'$ and $\#\mathcal{P}_{n,l}^{(\eps)}\leq (n+1)^m$, Lemma \ref{lem:technical1}, \eqref{eq:combination-estimate} and \eqref{eq:product-less-than-one} give
\begin{align*}
\HH^s(F(\alpha)) &\leq \sum_{n=N}^\infty (n+1)^m\sum_{l=0}^n \max\left\{\frac{(n+n_0)!}{k_1!\cdots k_m!}\prod_{i=1}^m c_i^{k_i s}: (k_1,\dots,k_m)\in\mathcal{P}_{n+n_0,0}^{(2\eps)}\right\}\\
&\leq \sum_{n=N}^\infty (n+1)^m\sum_{l=0}^n \max\left\{2\sqrt{n+n_0(l)}\left(\prod_{i=1}^m p_i^{-p_i}c_i^{p_i s}\right)^{n+n_0(l)}: \mathbf{p}\in\Gamma_{2\eps}\right\}\\
&\leq 2\sum_{n=N}^\infty (n+1)^{m+1}\sqrt{(1+K_1)n} \left(\max\left\{\prod_{i=1}^m p_i^{-p_i}c_i^{p_i s}: \mathbf{p}\in\Gamma_{\delta}\right\}\right)^n\\
&\to 0 \qquad\mbox{as $N\to\infty$}.
\end{align*}

\bigskip
{\em Case 2.} Suppose $c_1<c_m$. For each $n\geq N$ and $l\leq n$, let $n_0=n_0(l)$ be as in Lemma \ref{lem:technical2}. Then
\begin{equation*}
n-n_0(l)\geq \left(\frac{\log c_m}{2\log c_1}\right)n=:K_2 n
\end{equation*}
for all large enough $n$, so Lemma \ref{lem:technical2}, \eqref{eq:combination-estimate} and \eqref{eq:product-less-than-one} give
\begin{align*}
\HH^s(F(\alpha))&\leq \sum_{n=N}^\infty (n+1)^m\sum_{l=0}^n \max\left\{2\sqrt{n-n_0(l)}\left(\prod_{i=1}^m p_i^{-p_i}c_i^{p_i s}\right)^{n-n_0(l)}: \mathbf{p}\in\Gamma_{\delta}\right\}\\
&\leq 2\sum_{n=N}^\infty (n+1)^{m+1}\sqrt{n} \left(\max\left\{\prod_{i=1}^m p_i^{-p_i}c_i^{p_i s}: \mathbf{p}\in\Gamma_{\delta}\right\}\right)^{K_2 n}\\
&\to 0 \qquad\mbox{as $N\to\infty$}.
\end{align*}
This concludes the proof that $\dim_H F(\alpha)\leq\beta^*(\alpha)$ in case $\lambda_m>0$.

When $\lambda_m=0$ the proof is much simpler, since $K=0$ and so the term in \eqref{eq:general-Holder-limsup-equation} involving $L_n(t)$ vanishes, leading to a much more straightforward covering argument.

Likewise, the proof of (ii) is much easier, because $\tilde{\alpha}_f(t)\leq \alpha_0(t)$, where $\alpha_0(t)$ is defined as in Theorem \ref{thm:individual-Holder-exponents}, and so we have $\{t:\tilde{\alpha}_f(t)\geq\alpha\}\subset \{t:\alpha_0(t)\geq\alpha\}$. This last set is straightforward to cover in a manner similar to the above.

Alternatively, (ii) may be deduced quickly from the main result of \cite{LiWuXiong}.
\end{proof}

\begin{proof}[Proof of Theorem \ref{thm:Holder-spectrum}]
(i) Theorems \ref{thm:individual-Holder-exponents} and \ref{thm:pointwise-Holder-at-endpoints} imply that $\alpha_{\min}\leq\tilde{\alpha}_f(t)\leq\alpha_{\max}$ or $\tilde{\alpha}_f(t)=\infty$ for every $t$.

(ii) If $\mathcal{I}_0=\emptyset$, then $\tilde{\alpha}_f(t)<\infty$ for every $t$ by Theorems \ref{thm:individual-Holder-exponents} and \ref{thm:pointwise-Holder-at-endpoints}. If $\mathcal{I}_0\neq\emptyset$, then the standard coding of almost every $t\in(0,1)$ has at least one digit in $\mathcal{I}_0$, and so $\tilde{\alpha}_f(t)=\infty$ almost everywhere.

(iii) Fix $\alpha\in(\alpha_{\min},\alpha_{\max})$. By Lemma \ref{lem:lower-bound}, $\dim_H \tilde{E}_f(\alpha)\geq \beta^*(\alpha)$. The reverse inequality follows from Proposition \ref{prop:Holder-halves}: If $\alpha\leq\hat{\alpha}$, use that $\tilde{E}_f(\alpha)\subset \{t:\tilde{\alpha}_f(t)\leq\alpha\}$; and if $\alpha>\hat{\alpha}$, use that $\tilde{E}_f(\alpha)\subset \{t:\tilde{\alpha}_f(t)\geq\alpha\}$.

(iv) When $\alpha=\alpha_{\min}$, the constraint in \eqref{eq:duality} holds if and only if $p_i=0$ for each $i$ with $\rho_i=\log\lambda_i/\log c_i>\alpha_{\min}$. The constrained maximum is then attained when $p_i=c_i^{s_{\min}}$ for each $i$ with $\rho_i=\alpha_{\min}$. But then $\dim_H \tilde{E}_f(\alpha_{\min})=H(p_1,\dots,p_m)=s_{\min}$, using Proposition \ref{prop:duality}. The dimension of $\tilde{E}_f(\alpha_{\max})$ follows similarly.

(v) Since $\tilde{\alpha}_f(t)<\infty$ only for $t\in \KK_\phi$, it follows that $\dim_H \tilde{E}_f(\alpha)\leq \dim_H \KK_\phi=\hat{s}$ for all $\alpha\in[\alpha_{\min},\alpha_{\max}]$. The bound is attained for $\alpha=\hat{\alpha}$, as we can take $(p_1,\dots,p_{m})$ in \eqref{eq:duality} to be the $m$-tuple given by $p_i=0$ for $i\in\mathcal{I}_0$, and $p_i=c_i^{\hat{s}}$ for $i\in\mathcal{I}_+$. Moreover, if $\mathcal{I}_0=\emptyset$, then $\tilde{\alpha}_f(t)=\hat{\alpha}$ for all $t$ such that $d_i(t)=c_i$ for each $i$, in view of Corollary \ref{cor:holder-exponent-densities}; hence, $\tilde{\alpha}_f(t)=\hat{\alpha}$ for almost all $t$.
\end{proof}

\begin{proof}[Proof of Theorem \ref{thm:Hausdorff-dimension}]
Assume throughout that $\lambda_i<c_i$ for at least one $i$.

(i) Suppose $\sum_{i=1}^{m}c_i\log(\lambda_i/c_i)\geq 0$. Then in particular, $\lambda_i>0$ for each $i$, and
\begin{equation*}
\hat{\alpha}=\frac{\sum_{i=1}^m c_i\log\lambda_i}{\sum_{i=1}^m c_i\log c_i}\leq 1.
\end{equation*}
Moreover, $\lambda_j>c_j$ for at least one $j$, so $\alpha_{\min}<1<\alpha_{\max}$. Since
\begin{equation*}
\{t:\tilde{\alpha}_f(t)\geq 1+\eps\}\subset\mathcal{D}(f)\subset\{t:\tilde{\alpha}_f(t)\geq 1\} \quad\mbox{for every $\eps>0$},
\end{equation*}
Proposition \ref{prop:Holder-halves}(ii) and the continuity of $\beta^*(\alpha)$ imply $\dim_H \mathcal{D}(f)=\beta^*(1)>0$.

(ii) Suppose $\sum_{i\in\II_+}c_i^{\hat{s}}\log(\lambda_i/c_i)\geq 0$. Then $\hat{\alpha}\leq 1$, so $\tilde{E}_f(\hat{\alpha}) \subset \{t:\tilde{\alpha}_f(t)\leq 1\} \subset \KK_\phi$. Thus, $\dim_H \{t:\tilde{\alpha}_f(t)\leq 1\}=\hat{s}$. By a straightforward continuity argument, $\dim_H \{t:\tilde{\alpha}_f(t)\leq 1-\eps\}\to \dim_H \{t:\tilde{\alpha}_f(t)\leq 1\}$ as $\eps\downarrow 0$. Since
\begin{equation}
\{t:\tilde{\alpha}_f(t)\leq 1-\eps\}\subset\mathcal{D}_\sim(f)\subset\{t:\tilde{\alpha}_f(t)\leq 1\} \quad\mbox{for every $\eps>0$},
\label{eq:D-sandwich}
\end{equation}
we conclude that $\dim_H \mathcal{D}_\sim(f)=\hat{s}>0$.

(iii) Suppose $\sum_{i\in\II_+}c_i^{\hat{s}}\log(\lambda_i/c_i)<0$. Then as in (i) above, $\alpha_{\min}<1<\alpha_{\max}$, but now $\hat{\alpha}>1$. 
Using \eqref{eq:D-sandwich}, Proposition \ref{prop:Holder-halves}(i) implies $\dim_H \mathcal{D}_\sim(f)=\beta^*(1)>0$.
\end{proof}




\section{Multifractal formalism for self-similar measures} \label{sec:multifractal}

In this section, consider a self-similar measure $\mu$ on $[0,1]$ defined as in Example \ref{ex:self-similar-measure}. We are interested in the upper and lower local dimension of $\mu$ at a point $t\in(0,1)$, defined by
\begin{equation*}
\overline{\alpha}_\mu(t):=\limsup_{r\to 0}\frac{\log \mu(B(t,r))}{\log r}, \qquad 
\underline{\alpha}_\mu(t):=\liminf_{r\to 0}\frac{\log \mu(B(t,r))}{\log r},
\end{equation*}
where $B(t,r):=(t-r,t+r)$. If $\overline{\alpha}_\mu(t)=\underline{\alpha}_\mu(t)$, we denote the common value by $\alpha_\mu(t)$.
Let
\begin{gather*}
\overline{E}_\mu(\alpha):=\{t:\overline{\alpha}_\mu(t)=\alpha\}, \\
\underline{E}_\mu(\alpha):=\{t:\underline{\alpha}_\mu(t)=\alpha\}, \\
E_\mu(\alpha):=\{t:\alpha_\mu(t)=\alpha\}.
\end{gather*}
It has been known for some time (see \cite{Arbeiter}) that
\begin{equation*}
\dim_H E_\mu(\alpha)=\beta^*(\alpha):=\inf_{q\in\RR}\{\alpha q+\beta(q)\},
\end{equation*}
where $\beta(q)$ is the unique number satisfying $\sum_{j=1}^k \pi_j^q r_j^{\beta(q)}=1$. The question is, whether in the above equation we can replace $E_\mu$ with $\overline{E}_\mu$ or $\underline{E}_\mu$. The author could not find an answer to this question in the literature, but in any case, it follows immediately from our results that the answer is affirmative for $\underline{E}_\mu$:

\begin{corollary}
With the notation and assumptions of Example \ref{ex:self-similar-measure}, put 
\begin{equation*}
\alpha_{\min}:=\min_{1\leq j\leq k}\frac{\log\pi_j}{\log r_j}, \qquad \alpha_{\max}:=\max_{1\leq j\leq k}\frac{\log\pi_j}{\log r_j}. 
\end{equation*}
Then $\dim_H \underline{E}_\mu(\alpha)=\beta^*(\alpha)$ if $\alpha\in[\alpha_{\min},\alpha_{\max}]$, and $\underline{E}_\mu(\alpha)=\emptyset$ otherwise.
\end{corollary}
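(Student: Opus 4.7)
The plan is to reduce the corollary to the pointwise H\"older spectrum result (Theorem \ref{thm:Holder-spectrum}) by working with the distribution function $f(t):=\mu([0,t])$, which fits the framework of Example \ref{ex:self-similar-measure}. Under that identification, the parameters $\alpha_{\min}$, $\alpha_{\max}$ and $\beta(q)$ of the corollary coincide with their counterparts in the paper, since the ratios $\log\pi_j/\log r_j$ ($j=1,\dots,k$) are precisely the $\rho_i$ for $i\in\II_+$, and the scaling equation $\sum_j\pi_j^q r_j^{\beta(q)}=1$ is the same as \eqref{eq:scaling-equation}. The proof will then hinge on the pointwise identity
\begin{equation*}
\underline{\alpha}_\mu(t) \;=\; \tilde{\alpha}_f(t) \qquad \text{for every }t\in(0,1).
\end{equation*}

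To establish this identity I would use that $\mu$ is non-atomic (because each $r_j<1$) to write
\begin{equation*}
\mu(B(t,r)) \;=\; \bigl[f(t+r)-f(t)\bigr] + \bigl[f(t)-f(t-r)\bigr],
\end{equation*}
with both summands nonnegative. Hence $\mu(B(t,r))$ is comparable, within a factor of two, to $\max\{|f(t+r)-f(t)|,|f(t)-f(t-r)|\}$. Taking logarithms and dividing by the negative quantity $\log r$ turns $\max$ into $\min$, and passing to $\liminf_{r\to 0^+}$ via the elementary equality $\liminf\min\{g_1,g_2\}=\min\{\liminf g_1,\liminf g_2\}$ gives
\begin{equation*}
\underline{\alpha}_\mu(t) \;=\; \min\!\Bigl\{\liminf_{r\to 0^+}\tfrac{\log|f(t+r)-f(t)|}{\log r},\ \liminf_{r\to 0^+}\tfrac{\log|f(t-r)-f(t)|}{\log r}\Bigr\},
\end{equation*}
which is exactly $\liminf_{h\to 0}\log|f(t+h)-f(t)|/\log|h|=\tilde{\alpha}_f(t)$. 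For $t\notin\mathrm{supp}(\mu)$ both sides equal $+\infty$ under the convention $\log 0=-\infty$, so the identity is universal.

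With $\underline{E}_\mu(\alpha)=\tilde{E}_f(\alpha)$ for every finite $\alpha>0$, Theorem \ref{thm:Holder-spectrum}(i) yields $\underline{E}_\mu(\alpha)=\emptyset$ whenever $\alpha\notin[\alpha_{\min},\alpha_{\max}]$, and parts (iii)--(iv) yield $\dim_H\underline{E}_\mu(\alpha)=\beta^*(\alpha)$ on the interior and at the endpoints, once one recognizes that $\beta^*(\alpha_{\min})=s_{\min}$ and $\beta^*(\alpha_{\max})=s_{\max}$. The latter follows quickly from Proposition \ref{prop:duality}: at $\alpha=\alpha_{\min}$, writing $\gamma_i=(\log c_i)(\rho_i-\alpha_{\min})$ shows $\gamma_i\leq 0$ for $i\in\II_+$, so the constraint $\sum p_i\gamma_i=0$ forces $p_i=0$ for all $i$ with $\rho_i>\alpha_{\min}$, reducing the entropy maximization to the similarity-dimension equation defining $s_{\min}$. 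There is no serious obstacle; the only delicate points are the $\liminf$--$\min$ interchange and the bookkeeping between the parametrizations of Example \ref{ex:self-similar-measure} and Section \ref{sec:multifractal}.
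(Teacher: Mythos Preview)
Your proposal is correct and follows essentially the same route as the paper: identify $\underline{\alpha}_\mu(t)$ with $\tilde{\alpha}_f(t)$ via the sandwich $|f(t+h)-f(t)|\leq \mu(B(t,h))\leq |f(t+h)-f(t)|+|f(t-h)-f(t)|$, and then invoke Theorem~\ref{thm:Holder-spectrum}. The only cosmetic difference is that you work with $\liminf$ of log-ratios whereas the paper phrases the same computation as $\sup\{\alpha:\limsup \mu(B(t,r))/r^\alpha=0\}$; you are also a bit more explicit than the paper in spelling out the endpoint identifications $\beta^*(\alpha_{\min})=s_{\min}$ and $\beta^*(\alpha_{\max})=s_{\max}$.
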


\begin{proof}
It is a routine exercise to show that, if $\alpha=\underline{\alpha}_\mu(t)$, then for every $\eps>0$,
\begin{equation*}
\limsup_{r\downarrow 0}\frac{\mu(B(t,r))}{r^{\alpha+\eps}}=\infty\qquad\mbox{and}\qquad \limsup_{r\downarrow 0}\frac{\mu(B(t,r))}{r^{\alpha-\eps}}=0.
\end{equation*}
Thus,
\begin{equation*}
\underline{\alpha}_\mu(t)=\sup\left\{\alpha>0: \limsup_{r\downarrow 0}\frac{\mu(B(t,r))}{r^{\alpha}}=0\right\}.
\end{equation*}
Let $f(t):=\mu([0,t])$. Then $f$ is of the form \eqref{eq:our-self-affine-functions-intro}. Since $|f(t+h)-f(t)|\leq \mu(B(t,h))\leq |f(t+h)-f(t)|+|f(t-h)-f(t)|$, we see that $\underline{\alpha}_\mu(t)=\tilde{\alpha}_f(t)$. The result now follows immediately from Theorem \ref{thm:Holder-spectrum}.
\end{proof}

In fact, Theorems \ref{thm:individual-Holder-exponents} and \ref{thm:pointwise-Holder-at-endpoints} give the precise value of $\underline{\alpha}_\mu(t)$ at each point $t$.

Unfortunately, the results and analysis of this paper have no direct implications for the Hausdorff dimension of $\overline{E}_\mu(\alpha)$. (However, from the main result of \cite{LiWuXiong} it follows that $\dim_H \overline{E}_\mu(\alpha)=\beta^*(\alpha)$ for all $\alpha\leq\hat{\alpha}$.)
It also should be noted that we obtain $\dim_H \underline{E}_\mu(\alpha)=\beta^*(\alpha)$ only for self-similar measures in $\RR$, and our method has no obvious generalization to higher dimensions.

\section{Monofractal functions and time subordinators} \label{sec:CMT}

Return now to the general function $f$ of the form \eqref{eq:our-self-affine-functions-intro}, with arbitrary $m$, $d$, contraction ratios $c_1,\dots,c_m$ and $\lambda_1,\dots,\lambda_m$, and signature $\boldsymbol{\eps}$. Seuret \cite{Seuret} gave conditions under which a continuous function can be written as the composition of a monofractal function and an increasing function, or time subordinator. We show here, as a consequence of our main results, that for $f$ of the form \eqref{eq:our-self-affine-functions-intro} such a decomposition is always possible. 

Let $s\geq 1$ be the solution of $\sum_{i=1}^m \lambda_i^s=1$. There is a unique monotone increasing function $g:[0,1]\to[0,1]$ of the form \eqref{eq:our-self-affine-functions-intro} satisfying $g(0)=0$, $g(1)=1$, with $m(g)=m$, $c_i(g)=c_i$ and $\lambda_i(g)=\lambda_i^s$ for $i=1,\dots,m$, and $\boldsymbol{\eps}(g)=\boldsymbol{\eps}$.

 
It is a direct consequence of Theorems \ref{thm:individual-Holder-exponents} and \ref{thm:pointwise-Holder-at-endpoints} that
\begin{equation*}
\tilde{\alpha}_g(t)=s\tilde{\alpha}_f(t) \qquad\mbox{for all $t\in(0,1)$},
\end{equation*}
so the nondirectional H\"older spectrum of $g$ is just a horizontal scaling of that of $f$: $\dim_H \tilde{E}_{g}(\alpha)=\dim_H \tilde{E}_f(\alpha/s)$. When $c_1=\dots=c_m=1/m$ and $\boldsymbol{\eps}=(0,0,\dots,0)$, we may replace $\tilde{\alpha}$ with $\alpha$ and $\tilde{E}$ with $E$ in the above. 

Now define $h:[0,1]\to\RR^d$ by $h:=f\circ g^{-1}$, where $g^{-1}(y):=\inf\{t\in[0,1]:g(t)\geq y\}$ for $0\leq y\leq 1$. Since $f$ and $g$ are continuous, $f=h\circ g$. Furthermore, $h$ is also of the form \eqref{eq:our-self-affine-functions-intro}, with $m(h)=\#\II_+$, $c_i(h)=\lambda_{j(i)}^s$ and $\lambda_i(h)=\lambda_{j(i)}$, where $j(i)$ is the index of the the $i$th nonzero entry of the vector $(\lambda_1,\dots,\lambda_m)$. It follows immediately from Theorem \ref{thm:individual-Holder-exponents} that $h$ is monofractal with constant H\"older exponent $\alpha_h(t)=1/s$ for all $t\in(0,1)$. 

Since $g$ is monotone increasing, it can be viewed as a time subordinator for the function $f$. For example, in the case of the P\'olya curve we have $s=2$, and the subordinator $g$ is the Riesz-Nagy function with parameter $a=\sin^2\theta$. The function $h$ in this case is a reparametrization of the P\'olya curve which fills equal areas in equal time, and has constant H\"older exponent $1/2$. Similarly, the time subordinator for Okamoto's function with parameter $a>1/2$ (i.e. the non-monotone case) is the Okamoto function with parameter $a^s$, where $s$ is the unique root of $2a^s+(2a-1)^s=1$. (This example was given by Seuret.) Finally, observe that the time subordinator $g$ is always a singular function, in view of Proposition \ref{prop:zero-or-none}.

\section*{Acknowledgment}
The author wishes to thank Prof. Lars Olsen for bringing the paper \cite{LiWuXiong} to his attention, Prof. Zolt\'an Buczolich for a helpful discussion about pointwise H\"older exponents, and the referee for his or her careful reading of the paper and for suggesting several improvements to the presentation.

\footnotesize


\begin{thebibliography}{23}

\bibitem{Allaart}
{\sc P. C. Allaart}, The infinite derivatives of Okamoto's self-affine functions: an application of $\beta$-expansions. {\em J. Fractal Geom.} {\bf 3} (2016), no. 1, 1--31.

\bibitem{Allaart2} 
{\sc P. C. Allaart}, Differentiability of a two-parameter family of self-affine functions. {\em J. Math. Anal. Appl.} {\bf 450} (2017), no. 2, 954--968.

\bibitem{Arbeiter}
{\sc M. Arbeiter} and {\sc N. Patzschke}, Random self-similar multifractals. {\em Math. Nachr.} {\bf 181} (1996), 5--42.

\bibitem{BKK}
{\sc B. B\'ar\'any, G. Kiss} and {\sc I. Kolossv\'ary}, Pointwise regularity of parameterized affine zipper fractal curves. Preprint, arXiv:1608.04558.

\bibitem{BSS}
{\sc L. Barreira, B. Saussol} and {\sc J. Schmeling}, Distribution of frequencies of digits via multifractal analysis. {\em J. Number Theory} {\bf 97} (2002), 410--438.

\bibitem{Bedford}
{\sc T. Bedford}, H\"older exponents and box dimension for self-affine fractal functions. {\em Constr. Approx.} {\bf 5} (1989), 33--48.

\bibitem{BenSlimane}
{\sc M. Ben Slimane}, Multifractal formalism for selfsimilar functions expanded in singular basis. {\em Appl. Comput. Harmon. Anal.} {\bf 11} (2001), 387--419.




\bibitem{Bumby}
{\sc R. T. Bumby}, The differentiability of P\'olya's function. {\em Adv. Math.} {\bf 18} (1975), 243--244.

\bibitem{Eggleston}
{\sc H. Eggleston}, The fractional dimension of a set defined by decimal properties. {\em Quart. J. Math. Oxford Ser.} {\bf 20} (1949), 31--36.

\bibitem{Falconer}
{\sc K.~J.~Falconer}, {\it Fractal Geometry. Mathematical Foundations and Applications}, 2nd Edition, Wiley (2003)

\bibitem{Frisch}
{\sc U. Frisch} and {\sc G. Parisi}, Fully developed turbulence and intermittency, in Proc. Enrico Fermi, International Summer School in Physics, North Holland, Amsterdam (1985), 84-88.

\bibitem{Jaffard1}
{\sc S. Jaffard}, Multifractal formalism for functions part I: results valid for all functions. {\em SIAM J. Math. Anal.} {\bf 28} (1997), no. 4, 944--970.

\bibitem{Jaffard2}
{\sc S. Jaffard}, Multifractal formalism for functions part II: self-similar functions.  {\em SIAM J. Math. Anal.} {\bf 28} (1997), no. 4, 971--998.

\bibitem{JafMan}
{\sc S. Jaffard} and {\sc B. B. Mandelbrot}, Local regularity of nonsmooth wavelet expansions and application to the Polya function. {\em Adv. Math} {\bf 120} (1996), 265--282.

\bibitem{Katsuura} 
{\sc H. Katsuura}, Continuous nowhere-differentiable functions - an application of contraction mappings. {\em Amer. Math. Monthly} {\bf 98} (1991), no. 5, 411--416.


\bibitem{Kobayashi} 
{\sc K. Kobayashi}, On the critical case of Okamoto's continuous non-differentiable functions. {\em Proc. Japan Acad. Ser. A Math. Sci.} {\bf 85} (2009), no. 8, 101--104.

\bibitem{Kobayashi-Z} 
{\sc Z. Kobayashi}, Digital sum problems for the Gray code representation of natural numbers. {\em Interdiscip. Inform. Sci.} {\bf 8} (2002), 167--175.

\bibitem{Lax}
{\sc P. D. Lax}, The differentiability of P\'olya's function. {\em Adv. Math.} {\bf 10} (1973), 456--464.

\bibitem{Levy}
{\sc P. L\'evy}, Les courbes planes ou gauches et les surfaces compos\'ees de parties semblables au tout. {\em J. Ecole Polytechn.} (1938), 227-247, 249-291.

\bibitem{LiWuXiong}
{\sc J. Li}, {\sc M. Wu} and {\sc Y. Xiong}, Hausdorff dimensions of the divergence points of self-similar measures with the
open set condition. {\em Nonlinearity} {\bf 25} (2012), 93--105.

\bibitem{Li-Dekking}
{\sc W. Li} and {\sc F. M. Dekking}, Hausdorff dimension of subsets of Moran fractals with prescribed group frequency of their codings. {\em Nonlinearity} {\bf 16} (2003), 187--199.

\bibitem{Mandelbrot}
{\sc B. Mandelbrot}, Intermittent turbulence in self-similar cascades: Divergence of high moments
and dimension of the carrier. {\em J. Fluid Mech.} {\bf 62} (1974), 331.

\bibitem{Okamoto} 
{\sc H. Okamoto}, A remark on continuous, nowhere differentiable functions. {\em Proc. Japan Acad. Ser. A Math. Sci.} {\bf 81} (2005), no. 3, 47--50.

\bibitem{Perkins} 
{\sc F. W. Perkins}, An elementary example of a continuous non-differentiable function. {\em Amer. Math. Monthly} {\bf 34} (1927), 476--478.

\bibitem{Polya}
{\sc G. P\'olya}, \"Uber eine Peanosche Kurve. {\em Bull. Acad. Sci. Cracovie} {A} (1913), 305-313.

\bibitem{deRham}
{\sc G. de Rham}, Sur quelques courbes definies par des \'equations fonctionnelles. {\em Univ. e Politec. Torino. Rend. Sem. Mat.} {\bf 16} (1957), 101--113.

\bibitem{Riesz-Nagy}
{\sc F. Riesz} and {\sc B. Sz.-Nagy}, {\em Functional Analysis}, Ungar, New York, 1955.

\bibitem{Sagan}
{\sc H. Sagan}, {\em Space-filling curves}, Springer-Verlag, New York, 1994.

\bibitem{Salem}
{\sc R. Salem}, On some singular monotonic functions which are strictly increasing. {\em Trans. Amer. Math. Soc.} {\bf 53} (1943), 427--439.

\bibitem{Seuret}
{\sc S. Seuret}, On multifractality and time subordination for continuous functions. {\em Adv. Math.} {\bf 220} (2009), no. 3, 936--963.

\end{thebibliography}
\end{document}